\newtheorem{theorem}{Theorem}[section]
\newtheorem{proposition}[theorem]{Proposition}
\newtheorem{corollary}[theorem]{Corollary}
\newtheorem{lemma}[theorem]{Lemma}
\theoremstyle{definition}
\newtheorem{definition}[theorem]{Definition}
\theoremstyle{remark}
\newtheorem{remark}[theorem]{Remark}
\newcommand{\thmref}[1]{Theorem~\ref{#1}}
\newcommand{\propref}[1]{Proposition~\ref{#1}}
\newcommand{\secref}[1]{\S\ref{#1}}
\newcommand{\lemref}[1]{Lemma~\ref{#1}}
\newcommand{\corref}[1]{Corollary~\ref{#1}}
\newcommand{\figref}[1]{Fig.~\ref{#1}}
\newcommand{\remref}[1]{Remark~\ref{#1}}
\newcommand{\eqnref}[1]{Equation~\eqref{#1}}
\newcommand{\defref}[1]{Definition~\eqref{#1}}
\newcommand{\calB}{{\mathcal B}}
\newcommand{\calC}{{\mathcal C}}
\newcommand{\calE}{{\mathcal E}}
\newcommand{\calN}{{\mathcal N}}
\newcommand{\calP}{{\mathcal P}}
\newcommand{\calS}{{\mathcal S}}
\newcommand{\calT}{{\mathcal T}}
\newcommand{\T}{{\mathcal T}}
\newcommand{\calU}{{\mathcal U}}
\newcommand{\calV}{{\mathcal V}}
\newcommand{\calW}{{\mathcal W}}
\newcommand{\calX}{{\mathcal X}}
\newcommand{\calY}{{\mathcal Y}}
\renewcommand{\P}{{\mathbb P}}
\newcommand{\N}{{\mathbb N}}
\newcommand{\C}{{\mathbb C}}
\renewcommand{\H}{{\mathbb H}}
\newcommand{\R}{{\mathbb R}}
\renewcommand{\O}{{\mathbb O}}
\newcommand{\from}{\colon \thinspace}
\newcommand{\ST}{{\, \, \Big| \, \,}}
\newcommand{\st}{{\, \big| \,}}
\renewcommand{\d}{{\rm d}}
\newcommand{\emul}{\stackrel{{}_\ast}{\asymp}}
\newcommand{\gmul}{\stackrel{{}_\ast}{\succ}}
\newcommand{\lmul}{\stackrel{{}_\ast}{\prec}}
\newcommand{\eadd}{\stackrel{{}_+}{\asymp}}
\newcommand{\gadd}{\stackrel{{}_+}{\succ}}
\newcommand{\ladd}{\stackrel{{}_+}{\prec}}
\newcommand{\ep}{\epsilon}
\newcommand{\Teich}{Teich\-m\"u\-ller\ }
\newcommand{\Ham}{Ham\-en\-st\"adt\ }
\newcommand{\param}{{\mathchoice{\mkern1mu\mbox{\raise2.2pt\hbox{$
\centerdot$}}
\mkern1mu}{\mkern1mu\mbox{\raise2.2pt\hbox{$\centerdot$}}\mkern1mu}{
\mkern1.5mu\centerdot\mkern1.5mu}{\mkern1.5mu\centerdot\mkern1.5mu}}}
\DeclareMathOperator{\Log}{Log}
\DeclareMathOperator{\Mod}{Mod}
\DeclareMathOperator{\QI}{QI}
\DeclareMathOperator{\Isom}{Isom}
\DeclareMathOperator{\Ext}{Ext}
\DeclareMathOperator{\diam}{diam}
\DeclareMathOperator{\I}{i}
\DeclareMathOperator{\Cone}{Cone}
\newcommand{\genus}{{\sf g}}
\newcommand{\puncture}{{\sf p}}
\newcommand{\component}{{\sf c}}
\newcommand{\Bers}{{\mathsf B}}
\newcommand{\error}{{\mathsf D}}
\newcommand{\distance}{{\mathsf d}}
\newcommand{\dpinch}{{\mathsf d}_{\rm pinch}} 
\newcommand{\cb}{{\mathsf M}}
\newcommand{\thresh}{{\mathsf T}}
\newcommand{\mul}{{\mathsf K}}
\newcommand{\add}{{\mathsf C}}
\newcommand{\bp}{ {\mathbf{p}} }
\newcommand{\bx}{ {\mathbf{x}} }
\newcommand{\by}{ {\mathbf{y}} }
\newcommand{\bz}{ {\mathbf{z}} }
\newcommand{\bw}{ {\mathbf{w}} }
\newcommand{\bu}{ {\mathbf{u}} }
\newcommand{\Tt}{{\T_{\rm thick}}}
\newcommand{\fs}{f^\star}
\newcommand{\fsC}{{f^\star_\calC}}
\newcommand{\fsP}{{f^\star_\calP}}
 \newcommand{\Length}{{\mathsf L}}
 \newcommand{\Distance}{{\mathsf D}}
 \newcommand{\Dt}{{\mathsf D}_{\rm thick}}
\newcommand{\olim}{ {\text{$\omega$-$\lim$ }} }
\newcommand{\LR}{{\calT_{LR}}}
\newcommand{\balpha}{{\boldsymbol \alpha}}
\newcommand{\bbeta}{{\boldsymbol \beta}}
\newcommand{\bgamma}{{\boldsymbol \gamma}}
\newcommand{\f}{{\overline f}}
\newcommand{\bg}{{\overline g}}
\renewcommand{\MR}{{\calT_{MR}}}
\newcommand{\MC}{{\mathcal T_{\mathcal MC}}}
\begin{document}

\title{Rigidity of Teichm\"uller Space}

  \author[A.~Eskin]{Alex Eskin}
  \address{Dept. of Mathematics\\ 
  University of Chicago\\
  Chicago, Illinois, 60637}
   \email{\tt eskin@math.uchicago.edu}

  \author[H.~Masur]{Howard Masur}
  \address{Dept. of Mathematics\\ 
  University of Chicago\\
  Chicago, Illinois, 60637}
   \email{\tt masur@math.uchicago.edu}
  
  \author[K.~Rafi]{Kasra Rafi}
  \address{Dept. of Mathematics\\ 
  University of Toronto\\
  Toronto, Ontario, Canada M5S 2E4}
  \email{\tt rafi@math.toronto.edu}
  

\begin{abstract}
We prove that the every quasi-isometry of \Teich space equipped with the \Teich
metric is a bounded distance from an isometry of \Teich space. That is, 
\Teich space is quasi-isometrically rigid. 
\end{abstract}
\maketitle

\section{Introduction and Statement of the theorem}
In this paper we continue our study of the coarse geometry of \Teich space 
begun in \cite{rafi:CD}. Our goal, as part of Gromov's broad program to understand 
spaces and groups  by their coarse or quasi-isometric  geometry, is to carry this 
out in the context of \Teich space equipped with the \Teich metric. To state the 
main theorem, let $S$ be a connected surface of finite hyperbolic type.  
Define the complexity of $S$ to be
\[
\xi(S) =3\genus(S) + \puncture(S) -3,
\]
where $\genus(S)$ is the genus of $S$ and $\puncture(S)$ is the number 
of punctures. Let $\T(S)$ denote the  \Teich space of $S$ equipped with the 
\Teich metric $d_{\T(S)}$.

\begin{theorem} \label{Thm:Main}
Assume $\xi(S) \geq 2$. Then, for every $\mul_S, \add_S >0$ there is a constant 
$\error_S >0$ such that if 
\[f_S \from \T(S) \to \T(S)\] 
is a  $(\mul_S, \add_S)$--quasi-isometry then there is an isometry 
\[ \Psi_S \from \T(S) \to \T(S)\] 
such that, for $x \in \calT(S)$
\[
d_{\T(S)} \big( f(x), \Psi(x) \big) \leq \Distance_S. 
\]
\end{theorem}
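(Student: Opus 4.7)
The plan is to follow the by-now-standard template for quasi-isometric rigidity theorems: extract combinatorial data from the coarse geometry of $\calT(S)$, invoke a combinatorial rigidity theorem to produce an element $\phi$ of the extended mapping class group, and then use the theorem of Royden (extended by Earle--Kra) that $\Isom(\calT(S))$ is exactly the extended mapping class group to obtain an actual isometry $\Psi_S$ realizing the induced combinatorial map. The remaining task is to verify that $f_S$ and $\Psi_S$ differ by a bounded amount.

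The first and most delicate step is to show that $f_S$ coarsely respects the thin-part stratification of $\calT(S)$. Using the combinatorial/product model for Teichm\"uller space developed in the earlier paper in this program, each multicurve $\alpha$ determines a region $\thin_{\epsilon}(\alpha)$ which coarsely decomposes as a product of one-ended horoball factors (one per component of $\alpha$) and a thick Teichm\"uller factor for the complementary subsurface, with an error that is purely additive. These regions can be characterized intrinsically in terms of coarse features that any $(\mul_S,\add_S)$--quasi-isometry must preserve, such as the existence of quasi-isometrically embedded coarse product regions of a given rank and factor-type. Consequently $f_S$ induces a bijection on the set of multicurves of $S$ that respects inclusion, and restricting to simple closed curves one obtains a simplicial self-map of the curve complex $\calC(S)$.

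Because $\xi(S)\geq 2$, Ivanov's theorem (with the low-complexity extensions of Korkmaz and Luo) says that every such automorphism of $\calC(S)$ is induced by an element $\phi$ of the extended mapping class group of $S$. Let $\Psi_S\in\Isom(\calT(S))$ be the isometry determined by $\phi$. Replacing $f_S$ by $\Psi_S^{-1}\circ f_S$, we may henceforth assume that the induced combinatorial map is the identity, so that $f_S$ coarsely fixes every thin region $\thin_{\epsilon}(\alpha)$.

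It remains to bound $d_{\calT(S)}(f_S(x),x)$ uniformly in $x$. For $x$ in the thick part one uses that any thick point lies within bounded distance of an intersection of thin regions of enough multicurves to pin down $f_S(x)$ up to an additive error depending only on $(\mul_S,\add_S)$. For $x$ inside a thin region one exploits the coarse product structure: in each horoball factor $f_S$ is a self-quasi-isometry that coarsely fixes the boundary and hence moves points a bounded amount, and in the thick Teichm\"uller factor of the complementary subsurface one induces and argues by induction on $\xi(S)$. The main obstacle throughout is the first step: detecting the multicurve combinatorics from coarse geometry, since $\calT(S)$ is far from hyperbolic, its product regions are only coarsely products, and one must carefully control how they interact along their boundaries.
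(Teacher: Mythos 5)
Your high-level plan (extract combinatorics, invoke Ivanov to get an isometry, reduce to showing the residual quasi-isometry is bounded distance from the identity, finish by induction) is indeed the skeleton of the paper's argument. But the step you dispense with in two sentences is the entire technical content of the paper, and as written it contains a genuine gap.

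You assert that the regions $\thin_\ep(\alpha)$ "can be characterized intrinsically in terms of coarse features that any quasi-isometry must preserve, such as the existence of quasi-isometrically embedded coarse product regions of a given rank and factor-type," and conclude that $f_S$ induces a bijection on multicurves respecting inclusion. This is not established, and it is precisely the hard part. The paper does not prove that $f_S$ preserves the thin region of an arbitrary multicurve. Instead it proves something weaker and more subtle: (i) using \emph{coarse differentiation} (\S3, relying on \cite{rafi:CD} and the Eskin--Fisher--Whyte technology) it shows that points of \emph{maximal rank} are coarsely preserved (Prop.~\ref{Prop:Rank-Preserved}); (ii) it proves a \emph{local splitting theorem} for quasi-isometric embeddings of balls in $\prod\H$ (\S4, Thm.~\ref{Thm:Factor-Preserving}), an asymptotic-cone argument à la Kleiner--Leeb and Eskin--Farb adapted to a ball rather than the whole space; (iii) using these, it shows that \emph{maximal cusps} (the regions $\MC(P,\eta_0)$) are preserved, producing only a bijection $\fsP$ on \emph{pants decompositions}, not on all multicurves; (iv) a delicate combinatorial lemma (Prop.~\ref{Prop:Ivanov}) is then needed to show $\fsP$ actually comes from a simplicial automorphism of $\calC(S)$. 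None of these ingredients appears in your sketch, and "QI's preserve flats of a given rank" cannot substitute for them: the issue is that quasi-flats in $\T(S)$ can wander among many overlapping product regions, and controlling this requires the efficiency/differentiation machinery.

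The inductive step also needs more care than your sketch gives it. Cutting along a short curve leaves you not with a Teichm\"uller space but with a \emph{truncated} one $\T(\Sigma,L)$, and the induced map is a quasi-isometry of that truncated space which is \emph{anchored} (close to the identity on $\partial_L(\Sigma)$); the correct inductive statement is Thm.~\ref{Thm:Induction}, not rigidity of $\T(\Sigma)$ itself. This matters most at the base case $\xi=1$: $\H$ is \emph{not} QI-rigid, so your claim that "in each horoball factor $f_S$ coarsely fixes the boundary and hence moves points a bounded amount" does not follow from general nonsense. The paper's base case (Prop.~\ref{Prop:Base-Case}) uses the anchoring at the horocycle boundaries together with pairs of long geodesics through a point making a definite angle; something of this specificity is required. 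So: right template, but the two places you wave your hands---(a) coarse detection of product/thin structure, and (b) the form and base case of the induction---are exactly where the real work lives, and as stated the proposal does not constitute a proof.
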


Recall that a map $f \from \calX \to \calY$ from a metric space $(\calX, d_\calX)$
to a metric space $(\calY, d_\calY)$ is a $(K, C)$--quasi-isometry if it is $C$--coarsely 
onto and, for $x_1,x_2 \in \calX$,
\begin{equation} \label{Eq:QI}
\frac 1K d_\calX(x_1,x_2) - C \leq d_\calY\big( f(x_1), f(x_2)\big) \leq K d_\calT(x_1,x_2) + C.
\end{equation}
If \eqnref{Eq:QI} holds and the map is not assumed to be onto, then $f$ is called a {\em quasi-isometric embedding}.  One defines, for a metric space $(\calX,d_\calX)$, the group $\QI(\calX)$ as the 
equivalence classes of quasi-isometries from $\calX$ to itself, with two 
quasi-isometries being equivalent if they are a bounded distance apart. 
When the natural homomorphism $\Isom(\calX) \to \QI(\calX)$ is a isomorphism,  
we say $\calX$ is quasi-isometrically rigid. Then the Main Theorem restated is 
that $\T(S)$ is quasi-isometrically rigid.

Note also that, by Royden's Theorem \cite{royden:AT}, $\Isom\big(\T(S)\big)$ is 
essentially the mapping class group (the exceptional cases are the twice-punctured
torus and the closed surface of genus $2$ where the two groups differ by a finite
index). Hence, except for the lower complexity cases, $\QI\big(\T(S)\big)$ is 
isomorphic to the mapping class group. 
This theorem has also been proven by Brian Bowditch \cite{bowditch:RT} by a different method. 

\subsection*{History and related results}
There is a fairly long history that involves the study of the group $\QI(\calX)$ in 
different contexts. Among these, symmetric spaces are the closest to our setting. 

In the case when $\calX$ is $\R^n$, $\H^n$ or $\C\H^n$, the quasi-isometry group 
is complicated and much larger than the isometry group. 
In fact, if a self map of $\H^n$ is a bounded distance from an isometry then it induces 
a conformal map on $S^n$. But, every quasi-conformal homeomorphism from 
$S^n\to S^n$  extends to a quasi-isometry of $\H^n$. 
This, in particular, shows why the condition $\xi(S) \geq 2$ in \thmref{Thm:Main} 
is necessary. When $\xi(S) =1$, $\calT(S)$ is isometric (up to a factor of $2$) to 
the hyperbolic plane $\H$ and, as mentioned above, $\H$ is not rigid. 

Pansu \cite{pansu:MC} proved that other rank one symmetric spaces of non-compact 
type such as quaternionic hyperbolic space $\H\H^n$  and the Cayley plane $\P^2(\O)$ 
are rigid. 
In contrast, higher rank irreducible symmetric spaces are rigid. This was 
proven by Kleiner-Leeb \cite{kleiner:RS} (see also Eskin-Farb 
\cite{eskin:RH} for a different proof). In our setting, when $\xi(S) \geq 2$, $\T(S)$ is analogous
to a higher rank symmetric space (see \cite{rafi:CD} and \cite{bowditch:RT}
for discussion about flats in $\T(S)$) and the curve complex $\calC(S)$, 
which plays a prominent role in \Teich theory is analogous to the Tits boundary 
of symmetric space. 

Continuing the above analogy, the action of mapping class group on \Teich
space is analogous to the action of a non-uniform lattice on a symmetric space. 
Quasi-isometric rigidity was shown for non-uniform lattices in rank $1$ groups other  
than $SL(2,\R)$  \cite{schwartz:QRO} and for some higher rank lattices \cite{schwartz:RD}
by Schwartz and in general by Eskin \cite{eskin:QNL}. 
The quasi-isometric rigidity of the  mapping class group $\Mod(S)$ was shown by 
Behrstock-Kleiner-Minsky-Mosher \cite{minsky:MR} and by \Ham \cite{hamenstadt:QR} 
and later by Bowditch \cite{bowditch:RM}. 
More generally Bowditch in that same paper showed that if $S$ and $S'$ are closed 
surfaces  with $\xi(S)=\xi(S')\geq 4$ and $\phi$ is a  quasi-isometric embedding 
of $\Mod(S)$ in $\Mod(S')$, then $S=S'$ and $\phi$ is bounded distance from an 
isometry.  He also shows quasi-isometric rigidity for Teichm\"uller 
space with the Weil-Petersson metric \cite{bowditch:RW}.

\subsection*{Inductive step}
We prove this theorem inductively. To apply induction, we need to consider 
non-connected surfaces. Let $\Sigma$ be a possibly disconnected surface of finite 
hyperbolic type. We always assume that $\Sigma$ does not have a component that
is a sphere, an annulus, a pair of pants or a torus. We define the complexity 
of $\Sigma$ to be
\[
\xi(\Sigma) =3\genus(\Sigma) + \puncture(\Sigma) -3\component(\Sigma),
\]
where $\component(\Sigma)$ is the number of connected components of $\Sigma$. 
For point $x \in \T(\Sigma)$,  let $P_x$ be the short pants decomposition at $x$
(see \secref{Sec:Short}) and for a curve $\gamma \in P_x$ define 
(see \secref{Sec:Short} for the exact definition) 
\[
\tau_x(\gamma) \simeq \log \frac {1}{\Ext_x(\gamma)}. 
\]
For a constant $L>0$, consider the sets
\[
\T(\Sigma, L) = \Big\{ x \in \T(\Sigma) \ST \text{for every curve $\gamma$}, \quad
\tau_x(\gamma) \leq L \Big\},
\]
and
\[
\partial_L(\Sigma, L) = \Big\{ x \in \T(\Sigma, L) \ST 
\forall\, \gamma \in P_x, \quad \tau_x(\gamma) = L \Big\}. 
\]
Thinking of $L$ as a very large number, we say a quasi-isometry 
\[
f_\Sigma \from \T(\Sigma, L) \to \T(\Sigma, L)
\] 
is $\add_\Sigma$--anchored at infinity if the restriction of $f_\Sigma$ to 
$\partial_L(\Sigma)$ is nearly the identity. That is, for every $x \in \partial_L(\Sigma, L)$, 
\[
d_{\T(\Sigma)} \big( x, f_\Sigma(x) \big) \leq \add_\Sigma. 
\]
Our induction step is the following. 

\begin{theorem} \label{Thm:Induction}
Let $\Sigma$ be a surface of finite hyperbolic type. For every $\mul_\Sigma$ and 
$\add_\Sigma$, there is $\Length_\Sigma$ and $\Distance_\Sigma$ so that, for
$L \geq \Length_\Sigma$, if 
\[
f_\Sigma \from \T(\Sigma, L) \to \T(\Sigma, L) 
\]
is a $(\mul_\Sigma, \add_\Sigma)$--quasi-isometry that is $\add_\Sigma$--anchored at infinity, 
then for every $x \in \T(\Sigma)$ we have
\[
d_{\T(\Sigma)} \big( x, f_\Sigma(x) \big) \leq \Distance_\Sigma.
\]
\end{theorem}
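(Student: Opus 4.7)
We argue by induction on the complexity $\xi(\Sigma)$. The base of the induction is $\xi(\Sigma) = 1$, where $\T(\Sigma)$ is isometric (up to a factor of $2$) to the hyperbolic plane and $\T(\Sigma, L)$ is $\H^2$ with a small horoball around every ``cusp'' removed. In this setting the anchored-at-infinity hypothesis pins $f_\Sigma$ to the identity on every bounding horocycle, and a standard boundary-extension argument --- using that $f_\Sigma$ extends to a quasi-conformal map of $\partial \H^2$ which must agree with the identity on a dense set --- then forces $f_\Sigma$ to lie a bounded distance from the identity globally.

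For the inductive step with $\xi(\Sigma) \geq 2$, fix $x \in \T(\Sigma, L)$. If $\tau_x(\alpha) = L$ for every $\alpha \in P_x$, then $x \in \partial_L(\Sigma, L)$ and the conclusion is immediate from the anchoring hypothesis. Otherwise choose $\gamma \in P_x$ with $\tau_x(\gamma) < L$. Since $\gamma$ belongs to the short pants decomposition, its extremal length at $x$ is bounded above by the Bers constant, so Minsky's product regions theorem applies: in a uniform neighborhood of $x$ the Teichm\"uller metric on $\T(\Sigma, L)$ is coarsely equivalent to the sup-metric on $\H_\gamma \times \T(\Sigma_\gamma, L')$, where $\Sigma_\gamma = \Sigma \setminus \gamma$ has $\xi(\Sigma_\gamma) = \xi(\Sigma) - 1$, the factor $\H_\gamma$ is a truncated hyperbolic plane parametrizing the Fenchel--Nielsen length and twist of $\gamma$, and $L' = L'(L)$ is an appropriate adjusted threshold.

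The quasi-isometry $f_\Sigma$ coarsely preserves this product decomposition --- this follows from the coarse-geometric description of $\T(\Sigma)$ via subsurface projections in \cite{rafi:CD} together with the invariance of such product structures under quasi-isometries --- and hence $f_\Sigma$ restricts, up to bounded error, to quasi-isometries $f_{\Sigma_\gamma} \from \T(\Sigma_\gamma, L') \to \T(\Sigma_\gamma, L')$ and $f_\gamma \from \H_\gamma \to \H_\gamma$. The key step is to verify that each restriction is itself anchored at infinity. Given $y \in \partial_{L'}(\Sigma_\gamma, L')$, extend $y$ to a point $\hat y \in \partial_L(\Sigma, L)$ by adjoining the Fenchel--Nielsen data of $\gamma$ with $\tau = L$; the anchoring of $f_\Sigma$ yields $f_\Sigma(\hat y)$ close to $\hat y$, which under coordinate projection translates to $f_{\Sigma_\gamma}(y)$ being close to $y$, and similarly for $f_\gamma$ on the horocycle boundary of $\H_\gamma$. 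Applying the inductive hypothesis to $f_{\Sigma_\gamma}$ and the base-case analysis to $f_\gamma$ then controls $f_\Sigma(x)$ in each factor, completing the inductive step.

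\textbf{Main obstacle.} The technical heart of the argument is the transfer of the anchored condition from $f_\Sigma$ on $\T(\Sigma, L)$ to its restrictions $f_{\Sigma_\gamma}$ and $f_\gamma$ on the product factors. This requires a quantitative version of the coarse-product invariance under $f_\Sigma$: the slicing and extension maps between $\T(\Sigma, L)$ and $\H_\gamma \times \T(\Sigma_\gamma, L')$ must commute with $f_\Sigma$ up to uniformly bounded error, a fact that ultimately rests on $f_\Sigma$ preserving subsurface coordinates coarsely. A secondary issue is the quantitative propagation of constants: the threshold $\Length_\Sigma$ must be chosen so that the descended threshold $L' = L'(L)$ exceeds $\Length_{\Sigma_\gamma}$, forcing an inductive bound on $\Length_\Sigma$ that grows with $\xi(\Sigma)$.
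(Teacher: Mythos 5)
Your high-level strategy — induct on complexity by cutting along a curve, show the restriction to each product factor is itself an anchored quasi-isometry, then recurse — matches the paper's. But the proposal treats the two hardest steps as routine, and they are in fact the main content of Sections~\ref{Sec:Coarse}--\ref{Sec:Short-Curves}.

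First, the assertion that ``$f_\Sigma$ coarsely preserves this product decomposition'' citing \cite{rafi:CD} and ``invariance of such product structures under quasi-isometries'' is not a known fact; it is what the paper must prove. The product regions for different decompositions $\calU$ overlap and are not canonically attached to $x$, so a quasi-isometry has no a priori reason to respect any particular factoring. The paper establishes it in stages: coarse differentiation shows maximal rank is preserved (\propref{Prop:Rank-Preserved}), a local splitting theorem (\thmref{Thm:Factor-Preserving}) gives a factor bijection $\fs_x$ on small balls, an analytic-continuation argument (\propref{Prop:Analytic-Continuation}) glues these bijections, and only then does one show $\fs_x$ fixes curves via the maximal-cusp analysis, Ivanov's theorem, and \propref{Prop:Short-Curve}. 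None of this is a corollary of subsurface-projection coarse geometry; without it the inductive step has no starting point.

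Second, your choice of $\gamma$ is wrong in two ways. Taking any $\gamma\in P_x$ with $\tau_x(\gamma)<L$ is insufficient: the product regions theorem requires $\gamma$ to be $\ell_0$--short, so for a generic $x$ (e.g., a thick point) no such product structure exists at all. The paper handles thick points separately (\propref{Prop:Thick}, after showing $\fsC$ is the identity) and restricts the cutting argument to points with $d_\T(z,\LR)\geq \mu\eta R$ and $d_\T(z,\partial_L(\Sigma))\geq R$, using $R$--density to extend the bound. Moreover, even among short curves one must cut along the \emph{shortest} curve(s), i.e., the top group of \propref{Prop:Short-Curve}: that is the only case where $\fs_x(\alpha)=\alpha$ is established, and it is exactly what makes the descended map $f_{\Sigma''}$ anchored. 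Complexity-one components also need to be removed first (\propref{Prop:Torus-Identity}), since they sit in the product decomposition differently than annuli. Finally, your base-case argument via quasi-conformal extension to $\partial\H^2$ does not directly apply because $f_\Sigma$ is only defined on the truncated space and anchored on finite horocycles, not at infinity; the paper's base case (\propref{Prop:Base-Case}) instead takes two transversal geodesics through $z$ with endpoints on $\partial_L(\Sigma)$ and uses Morse stability of quasi-geodesics with pinned endpoints, which is both more elementary and actually uses the anchoring as stated.
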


Note that $\Distance_\Sigma$ depends on the topology of $\Sigma$ and constants 
$\mul_\Sigma$, $\add_\Sigma$ and $\Length_\Sigma$, but is independent 
of $L$.

\subsection*{Outline of the proof}
The overall strategy is to take the quasi-isometry and prove it preserves more and 
more of the structure of \Teich space.  Section~\ref{Sec:Background} is devoted to 
establishing notation and background material.  In Section~\ref{Sec:Coarse} we define 
the rank of a point as the number of short curves plus the number of complimentary 
components that are not pairs of pants. A point has maximal rank if all complimentary 
components $W$ have $\xi(W)=1$.  We show in \propref{Prop:Rank-Preserved} 
that a quasi-isometry preserves points with maximal rank.  The proof uses the ideas of 
coarse differentiation, previously developed in the context of \Teich space in \cite{rafi:CD}
which in turn is based on the work of Eskin-Fisher-Whyte \cite{eskin:CDI, eskin:CDII}. 
Coarse differentiation was also previously used by Peng to study  quasi-flats in solvable 
Lie groups in \cite{peng:CDI} and \cite{peng:CDII}.
The important property  of maximal rank is that near such a point of maximal rank, 
\Teich space is close to being isometric to  a product of copies of $\H$, with the 
supremum metric. 

In Section~\ref{Sec:Local} we prove a local version of the splitting theorem 
shown in \cite{kleiner:RS} in the context of symmetric spaces and later in 
\cite{eskin:QF} for products of hyperbolic planes. There it is proved in 
\thmref{Thm:Factor-Preserving}  that a quasi-isometric embedding from a large ball in 
$\prod \H$ to $\prod \H$ can be restricted to a smaller ball where it factors, up to a a 
fixed additive error. This local factoring is applied in Section~\ref{Sec:Local-Factors} 
to give a bijective association $\fs_x$ between factors at $x$ and at $f(x)$. 
We also prove a notion of analytic continuation, namely, we examine when local 
factors around points $x$ and $x'$ overlap, how $\fs_x$ and $\fs_{x'}$ are related. 
  
We use this to  show, Proposition~\ref{Prop:Max-Preserved},  that  maximal cusps 
are preserved by the quasi-isometry. A maximal cusp is the subset of  maximal rank 
consisting of points  where there is a maximal set of short curves all about the 
same length. There \Teich space looks like a cone in a product of horoballs inside 
the product of $\H$.  The  set of maximal cusps is disconnected;  there is 
a component associated to every pants decomposition. Thus, $f$ induces
a bijection on the set of pants decomposition. The next step is then to show  this map 
is induced by automorphism of the curve complex, and hence by Ivanov's Theorem, 
it is associated to an isometry of \Teich space. Composing $f$ by the inverse, we can assume that $f$ sends every component of the set of maximal cusps
to itself. An immediate consequence of this is that $f$ restricted to the  thick part of \Teich space
is a bounded distance away from the identity (\propref{Prop:Thick}). 
From this  fact and again applying Propositions~\ref{Prop:Local-Factors} and 
\ref{Prop:Analytic-Continuation}, we then show, \propref{Prop:Short-Curve},  
that for any point in the maximal rank set, the shortest curves are  preserved
and in fact, for any shortest curve $\alpha$ at points $x$, $\fs_x(\alpha) = \alpha$.  

This allows one to cut along the shorest curve, induce a quasi-isometry on 
\Teich space  of lower complexity and proceed by induction. Most of the discussion
above also applies for the disconnected subsurfaces. Hence, much of 
Sections~\ref{Sec:Local-Factors} and \ref{Sec:Short-Curves} is written in a way
to apply to both $\T(S)$ and $\T(\Sigma, L)$ settings. The induction step is carried out 
in Section~\ref{Sec:Induction}.
 
\subsection*{Acknowledgements} We would like to thank Brian Bowditch for
helpful comments on an earlier version of this paper. 

\section{Background} \label{Sec:Background} 
The purpose of this section is to establish notation and recall some statements
from the literature. We refer the reader to \cite{hubbard:TT} and \cite{farb:MCG} for 
basic background on \Teich theory and to \cite{minsky:PR} and \cite{rafi:HT} for
some background on the geometry of the \Teich metric. 

For much of this paper, the arguments are meant to apply to both
$\T(S)$ which is the \Teich space of a connected surface and to 
$\T(\Sigma, L)$ where $\Sigma$ is disconnected and the
space is truncated at infinity. In such situations, we use the notation $\T$ to refer to 
either case and use the full notation where the discussion is specific to one
case or the other. Similarly, $f$ denotes either a quasi-isometry $f_S$
of $\T(S)$ or a  quasi-isometry $f_\Sigma$ of $(\Sigma, L)$. A similar convention 
is also applied to other notations as we suppress symbols
$S$ or $\Sigma$ to unify the discussion in the two cases. For example,
$\mul$ and $\add$ could refer to $\mul_S$ and $\add_S$ or to 
$\mul_\Sigma$ and $\add_\Sigma$ and $\xi$ could refer to $\xi(S)$ or $\xi(\Sigma)$. 

By a \emph{curve}, we mean the free isotopy class of a non-trivial, non-peripheral 
simple closed curve in either $S$ or $\Sigma$. Also, by a subsurface
we mean a free isotopy class of a subsurface $U$, where the inclusion map induces
an injection between the fundamental groups. We always assume that $U$ is not a pair 
of pants. When we say $\gamma$ is a curve in $U$, we always assume that it is not 
peripheral in $U$ (not just in $S$). We write $\alpha \subset \partial U$ 
to indicate that the curve $\alpha$ is a boundary component of $U$. 

\subsection{Product Regions} \label{Sec:PR} 
We often examine a point $x \in \T$ from the point of view of its subsurfaces. 
For every subsurface $U$, there is a projection map
\[
\psi_U \from \T \to \T(U)
\] 
defined using the Fenchel-Nielson coordinates (see \cite{minsky:PR} for details).
When the boundary of $U$ is not short, these maps are not well behaved.
Hence these maps should be applied only when there is an upper-bound on the length
of $\partial U$ (see below). 

When $U$ is an annulus, or when $\xi(U)=1$, the space $\T(U)$ can be
identified with the hyperbolic plane $\H$;  however the \Teich metric differs 
from the usual hyperbolic metric by a factor of $2$. We always assume that $\H$ 
is equipped with this metric which has  constant curvature $-4$. We denote
the $\psi_U(x)$ simply by $x_U$. 

A \emph{decomposition} of $S$ is a set $\calU$ of pairwise disjoint subsurfaces 
of $S$ that fill $S$. Subsurfaces in $\calU$ are allowed to be annuli (but not
pairs of pants). In this context, \emph{filling} means that every curve in $S$
either intersects or is contained in some $U \in \calU$. In particular, for every
$U \in \calU$, the annulus associated to every boundary curve of $U$ is also
included in $\calU$. The same discussion holds for $\Sigma$. 

For a decomposition $\calU$ and $\ell_0>0$ define
\[
\T_\calU = \Big\{ x \in \T
   \ST \forall \alpha \subset \partial U, \quad \Ext_x(\alpha) \leq \ell_0 \Big\}.
\]
\begin{theorem}[Product Regions Theorem, \cite{minsky:PR}] 
\label{Thm:Product-Regions} 
For $\ell_0$ small enough, 
\[
\psi_\calU = \prod_{U \in \, \calU} \psi_U
\from \T_\calU \to \prod_{U \in \, \calU} \T(U)
\]
is an isometry up to a uniform additive error $\Distance_0$. Here, the product on 
the right hand side is equipped with the sup metric. That is, for $x^1, x^2$
\begin{equation} \label{Eq:PR}
d_\T(x^1, x^2) - \Distance_0 \leq \sup_{U \in \, \calU} d_{\T(U)}(x^1_U, x^2_U)
\leq d_\T(x^1, x^2) + \Distance_0.
\end{equation}
\end{theorem}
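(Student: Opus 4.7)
The plan is to establish the two inequalities in \eqnref{Eq:PR} separately. The right-hand inequality, $\sup_{U \in \calU} d_{\T(U)}(x^1_U,x^2_U) \leq d_\T(x^1,x^2) + \Distance_0$, follows from the fact that each projection $\psi_U$ is coarsely $1$-Lipschitz on $\T_\calU$: given a quasi-conformal map realizing $d_\T(x^1,x^2)$, its restriction to $U$ (after adjustment in the standard collar around $\partial U$) has the same dilatation and compares $x^1_U$ to $x^2_U$, at the cost of an additive error depending only on $\ell_0$. The left-hand inequality is the substantive part, and requires constructing an efficient quasi-conformal map whose dilatation is controlled by $R := \sup_{U \in \calU} d_{\T(U)}(x^1_U,x^2_U)$.

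To produce such a map, I would decompose $S$ (or $\Sigma$) into the non-annular pieces of $\calU$ and the standard collars of their boundary curves, which are precisely the annuli in $\calU$. On each non-annular $U$, I would take a quasi-conformal map realizing $d_{\T(U)}(x^1_U,x^2_U)$ that is the identity in a neighborhood of $\partial U$; this is possible because the extremal length bound $\Ext_{x^j}(\alpha) \leq \ell_0$ for each $\alpha \subset \partial U$ forces $\alpha$ to have a definite collar by Maskit's extremal-length collar comparison. On each annular factor $A \in \calU$ with core $\alpha$, I would apply a quasi-conformal map matching the Fenchel--Nielsen length and twist data of $x^1_A$ and $x^2_A$; by the identification $\T(A) \cong \H$, this map has dilatation at most $e^{2 d_{\T(A)}(x^1_A,x^2_A)} \leq e^{2R}$. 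Gluing these pieces together would yield a single map $\Phi : x^1 \to x^2$ of dilatation $e^{2R + O(1)}$, whence $d_\T(x^1,x^2) \leq R + \Distance_0$.

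The main obstacle is this final assembly step. When a non-annular $U$ meets an annular collar along a short boundary $\alpha$, the twist coordinate of $\alpha$ induced from $U$ must agree with the twist parameter dictated by $A$; because the quasi-conformal map on $U$ is only prescribed up to an $O(1)$ twist in the thin collar, there will in general be a bounded twist mismatch. The key observation for absorbing this mismatch is that the collar of $\alpha$ has modulus bounded below in terms of $\ell_0$, so a uniformly quasi-conformal twist correction supported in a slightly larger annulus can resolve the discrepancy without inflating the dilatation by more than an amount depending on $\ell_0$. Summing the correction errors over all boundary curves produces the additive constant $\Distance_0 = \Distance_0(\ell_0)$ claimed in the statement, independent of the particular points $x^1, x^2$ and of the decomposition $\calU$.
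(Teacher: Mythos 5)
The paper does not prove \thmref{Thm:Product-Regions}; it is cited from Minsky's paper \cite{minsky:PR}, so there is no internal proof to check you against. For the record, Minsky's argument is quite different from yours: rather than constructing a quasi-conformal map, he establishes a comparison of extremal lengths, showing that for $x$ in the thin region $\T_\calU$ and any simple closed curve $\gamma$, $\Ext_x(\gamma)$ is comparable to the maximum of the ``restricted'' extremal lengths over the factors $U\in\calU$; he then invokes Kerckhoff's formula $d_\T(x,y)=\tfrac12\log\sup_\gamma \Ext_x(\gamma)/\Ext_y(\gamma)$ to convert the extremal-length comparison into the distance estimate \eqref{Eq:PR}. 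That route avoids any gluing of maps entirely.

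Your proposal of building an efficient quasi-conformal map $\Phi\colon x^1\to x^2$ piece by piece is a legitimate alternative strategy, and the easy direction (coarse $1$-Lipschitzness of $\psi_U$) is essentially fine. But the hard direction as you have written it has a genuine gap at the step you yourself call the ``main obstacle.'' You assert that after replacing the Teichm\"uller extremal map on each non-annular $U$ by one that is the identity near $\partial U$, the residual twist mismatch at each gluing curve $\alpha$ is $O(1)$, and that this bounded mismatch can then be absorbed in the collar because the collar modulus is bounded below in terms of $\ell_0$. The second half is fine, but the first half is exactly the substance of the theorem and is not justified by anything you say. A Teichm\"uller map between the puncture-ified surfaces $x^1_U$ and $x^2_U$ is near-conformal in the cusp, so it is close to a rotation there; that rotation has a well-defined amount $\theta\in\R$ (not merely mod $2\pi$), and making the map the identity near $\partial U$ costs an untwisting by $\theta$. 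For the glued map to have dilatation $e^{2R+O(1)}$ you need $\theta=O(1)$, or at least $\theta$ controlled in terms of the collar modulus, and a lower bound on the collar modulus alone does not give this. One needs an actual argument that the Teichm\"uller map can be normalized in the cusp with uniformly bounded cost before being truncated to the finite-modulus collar; that is a nontrivial extremal-length (or Teichm\"uller-theoretic) input, not a formal consequence of the collar lemma. Two smaller points: the extremal-length versus hyperbolic-length comparison you want near a short curve is usually attributed to Maskit, but the existence of a wide embedded collar around a short curve is the collar lemma (Keen, Halpern), a separate statement; and what you call ``restricting'' the globally extremal map to $U$ in the easy direction also needs a word, since that map does not literally preserve $U$ and must be pre/post-composed with bounded-dilatation isotopy adjustments, again using the collar.
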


For the rest of the paper, we fix the value of $\ell_0$ that makes this statement 
hold. We also assume that two curves of length less than $\ell_0$ do not
intersect. We refer to $\T_\calU$ as \emph{the product region associated to 
$\calU$}.  

We say $\calU$ is \emph{maximal} if every $U \in \calU$ is either an annulus, 
or $\xi(U)=1$ (recall that pairs of pants are always excluded). That is, if $\calU$ 
is maximal then the associated product region is isometric, up to an additive error 
$\Distance_0$, to a subset of a product space $\prod_{i=1}^\xi \H$ equipped with 
the sup metric. For the rest of the paper, we always assume the product 
$\prod_{i=1}^\xi \H$ is equipped with the sup metric. 
For points $x, y \in \T(S)$, we say  \emph{$x$ and $y$ are in the same maximal 
product region} if there is a maximal decomposition $\calU$
where $x, y \in \T_\calU$. Note that such a $\calU$ is not unique. For example, 
let $P$ be a pants decomposition and let $x$ be a point where the length in $x$ 
of every curve in $P$ is less than $\ell_0$. Then there are many decompositions 
$\calU$ where every $U \in \calU$ is either a punctured torus or a four-times punctured 
sphere with $\partial U \subset P$ or an annulus whose core curve is in $P$. 
The point $x$ belongs to  $\T_\calU$ for every such decomposition $\calU$. 

\begin{figure}[ht]
\setlength{\unitlength}{0.01\linewidth}
\begin{picture}(100, 20)
\put(1,0){\includegraphics[width=48\unitlength]{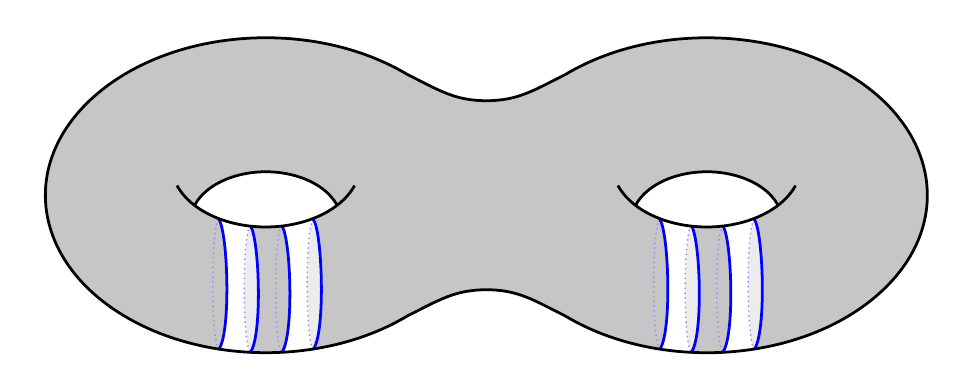}}
\put(51,0){\includegraphics[width=48\unitlength]{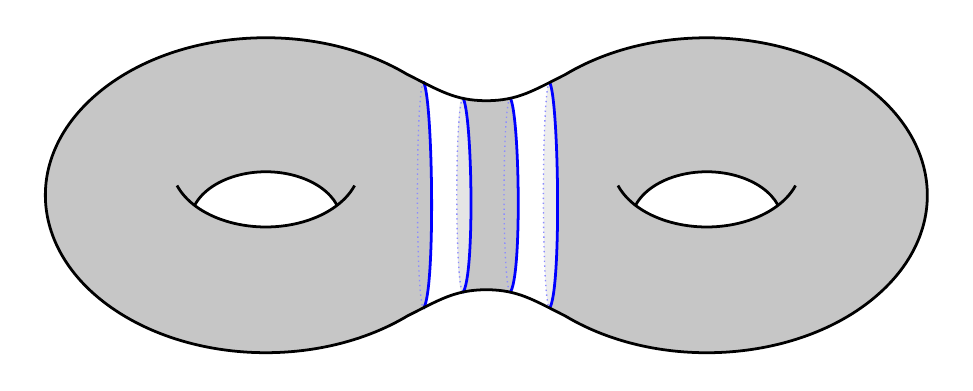}}

\put(3,17){$\calU_1$}
\put(53,17){$\calU_2$}

\put(13, -1){$\alpha$} 
\put(35, -1){$\beta$}
\put(74, 1){$\gamma$}

\end{picture} 
\caption{Two maximal decompositions $\calU_1$ and $\calU_2$ are depicted above.
Note that $\calU_1 \cap \, \calU_2 = \emptyset$. However, for the pants decomposition 
$P = \{ \alpha, \beta, \gamma \}$, any point $x \in \T$ where all curves in 
$P$ have a length less than $\ell_0$ is contained in $\T_{\calU_1} \cap \T_{\calU_2}$.} 
\label{Fig:Surface} 
\end{figure}

\subsection{Short curves on a surface} \label{Sec:Short} 
The thick part $\Tt$ of $\T$ is the set of points $x$
where, for every curve $\gamma$, $\Ext_X(\gamma) \geq \ell_0$. 
There is a constant $\Bers$ (the Bers constant) so that, for any point $z \in \Tt$, 
the set of curves that have extremal length at most $\Bers$ \emph{fills} the surface. 
That is, every curve intersects a curve of length at most $\Bers$. 
Note that every $x \in \T$ contains a curve of length at most $\Bers$. 
In fact, we choose $\Bers$ large enough so that every $x \in \T$ has a
pants decomposition of length at most $\Bers$. We call such a pants decomposition
the \emph{short pants decomposition at $x$} and denote it by $P_x$. 
We also assume that $\ell_0$ is small enough so that if $\Ext_x(\alpha) \leq \ell_0$
($\alpha$ is $\ell_0$--short) then $\alpha$ does not intersect any 
$\Bers$--short curve. Hence $P_x$ contains every $\ell_0$--short curve. 

It is often more convent to work with the logarithm of length. For $x \in T$
and $\alpha \in P_x$, define $\tau_x(\alpha)$ to be the largest number so that
if $d_\T(x, x') \leq \tau_x(\alpha)$ then $\Ext_{x'}(\alpha) \leq \ell_0$. 
From the product regions theorem, we have
\[
\left| \tau_x(\alpha) - \log \frac 1{\Ext_x(\alpha)} \right| = O(1),
\]
where the constant on the right hand side depends on the value of 
$\Distance_0$ and $\log 1/\ell_0$. We often need to \emph{pinch} a curve. 
Let $x \in \T$ and $\tau$ be given and let $\alpha\in P_x$ with $\tau_x(\alpha)=O(1)$. 
Let $x'$ be a point with $d_\T(x, x')=O(1)$ and where the length of $\alpha$ is $\ell_0$. 
Let $U = S - \alpha$ and let $x''$ be the point so that 
\[
\tau_{x''}(\alpha) = \tau, \qquad
x'_U =x''_U \qquad\text{and}\qquad
\Re (x'_\alpha) = \Re (x''_\alpha). 
\]
The last condition means $x'$ and $x''$ have no relative twisting around $\alpha$. 
We then say \emph{$x''$ is a point obtained from $x$ by pinching $\alpha$}. 
There is a constant $\dpinch$ so that 
\[
d_\T(x, x'') \leq \tau + \dpinch. 
\]

\subsection{Subsurface Projection} \label{Sec:Subsurface} 
Let $U$ be a subsurface of $S$ with $\xi(U) \geq 1$. Let $\calC(U)$ denote the curve
graph of $U$; that is, a graph where a vertex is a curve in $U$, and an edge is a pair 
of disjoint curves. When $\xi(U) =1$, $U$ does not contain disjoint curves. 
Here an edge is a pair of curves that intersect minimally; once in the punctured-torus 
case and twice in the four-times-punctured sphere case. 
In the case $U$ is an annulus with core curve $\alpha$, in place 
of the curve complex, we use the subset $H_\alpha \subset \T(U)$ of all points where
the extremal length of $\alpha$ is at most $\ell_0$. This is a horoball in $\H=\T(U)$.
Depending on context, we use the notation $\calC(U)$ or $H_\alpha$. 

There is a projection map 
\[
\pi_U \from \T(U) \to \calC(U)
\] 
that sends a point $z \in \T(U)$ to a curve $\gamma$ in $U$ with 
$\Ext_z(\gamma) \leq \Bers$. This is not unique but the image has a uniformly bounded 
diameter and hence the map is coarsely well defined. When $U$ is an annulus, 
$\pi_U(z)$ is the same as $\psi_\calU(z)$ if $\Ext_z(\alpha) \leq \ell_0$ and, otherwise, 
is the point on the boundary of $H_\alpha$ where the real value is 
twisting of $z$ around $\alpha$. (see \cite{rafi:HT} for the definition and 
discussion of twisting). 

We can also define a projection $\pi_U(\gamma)$ where $\gamma$ is any curve
that intersects $U$ non-trivially. If $\gamma\subset U$ then choose the projection to 
be $\gamma$.  If $\gamma$ is not contained in $U$ then $\gamma\cap U$ is a 
collection of arcs  with endpoints on $\partial U$.  Choose one such arc and perform a 
surgery using this arc and a sub-arc of $\partial U$ to find a point in $\calC(U)$. 
The choice of different arcs or different choices of intersecting pants curves 
determines a set of diameter $2$ in $\calC(U)$; hence the projection is coarsely 
defined. Note that this is not defined when $\gamma$ is disjoint from $U$. 
We also define a projection $\T(S) \to \calC(U)$ to be $\pi_U \circ \pi_S$,
however we still denote it by $\pi_U$. For $x,y \in \T$, we define
\[
d_U(x,y) := d_{\calC(U)}\big(\pi_U(x), \pi_U(y) \big).
\]
For curves $\alpha$ and $\beta$, $d_U(\alpha, \beta)$ is similarly defined. 

In fact, the subsurface projections can be used to estimate the distance
between two points in $\T$ (\cite{rafi:CM}). There is threshold $\thresh$ so that  
\begin{equation} \label{Eq:Distance}
d_\T(x,y) \emul \sum_{W \in \calW_\thresh} d_W(x,y),
\end{equation}
where $\calW_\thresh$ is the set of subsurfaces $W$ where $d_W(x,y) \geq \thresh$. 

\begin{definition} \label{Def:CB}
We say a pair of points $x,y \in \T(S)$ are $M$--cobounded relative to
a subsurface $U \subset S$ if $\partial U$ is $\ell_0$--short in
$x$ and $y$ and if, for every surface $V \neq U$, 
$d_V(x,y)\leq M$. If $U=S$, we simply say $x$ and $y$ are
$M$--cobounded. 

Similarly, we say a pair of curves $\alpha,\beta$ are $M$-- cobounded relative to 
$U$ if for every $V\subsetneq U$, $d_V(\alpha,\beta)\leq M$ when defined.  
If $U=S$, we simply say $\alpha$ and $\beta$ are $M$--cobounded. 

A path $g$ in $\T(S)$ or in $\calC(U)$ is $M$--cobounded relative to $U$ 
if every pair of points in $g$ are $M$--cobounded relative to $U$.  Once and for 
all, we choose a constant $\cb$ so that through every point $x \in \T(S)$ and for every 
$U$ whose boundary length is at most $\ell_0$ in $x$, there is a bi-infinite path 
in $\T(S)$ passing through $x$ that is $\cb$--cobounded relative to $U$.
One can, for example take an axis of a pseudo-Anosov element in $\T(U)$
and then use the product regions theorem to elevate that to a path in $\T$
whose projections to other subsurfaces are constant. When we say 
a geodesic $g$ in $\T(S)$  is cobounded relative to $U$, we always mean that it is 
$\cb$--cobounded relative to $U$. From  \eqnref{Eq:Distance} (the distance formula) 
we have, for $x$ and $y$ along such $g$, 
\begin{equation}
\label{eq:distance}
d_\T(x,y) \emul d_U(x,y). 
\end{equation}
\end{definition}
%
%

\section{Rank is Preserved} \label{Sec:Coarse}
In this section, we recall some results from \cite{rafi:CD} and we develop 
them further to show that the set of points in \Teich space
with maximum rank is coarsely preserved (see \propref{Prop:Rank-Preserved} below). 
Since the notation $\emul$ and $\gmul$ where used in \cite{rafi:CD}, we continue 
to use them in this section. Recall from \cite{rafi:CD} that 
${\mathbf A} \emul {\mathbf B}$ means there is a constant $C$, depending
only on the topology of $S$ or $\Sigma$, so that 
$\frac{\mathbf A}C \leq {\mathbf B} \leq C {\mathbf A}$. We say ${\mathbf A}$
and ${\mathbf B}$ are \emph{comparable}. Similarly, 
${\mathbf A} \eadd {\mathbf B}$ means there is a constant $C$, depending
only on the topology of $S$ or $\Sigma$, so that 
${\mathbf A} - C \leq {\mathbf B} \leq {\mathbf A}+C$. We say ${\mathbf A}$
and ${\mathbf B}$ are \emph{the same up to an additive error}.
For the rest of the paper, we would need to be more careful with constant. 
The only constants from this section that is used later is the constant $\distance_0$ 
from \propref{Prop:Rank-Preserved}. 

\subsection*{Coarse Differentiation and Preferred Paths} 
A path $g \from[a,b] \to \T$ is called \emph{a preferred path} if, 
for every subsurface $U$, the image of $\pi_U \circ g$ is a reparametrized
quasi-geodesic in $\calC(U)$. We use preferred paths as coarse analogues of 
straight lines in $\T$. 

\begin{definition} \label{Def:size}
A \emph{box} in $\R^n$ is a product of intervals; namely
$B= \prod_{i=1}^n I_i$, where $I_i$ is an interval in $\R$. We say
a box $B$ is \emph{of size $R$} if, for every $i$, $|I_i| \gmul R$ and if the diameter 
of $B$ is less than $R$. Note that if $B$ is of size $R$ and of size $R'$, then
$R \emul R'$. The box in $\R^n$ is always assumed to be equipped with the 
usual Euclidean metric. 

For points $a,b \in B$, we often treat the geodesic segment $[a,b]$ in $B$
as an interval of times parametrized by $t$. A map $f \from B \to \T$ from a box 
of size $R$ in $\R^n$ to $\T$ is called \emph{$\ep$--efficient} if, for 
any pair of points $a, b \in B$, there is a preferred path $g \from [a,b] \to \T$ so that, 
for $t \in [a,b]$
\[
d_\T\big( f(t), g(t)\big) \leq \ep R.
\]
\end{definition}

Let $B$ be a box of size $L$ in $\R^n$ and let $\underline B$ be
a central sub box of $B$ with comparable diameter (say a half). 
For any constant $0<R\leq L/3$, let $\calB_R$ be a subdivision $\underline B$ to 
boxes of size $R$. That is, 
\begin{enumerate}
\item boxes in $\calB_R$ are of size $R$, 
\item they are contained in $\underline B$ and hence their distance
to the boundary of $B$ is comparable to $L$, 
\item they have disjoint interiors and 
\item $| \calB_R| \emul (L/R)^n$.
\end{enumerate}
The following combines Theorem~2.5 and Theorem~4.9 in \cite{rafi:CD}.
 
\begin{theorem}[Coarse Differentiation \cite{rafi:CD}]  \label{Thm:Differentiable}
For every $K,C, \ep, \theta$ and $R_0$ there is $L_0$ so that the following 
holds. For $L \geq L_0$, let $f \from B \to \T$ be a $(K, C)$--quasi-Lipschitz 
map where $B$ is a box of size $L$ in $\R^n$. Then, there is a scale $R \geq R_0$
so that the proportion of boxes $B' \in \calB_R$ where $f|_{B'}$ is
$\ep$--efficient is at least $(1-\theta)$. 
\end{theorem}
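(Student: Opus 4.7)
The plan is to proceed by contradiction via a coarse differentiation argument in the spirit of Eskin-Fisher-Whyte, with preferred paths in $\calT$ substituting for straight lines in a symmetric space. Fix $K, C, \ep, \theta, R_0$ and suppose no $L_0$ works: then for arbitrarily large $L$ there exists a $(K,C)$--quasi-Lipschitz map $f \from B \to \calT$ on a box of size $L$ such that at \emph{every} scale $R$ with $R_0 \leq R \leq L/3$, strictly more than a $\theta$-fraction of $\calB_R$ fails to be $\ep$-efficient. I will extract a contradiction by producing a lower bound of order $\log(L/R_0)$ on a ``coarse total variation'' of $f$ along generic coordinate segments, and comparing it with the Lipschitz upper bound of order $KL$.

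First, I would quantify the cost of a single inefficient box. If $B' \in \calB_R$ is not $\ep$-efficient, there exist $a, b \in B'$ such that no preferred path from $f(a)$ to $f(b)$ remains within $\ep R$ of $f|_{[a,b]}$. Since preferred paths project to reparametrized quasi-geodesics in every $\calC(U)$, and since the distance formula \eqref{Eq:Distance} recovers $d_\calT$ from the subsurface projections, this failure must be witnessed by some subsurface $U$: the trajectory $\pi_U \circ f|_{[a,b]}$ either backtracks in $\calC(U)$ by a definite amount, or, inside some thin product region, deviates by a definite amount in the $U$-factor. Either way, one extracts a scale-proportional defect $\delta(B') \gmul \ep R$, localized to a subsurface $U=U(B')$.

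Next I would sum these defects over a dyadic tower of scales $R_k = 2^k R_0$ with $N \emul \log(L/R_0)$ levels, integrating via Fubini over the family of coordinate segments of length $\emul L$ through the central sub-box $\underline B$. At each scale the inefficient boxes cover a $\theta$-fraction of $\underline B$, so a positive fraction of segments meet inefficient boxes at a positive fraction of scales. The defects accumulate additively along each such segment: defects in distinct subsurfaces are recorded in distinct terms of the distance formula, while defects in the same subsurface at genuinely distinct scales record backtrackings of $\pi_U \circ f$ at distinct resolutions. One obtains
\[
\sum_W d_W\big(f(a), f(b)\big) \gmul \ep\, \theta\, L\, \log(L/R_0)
\]
for a positive fraction of pairs $a,b$ along such segments. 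Since $d_\calT(f(a), f(b)) \leq KL + C$, the distance formula bounds the left-hand side above by a uniform multiple of $KL$, forcing $\ep\, \theta\, \log(L/R_0) \lmul K$; this fails once $L \geq L_0(K, C, \ep, \theta, R_0)$.

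The main obstacle is the additivity claim in the third paragraph: one must exclude the possibility that defects recorded in the same subsurface at different scales represent merely the same backtracking seen twice, or even cancel. In the Eskin-Fisher-Whyte setting on products of hyperbolic planes this is transparent from horoball geometry; here it must be handled subsurface by subsurface, using the Product Regions Theorem \thmref{Thm:Product-Regions} in thin regions and the distance formula to aggregate contributions. This is the technical heart of Theorems~2.5 and 4.9 in \cite{rafi:CD}, whose argument I would follow.
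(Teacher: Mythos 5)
The paper does not give a proof of this statement: it is imported directly from \cite{rafi:CD}, where it is the concatenation of Theorem~2.5 (a scale-selection statement) and Theorem~4.9 (the Teichm\"uller-specific interpretation of efficiency in terms of preferred paths). So there is no internal argument to compare against, and your sketch must be judged on its own.

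Your high-level frame --- contradiction, dyadic scales, a cost-versus-budget count, Fubini reduction to coordinate segments --- is the correct Eskin--Fisher--Whyte blueprint, and you correctly identify that the technical heart lies in the additivity of the ``defects''. But the accounting as you have set it up does not work, and the failure is structural, not merely a missing estimate. You propose to accumulate the scale-$R_k$ defects into the quantity $\sum_W d_W\big(f(a),f(b)\big)$ and derive a lower bound of order $\ep\theta L\log(L/R_0)$. This is impossible: by the distance formula \eqref{Eq:Distance} that sum is $\emul d_\calT\big(f(a),f(b)\big)\lmul KL+C$ for \emph{any} pair of endpoints, independently of how badly $f$ behaves at intermediate times. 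Backtracking of $\pi_U\circ f$ at interior points does not inflate $d_U\big(f(a),f(b)\big)$ --- if anything it suppresses it. A related problem arises one step earlier: the assertion that inefficiency on a single box $B'$ yields a defect $\gmul\ep R$ \emph{localized in one subsurface} $U(B')$ is not justified, since the $\ep R$-deviation from a preferred path can be distributed across many subsurfaces, each carrying a small amount.

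What actually accumulates in coarse differentiation is the \emph{waste} of $f$ along a segment: for a subdivision $a=t_0<\cdots<t_N=b$, the quantity $\sum_i d_\calT\big(f(t_i),f(t_{i+1})\big) - d_\calT\big(f(a),f(b)\big)$. This is nonnegative, monotone under refinement, and bounded above by $(K-K^{-1})|b-a|+O(C)$ for a $(K,C)$--quasi-Lipschitz map. If a $\theta$-fraction of boxes at scale $R_k$ are inefficient, the waste increment going from scale $R_{k-1}$ to $R_k$ along a typical segment is $\gmul\ep\theta L$, and since these increments are disjoint over the $\log(L/R_0)$ dyadic scales, the Lipschitz budget forces all but $O(K^2/\ep\theta)$ scales to be good. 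This step is purely metric and makes no reference to subsurfaces or the distance formula; it is the content of Theorem~2.5 of \cite{rafi:CD}. The subsurface projections, the product regions theorem, and hyperbolicity of the curve graphs enter only in the \emph{separate} step (Theorem~4.9 of \cite{rafi:CD}) that upgrades ``small waste at scale $R$'' to ``within $\ep R$ of a preferred path.'' Your sketch fuses these two steps, and the fusion is precisely what makes the additivity claim in your third paragraph unobtainable: the quantity you are summing is already saturated before any defects are counted.
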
 

Even though we have no control over the distribution of efficient boxes,
the following Lemma says we can still connect every two point in $\underline B$ 
with a path that does not intersect too many non-efficient boxes. 

\begin{lemma} \label{Lem:Path}
Let $L$, $R$, $\calB_R$, $\ep$ and $\theta$ be as above.  
Then, for any pair of points $a,b \in \underline B$, there is a path $\gamma$ in 
$\underline B$ connecting them so that $\gamma$ is covered by at most $O(L/R)$ 
boxes and the number of boxes in the covering that are not $\ep$--efficient is 
at most $O\big(\sqrt[n]{\theta} \frac LR\big)$. 
\end{lemma}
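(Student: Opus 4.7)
The plan is a Fubini-style averaging over a family of piecewise axis-parallel paths from $a$ to $b$ parametrized by a perturbation $s$ in an $(n{-}1)$-dimensional grid. After identifying $\underline{B}$ (up to a constant) with a Euclidean box of side $\asymp L$, and reducing by the triangle inequality to the case where $a$ and $b$ differ in a single coordinate---say $a=(a_1,c)$ and $b=(b_1,c)$ with $c\in\R^{n-1}$---I would consider, for each $s\in\R^{n-1}$, the piecewise axis-parallel path
\[
\gamma_s \,:\, a \;\longrightarrow\; (a_1,c+s) \;\longrightarrow\; (b_1,c+s) \;\longrightarrow\; b,
\]
letting $s$ range over a grid $\calG\subset\R^{n-1}$ of spacing $r:=\lceil\theta^{-1/n}R\rceil$ inside a box of side proportional to $L$ centered at $c$, chosen small enough that $\gamma_s\subset\underline{B}$ for every $s\in\calG$.

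Two properties would be immediate from the construction: (a) each $\gamma_s$ has Euclidean length $O(L)$ and hence meets at most $O(L/R)$ boxes of $\calB_R$; and (b) because the middle segments $(a_1,c+s)\to(b_1,c+s)$ are translates of one another by vectors in $\R^{n-1}$ and the members of $\calG$ are $r$-separated with $r\geq R$, any fixed $B'\in\calB_R$ is crossed by the middle segment for at most $O(1)$ values of $s$. Summing over the at most $\theta|\calB_R|\lmul\theta(L/R)^n$ non-efficient boxes would then bound the total (non-efficient box, middle segment) incidences by $O(\theta(L/R)^n)$, and since $|\calG|\gmul(L/r)^{n-1}$, averaging would produce some $s^\ast\in\calG$ whose middle segment meets at most
\[
\frac{\theta\,(L/R)^n}{(L/r)^{n-1}} \;=\; \theta\,\frac{L}{R}\,\Big(\frac{r}{R}\Big)^{n-1} \;\lmul\; \sqrt[n]{\theta}\,\frac{L}{R}
\]
non-efficient boxes; the desired path would be $\gamma:=\gamma_{s^\ast}$.

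The principal obstacle will be the two end segments $a\to(a_1,c+s)$ and $(b_1,c+s)\to b$: these all share a common endpoint at $a$ or $b$, so the $O(1)$-multiplicity bound of (b) fails for boxes close to $a$ or $b$. I would handle this by a shell-by-shell count in the hyperplanes $x_1=a_1$ and $x_1=b_1$: the shell at distance $\sim d$ from the common endpoint contains at most $\theta(d/R)^{n-1}$ non-efficient boxes, each met by $O\bigl((L/r)^{n-1}(R/d)^{n-2}\bigr)$ end segments, so after summing over dyadic shells the total end-segment contribution averages to $O(\theta L/R)$ non-efficient boxes per path. Since $\theta\leq\sqrt[n]{\theta}$, this is a lower-order correction absorbed into $O(\sqrt[n]{\theta}\,L/R)$, so the same $s^\ast$ would work for the entire path $\gamma_{s^\ast}$.
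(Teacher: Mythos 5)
Your proposal is structurally the same as the paper's argument --- a pigeonhole over a family of roughly $N^{n-1}$ piecewise-linear paths, where $N = \sqrt[n]{\theta}\,L/R$, whose middle segments are translation-separated --- and your averaging over the middle segments is correct. The gap is in the end segments. You place the grid $\calG$ in a box of side comparable to $L$ with spacing $\theta^{-1/n}R$; the end segments $a\to(a_1,c+s)$ then have length up to order $L$ and all share the endpoint $a$. Your shell-by-shell patch silently assumes a \emph{local} density bound, namely that the dyadic shell at radius $d$ from $a$ inside $\{x_1=a_1\}$ contains at most $O\big(\theta(d/R)^{n-1}\big)$ non-efficient boxes. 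But \thmref{Thm:Differentiable} controls only the \emph{global} fraction of non-efficient boxes in $\calB_R$; nothing prevents them from concentrating near $a$. Concretely, if every box in the slab $|x_1-a_1|\leq R$ is non-efficient (this uses a fraction about $R/L$ of all boxes, which the hypothesis permits whenever $\theta \geq R/L$), then every end segment of $\gamma_s$ lies in that slab and crosses about $|s|/R$ non-efficient boxes, so the average over $s\in\calG$ is of order $L/R$, far exceeding $O(\sqrt[n]{\theta}\,L/R)$. The shell count therefore cannot close the argument.

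The paper avoids this by a different parameter choice: the intermediate points $p_i,q_i$ are taken from an $R$-net of $(n-1)$-dimensional boxes $Q_a,Q_b$ of diameter about $NR$ centered at $a,b$ and perpendicular to $[a,b]$. This still produces about $N^{n-1}$ paths with $R$-separated middle segments, so the averaging over middle segments is unchanged and gives some $i$ whose middle segment meets at most $O\big(\theta(L/R)^n / N^{n-1}\big) = O(N)$ non-efficient boxes. The payoff is that each end segment $[a,p_i]$ now has length at most $NR$, hence meets at most $O(N)$ boxes; one simply charges \emph{all} of these as potentially non-efficient, and no shell averaging is needed. If you replace your grid of side comparable to $L$ and spacing $\theta^{-1/n}R$ with a grid of side comparable to $NR$ and spacing $R$, your argument goes through.
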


\begin{proof}
Let $N = \sqrt[n]{\theta} \frac LR$. First assume that the distance between $a$ an $b$ 
to the boundary of $\underline B$ is at least $NR$. Consider the geodesic segment 
$[a,b]$. Take a $(n-1)$--dimensional totally geodesic boxes $Q_a$ and $Q_b$ 
containing $a$ and $b$ respectively that are perpendicular to $[a,b]$, parallel to 
each other and have a diameter $N R$. Choose an $R$--net of points 
$p_1, \ldots, p_k$ in $Q_x$ and $q_1, \ldots, q_k$ in $Q_y$ so that $[p_i, q_i]$ 
is parallel to $[a,b]$. We have, for $1 \leq, i,j \leq k$, 
\[
d_{\R^n}\big( [p_i, q_i], [p_j, q_j] \big) > R
\qquad\text{and}\qquad
k \emul N^{n-1}.
\]
For $1 \leq i \leq k$, let $\gamma_i$ be the path that is a concatenation 
of  geodesic segments $[a,p_i]$, $[p_i, q_i]$ and $[q_i, b]$. We claim one of
these paths has the above property. 

Assume, for contradiction, that the number of non-efficient boxes along each 
$\gamma_i$ is larger than $cN$ for some large $c>0$. Then the total number of 
non-efficient boxes is at least
\[
k c N = c N^n = c \, \theta \left( \frac LR \right)^n. 
\]
But this is not possible for large enough value of $c$ (see property (4) of 
$\calB_R$ above and \thmref{Thm:Differentiable}). Hence, there is a $c=O(1)$ and  $i$ where 
$[p_i, q_i]$ intersects at most $c N$ non-efficient boxes. 

Note that, the segments $[a,p_i]$ and $[q_i,b]$ intersect at most 
$N$ boxes each. Hence the number of inefficient boxes intersecting $\gamma_i$ 
is at most $(c+2)N$. In the case $a$ or $b$ are close to the boundary, we choose 
points $a'$ and $b'$
nearby (distance $NR$) and apply the above argument to find an appropriate path
between $a'$ and $b'$ and then concatenate this path with segments $[a,a']$
and $[b,b']$. The total number of inefficient boxes along this path is at most
$(c+4)N$. This finishes the proof. 
\end{proof}

\subsection*{Efficient quasi-isometric embeddings}
In this section, we examine efficient maps that are also assumed to be
quasi-isometric embeddings. We will show that they have \emph{maximal rank};
they make small progress in any subsurface $W$ with $\xi(W) \geq 2$. 

\begin{definition} \label{Def:StandardFlat}
Let $\calU$ be decomposition of $S$. For every $U \in \calU$, let 
$g_U \from I_U \to \T(U)$ be a preferred path. Consider the box 
$B = \prod_U I_U \subset \R^m$, where $m$ is the number of elements in $\calU$. 
Consider the map 
\[
F\from B \to \T_\calU= \prod_{U \in \calU} \T(U)
\qquad\text{where}\qquad
F= \prod_{U \in \calU} g_U.
\] 
Then $F$ is a quasi-isometric embedding because each $g_U$ is a 
quasi-geodesic. We call this map a \emph{standard flat}
in $\T_\calU$.
\end{definition}

The map $\f$ below will be a modified version our map $f$ from \thmref{Thm:Main}.

\begin{theorem} \label{Thm:Tunnel}
For every $K$, $C$ and $M$, there is $\ep$ and $R_0$ so that 
the following holds. Let $\f \from B \to \T(S)$ be an $\ep$--efficient 
$(K,C)$--quasi-isometric embedding defined on a box $B\subset\R^{\xi}$ of size 
$R \geq R_0$, let $\omega_0$ be a $M$--cobounded geodesic in $\calC(W)$, 
where $W$ is a subsurface with $\xi(W)\geq 2$, and let $\pi_{\omega_0}$ be the 
closest point projection map from $\calC(W)$ to $\omega_0$. Then, for
\[
\pi=\pi_{\omega_0}\circ \pi_W\circ \f
\]
and points $a,b\in B$, we have
\[
d_W\big(\pi(a),\pi(b) \big)\leq \sqrt \ep R.
\]
\end{theorem}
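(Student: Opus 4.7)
The plan is to argue by contradiction. Suppose there exist points $a, b \in B$ with $d_W(\pi(a), \pi(b)) > \sqrt{\ep}\,R$, and aim to derive an incompatibility with the quasi-isometric embedding hypothesis once $\ep$ is chosen small and $R_0$ large in terms of $K, C, M$.

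Using $\ep$-efficiency, I would first choose a preferred path $g \from [a, b] \to \T$ with $d_\T(\f(t), g(t)) \leq \ep R$ for all $t$. By the definition of preferred path, $\pi_W \circ g$ is a reparametrized quasi-geodesic in $\calC(W)$; since $\pi_W$ is coarsely well-defined on bounded $\T$-balls avoiding the $\partial W$-thin part (with the Product Regions Theorem reducing the $\partial W$-thin case to the thick one), its endpoints agree up to additive error with $\pi_W(\f(a))$ and $\pi_W(\f(b))$. Hence their images under $\pi_{\omega_0}$ are at least $\sqrt{\ep}\,R - O(1)$ apart along $\omega_0$.

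Next, the Masur--Minsky hyperbolicity of $\calC(W)$ together with the $M$-coboundedness of $\omega_0$ makes closest-point projection onto $\omega_0$ strongly contracting on quasi-geodesics. So $\pi_W \circ g$ must fellow-travel $\omega_0$ at uniformly bounded $\calC(W)$-distance on some subinterval $[a', b'] \subset [a, b]$ realizing the $\omega_0$-progress. By coboundedness and the distance formula \eqnref{eq:distance}, $d_\T(g(a'), g(b'))$ is of the same order $\sqrt{\ep}\,R$; combined with the $\ep$-approximation and the $(K, C)$-QI-embedding lower bound, the Euclidean length $|a' - b'|_{\R^\xi}$ is at least of order $\sqrt{\ep}\,R / K$. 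In particular, $[a', b']$ supports a sub-box $B' \subset B$ of size comparable to $\sqrt{\ep}\,R$.

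The crux is that on $B'$ the image of $\f$ is trapped in a uniform neighborhood of the axis-shadow of $\omega_0$ in $\T$; because $\xi(W) \geq 2$, this shadow has rank strictly less than $\xi = \xi(S)$. Applying the rank-preservation results from \cite{rafi:CD} at the finer scale $R' \asymp \sqrt{\ep}\,R$ (which exceeds $R_0$ once $\ep$ is chosen appropriately and $R$ is large enough), the $(K, C)$-quasi-isometric embedding of a $\xi$-dimensional box into a strictly lower-rank region is impossible, producing the contradiction. The main obstacle is precisely this last step: rigorously converting ``fellow-traveling of a cobounded geodesic in $\calC(W)$ with $\xi(W) \geq 2$'' into a genuine rank reduction of the surrounding $\T$-region, and matching the $\sqrt{\ep}$ scale with the rank-preservation machinery so that $R'\geq R_0$ can be arranged. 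The hypothesis $\xi(W) \geq 2$ is essential here: when $\xi(W) = 1$, $\T(W) \cong \H$ and fellow-traveling a geodesic in $\calC(W)$ produces no rank loss, which is consistent with the fact that the theorem only controls progress in subsurfaces of complexity at least two.
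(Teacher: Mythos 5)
Your first half follows the paper closely: using $\ep$-efficiency to replace $\f$ by a preferred path $g$, invoking hyperbolicity and coboundedness of $\omega_0$ to extract a subinterval $[a',b']$ along which $\pi_W\circ g$ fellow-travels $\omega_0$, and then identifying a sub-box $B'$ of size $\asymp\sqrt\ep R$ on which $\f$ is constrained. That much is accurate and essentially matches Step~1 of the paper's argument.

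The genuine gap is the step you flag yourself, and the way you propose to fill it is not quite the right tool. You say you would ``apply the rank-preservation results from \cite{rafi:CD}'' to rule out a $(K,C)$-quasi-isometric embedding of a $\xi$-dimensional box into a ``lower-rank region.'' But fellow-traveling a cobounded geodesic in $\calC(W)$ does not by itself locate the image of $B'$ inside a lower-rank subset of $\T$; the constraint it actually gives is different and more local: every subsurface $V$ that intersects $W$ (i.e.\ $V\subset W$ or $V\pitchfork W$) receives a \emph{bounded-diameter} projection of $g$ along the fellow-traveling segment (bounded geodesic image theorem for the $\alpha_2$--$\alpha_3$ segment, plus coboundedness of $\omega_0$ to cover the case where $\partial V$ is close to that segment). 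The paper then applies not a rank-\emph{preservation} statement but a \emph{structure} statement for $\ep$-efficient maps --- Theorem~7.2 of \cite{rafi:CD} --- which produces a sub-box $B''\subset B'$ of size $R''\geq \ep_\xi R'$ on which $\f$ is $O(\ep_0 R'')$-close to a standard flat $F$. Because $f$ is a quasi-isometric embedding of a $\xi$-dimensional box, $F$ must come from a \emph{maximal} decomposition $\calV$, so every $V\in\calV$ is an annulus or a complexity-one piece; in particular $W\notin\calV$, and since $\calV$ fills, some $V\in\calV$ intersects $W$. That $V$ is exactly one where progress is bounded, so $\diam_{\T(V)}F_V(I_V)\lmul\ep R+\ep_0 R''$, while the flat structure forces $\diam_{\T(V)}F_V(I_V)\emul R''$. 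Combining with $R''\geq\ep_\xi\sqrt\ep R\geq\ep R/\ep_0$ gives the contradiction once $\ep_0$ is small. So the missing ingredient in your outline is this ``efficient implies near a standard flat'' result and the ensuing dimension count inside the maximal decomposition, rather than a general rank-reduction principle applied to a shadow of $\omega_0$.
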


\begin{proof}
Assume by way of contradiction that for all large $R$ and all small $\epsilon$ there is a  
subsurface $W$, $\xi(W) \geq 2$, an $M$--cobounded geodesic $\omega_0$ in $\calC(W)$, 
a box $B$ of size $R$ in $\R^m$, and an $\ep$--efficient map $\f \from B \to \T$ 
and a pair of points $a,b\in B$ such that 
\begin{equation} \label{Eq:Long}
d_W \big( \pi(a), \pi(b) \big)\geq \sqrt\epsilon R.
\end{equation}
      
\subsection*{Step 1} 
Let $g \from [a,b] \to \T$ be the preferred path joining $\f(a)$ and $\f(b)$
coming from the efficiency assumption. Since $g$ is a preferred path, if  
$\omega$ is a geodesic in $\calC(W)$ joining $\pi_W(\f(a))$ and $\pi_W(\f(b))$, then 
$\omega$ can be reparametrized so that 
\[
d_W \big( g(t),\omega(t) \big) = O(1).
\]   
As stated in \eqnref{Eq:Long} we are assuming that the projection of the geodesic 
$\omega$ to $\omega_0$ has a length of at least $\sqrt\epsilon R$.
Then, by the hyperbolicity of $\calC(W)$, $\omega$ and therefore $\pi_W \circ g$ lie in a  
uniformly bounded neighborhood of $\omega_0$ along a segment of $g$ of length 
$\gmul \sqrt \epsilon R$. Divide this piece of $\omega_0$ into $3$ segments. 
Let $\alpha_1,\alpha_2, \alpha_3, \alpha_4\in \calC(W)$ be the corresponding 
endpoints of these segments which, for $i=1,2,3$, satisfy
\begin{equation} \label{Eq:Far}
d_W(\alpha_i,\alpha_{i+1})\gmul \sqrt \epsilon R.
\end{equation}
Let $\omega_{\rm mid}=[\alpha_2, \alpha_3]$ be the middle segment,
let $[c,d] \subset [a,b]$ be the associated time interval and let 
$g_{\rm mid} = g\Big|_{[c,d]}$. 

We claim that, for every $V$ so that $V \cap W \not = \emptyset$, (that is, either 
$V \subset W$ or $V \pitchfork W$) the image of the projection 
of $g_{\rm mid}$ to $\calC(V)$ has a bounded diameter. Note that, 
since $g$ is a preferred path, it is enough to prove either $d_V(\alpha_2, \alpha_3)$
or $d_V(\alpha_1, \alpha_4)$ is uniformly bounded, assuming those curves
intersect $V$. 

We argue in two cases. If every curve in $[\alpha_2, \alpha_3]$ intersects $V$,
the claim follows from the bounded geodesic image theorem 
\cite[Theorem 3.1]{minsky:CCII}. 
Otherwise, $\partial V$ is close to this segment and hence it is far from curves 
$\alpha_1$ and $\alpha_4$. Let $\bar \alpha_1$ and $\bar \alpha_4$ be curves
on $\omega_0$ that are close to $\alpha_1$ and $\alpha_4$ respectively. 
Then, $\partial V$ intersects every curve in $[\alpha_1, \bar \alpha_1]$ and
$[\alpha_4, \bar \alpha_4]$ and, by the bounded geodesic image theorem, the projections 
of these segments to $\calC(V)$ have bounded diameters. But $\omega_0$ is 
co-bounded. Hence, $d_V(\bar \alpha_1, \bar \alpha_4) =O(1)$ and therefore
$d_V(\alpha_1, \alpha_4) =O(1)$. This proves the claim.

\subsection*{Step 2}  To obtain a contradiction, we will find a large sub-box of $B$ 
that maps near a standard flat $F$ of maximal rank.
  
Note that the map $\pi$ above is quasi-Lipschitz. Choose a constant $D$ large compared 
to the quasi-Lipschitz constant of $\pi$  and the hyperbolicity constant of $\calC(W)$.
Let $a',b'$ be points in $B$ in a neighborhood of $a,b$ respectively so that
\begin{equation} \label{Eq:Near}
\|a-a'\|\leq \frac{\sqrt \ep R}{D}
\quad\text{and}\quad
\|b-b'\|\leq \frac{\sqrt \ep R}{D}.
\end{equation}
Let $g'$ be the preferred  path joining $\f(a'),\f(a')$ and $\omega'$ be the geodesic 
in $\calC(W)$ connecting $\pi_W(\f(a'))$ to $\pi_W(\f(b'))$. Consider the quadrilateral 
\[
\beta=\pi_W(\f(a)), \qquad 
\gamma=\pi_W(\f(b)) \qquad
\beta'=\pi_W(\f(a')), \qquad 
\gamma'=\pi_W(\f(b')),
\] 
in $\calC(W)$. From the assumption on $D$, the edges $[\beta, \beta']$ and 
$[\gamma, \gamma']$ are short compare to $[\beta, \gamma]$. From the 
hyperbolicity of $\calC(W)$, we conclude  that $\omega'$ has a  subsegment 
$\omega'_{\rm mid}$ that has a bounded Hausdorff distance to $\omega_{\rm mid}$.
Let $g'_{\rm mid}$ be the associated subsegment of $g'$ (see previous step). 
As we argued in the previous step, the projection of $g'_{\rm mid}$ to $\calC(V)$ has 
a bounded diameter for every $V \cap W \not = \emptyset$. In fact, it is close to the projection 
of $g_{\rm mid}$ to $\calC(V)$. 

The union of subsegments of type $[a', b']$ fill a $\frac{\sqrt \ep R}D$--neighborhood of $[c,d]$
and $|d-c| \emul \sqrt \ep R$. 
Therefore, there is a subbox $B'\subset B$ of size $R'\emul  \sqrt \epsilon R$ such 
that $(\pi_V\circ f)(B')$ has  bounded diameter for every $V \cap W \not = \emptyset$. 

For $\epsilon_0$ small to be chosen later, set $\ep_\xi=\ep_0^{6^\xi}$ and
assume $\epsilon$ is chosen so that
\[
\sqrt \epsilon < \ep_0 \epsilon_\xi.
\]
Since $f$ is $\ep$--efficient and $\ep<\ep_\xi$ it is $\epsilon_\xi$--efficient.  
By Theorem 7.2 of \cite{rafi:CD} (which can be applied if $R$ is large enough) 
there is a sub-box $B''\subset B'$ of size $R'' \geq \epsilon_\xi R'$ such that 
$\f(B'')$ is within $O(\epsilon_0 R'')$ of a standard flat $F$. The implied constants 
depend only on $K,C$ and $\xi$.  

We show this is impossible for $\epsilon_0$ sufficiently small. Note that
$\xi$ is the maximum  dimension of any standard flat. Since  $B''$ is a box 
of dimension $\xi$, and $f$ is a quasi-isometric embedding, the standard flat $F$ 
must have dimension $\xi$ as well. Let $\calV$ be the decomposition of $S$
with $|\calV| =\xi$, and let $F_V \from I_V \to \T(V)$ be the preferred paths 
where 
\[
F \from \prod_{V \in \calV} I_V \to \T.
\] 
Then $\f(B'')$ is contained in the $O(\epsilon_0 R'')$--neighborhood of 
the image of $F$. We assume $I_V$ is the smallest possible interval
for which this holds. Then, for $V \in \calV$, $F_V(I_V)$ has a diameter 
comparable to $R''$ which is the size of $B''$.

Since $|\calV| = \xi$, every $V \in \calV$ is either
an annulus or $\xi(V)=1$. Hence, they can not equal to $W$ and,
for at least one $V \in \calV$, we have $V \cap W \not = \emptyset$. In fact, we can assume 
$V$ is an annulus, because $\calV$ is maximal and if a subsurface is in 
$\calV$ the annuli associated to its boundary curves are also in $\calV$. 

From the assumption of the minimality of lengths of $I_V$, we know that 
every $t_V \in I_V$ can be completed to a vector in $\prod_{V \in \calV}I_V$
where the image is in the $O(\ep_0 R'')$--neighborhood of $\f(B'') \subset \f(B')$. 
In addition  any point in $\f(B')$ is $\ep R$ close to some $g'_{\rm mid}$. 
We already know that for any such $V$ the projection of $g'_{\rm mid}$ to $\calC(V)$ is $O(1)$. Combining these statements we find that  the projection of the image of $F$ to $\calC(V)$ of any such $V$ 
has a diameter $O(\ep R + \ep_0 R'')$. This means the same bound also 
holds for the diameter of the  projection to $\T(V)$; in the case where 
$V$ is an annulus and $\partial V$ is short, the two distances are the same. 
We have shown
\[
R'' \lmul \diam_{\T(V)}\big( F_V(I_V) \big) \lmul \ep R + \ep_0 R''.
\]
Therefore, $R'' \lmul \ep R$. But 
\[
R'' \geq \ep_\xi R' \gmul \ep_\xi \sqrt \ep R \ge \frac{\ep R}{\ep_0}.
\]
For $\epsilon_0$ sufficiently small, this is a contradiction. 
That is, the theorem holds for appropriate values of $\ep$ and $R_0$. 
\end{proof}

\subsection*{Maximal Rank is preserved}
Recall that, for $x \in \T(S)$ and a curve $\alpha$, $\tau_x(\alpha)$ is the largest number 
such that if $d(x,x')\leq \tau_x(\alpha)$ then $\Ext_{x'}(\alpha)\leq \ell_0$. 

For a point  $x$ in $\T(S)$, let $\calS_x= \calS_x(\ell_0)$ be the set of curves 
$\alpha$ such that $\Ext_x(\alpha)\leq \ell_0$. 

\begin{definition} \label{Def:Maximal_rank}
A point $x \in \T(S)$ has {\em maximal rank} when $S \setminus \calS_x$ 
are all either a pair of pants, a once punctured torus or a four-times punctured
sphere. Denote the set of points with maximal rank by $\MR$. 
Let $\LR$ its complement;  the set with  lower rank.
\end{definition}

\begin{definition} \label{Def:Adjacent} 
Suppose $x\in\MR$. 
We say a curve $\alpha\in \calS_x$ is {\em isolated} if by increasing its length to 
$\ell_0$ while keeping all other lengths the same one leaves $\MR$.  A pair of curves in $\calS_x$ 
are called {\em adjacent} if increasing both of their lengths to $\ell_0$ one leaves $\MR$. 
\end{definition}

The importance of this definition is that if $d(x,\LR)\geq d$, then every isolated curve $\alpha$ satisfies $\tau_x(\alpha)\geq d$, and for any pair of adjecent curves $\alpha_1,\alpha_2$,  at least one of which satisfies $\tau_x(\alpha_i)\geq d$. 

\begin{proposition}
\label{Prop:Rank-Preserved}
There exists $\distance_0>0$ such that, for a point $x \in \T$, 
if $d(x,\LR)\geq \distance_0$ then $f(x) \in \MR$.  
\end{proposition}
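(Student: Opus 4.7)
The proof is by contradiction. Suppose $d(x, \LR) \geq \distance_0$ (with $\distance_0$ to be chosen large) yet $f(x) \in \LR$; then some complementary component $W$ of the short curves at $f(x)$ satisfies $\xi(W) \geq 2$, and $\partial W$ is $\ell_0$-short at $f(x)$. The plan is to produce a direction near $f(x)$ in which $d_W$ makes linear progress, pull it back via a quasi-inverse of $f$ to a direction near $x$, encode this direction into a max-rank standard flat at $x$, and then apply \thmref{Thm:Tunnel} to the coarse differentiation of the composition to derive a contradiction.

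First, produce the direction. Using the constant $\cb$ from the Background, fix a bi-infinite $\cb$-cobounded geodesic $h \from \R \to \T$ through $f(x)$ relative to $W$; by \eqref{eq:distance}, $d_W(h(s), h(t)) \emul |s-t|$. Let $\omega_0$ be the corresponding cobounded geodesic in $\calC(W)$, with nearest-point projection $\pi_{\omega_0}$. Letting $q$ be a quasi-inverse of $f$ and $p_t := q(h(t))$, we obtain a bi-infinite quasi-geodesic near $x$ with $d_W(f(p_s), f(p_t)) \emul |s-t|$.

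Second, set parameters and build the flat. Fix $M = \cb$ and let $\ep, R_0$ be the constants produced by \thmref{Thm:Tunnel}; choose $\theta$ small enough that $\sqrt[\xi]{\theta} \ll 1/\mul$; let $L_0$ be the output of \thmref{Thm:Differentiable} for these $\ep, R_0, \theta$. Take $L \in [L_0, \distance_0/(3\mul)]$ and set $T = L/(2\mul)$, so that $p_t \in B_{\distance_0}(x) \subset \MR$ for $|t| \leq T$. Let $\calV$ be a max-rank decomposition at $x$ with $|\calV| = \xi$, and build a standard flat $F \from B \to \T_\calV$ of dimension $\xi$ through $x$, where $B \subset \R^\xi$ is a box of size $L$, whose defining preferred paths are selected so that $\{p_t\}_{|t| \leq T}$ lies inside $F(B)$ up to bounded additive error: this is done by projecting $\{p_t\}$ onto each factor $\T(V)$ of the product region and choosing preferred paths closely approximating those projections, so that $\{p_t\}$ sits as a diagonal curve inside the $\xi$-dimensional image. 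Pick $a, b \in B$ with $F(a) \approx p_{-T}$ and $F(b) \approx p_T$. Applying \thmref{Thm:Differentiable} to the quasi-isometric embedding $\f = f \circ F$ yields a scale $R \geq R_0$ at which at most a $\theta$-fraction of the sub-boxes in $\calB_R$ fail to be $\ep$-efficient.

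Third, derive the contradiction. By \lemref{Lem:Path}, connect $a$ to $b$ through $\underline B$ by a path covered by $O(L/R)$ sub-boxes, of which at most $O\bigl(\sqrt[\xi]{\theta}\, L/R\bigr)$ are not $\ep$-efficient. Set $\pi := \pi_{\omega_0} \circ \pi_W \circ \f$. On each $\ep$-efficient sub-box $B'$, \thmref{Thm:Tunnel} gives $\diam \pi(B') \leq \sqrt{\ep}\,R$; on each inefficient sub-box, the quasi-Lipschitz property gives the trivial bound $\diam \pi(B') = O(R)$. Telescoping along the connecting path yields
\[
d_W\bigl(\pi(a), \pi(b)\bigr) \leq O(L)\bigl(\sqrt{\ep} + \sqrt[\xi]{\theta}\bigr).
\]
On the other hand, $\pi(a)$ and $\pi(b)$ lie on $\omega_0$ at parameters $\approx \pm T$, so $d_W(\pi(a), \pi(b)) \gmul T \emul L/\mul$. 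For $\ep$ sufficiently small these two bounds are incompatible, giving the desired contradiction; the constant $\distance_0$ is then chosen large enough to accommodate $L$ together with all additive errors.

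The main obstacle is the second step: arranging that the standard flat $F$ contains the pulled-back quasi-geodesic $\{p_t\}$. One must verify that $\{p_t\}$ remains in the product region $\T_\calV$ throughout the relevant parameter range, which depends on how deeply short the curves of $\calV$ are at $x$. The hypothesis $d(x, \LR) \geq \distance_0$, interpreted via the notion of isolated and adjacent curves from Definition \ref{Def:Adjacent}, forces the relevant $\tau_x(\alpha)$ to be large; combined with \thmref{Thm:Product-Regions} and the fact that quasi-geodesics in products of $\H$'s can be tracked diagonally by products of preferred paths, this supplies the required flat.
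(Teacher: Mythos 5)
Your overall strategy is the same as the paper's: pick a cobounded path through $f(x)$ relative to $W$, pull it back to a neighborhood of $x$, build a maximal-rank standard flat $F$ near $x$, apply coarse differentiation (\thmref{Thm:Differentiable}) plus \lemref{Lem:Path} to $\f = f\circ F$, and use \thmref{Thm:Tunnel} on efficient boxes to contradict the linear progress of the projection to $\calC(W)$. The telescoping estimate at the end matches the paper's Equations~\eqref{Eq:Efficient} and \eqref{Eq:Non-Efficient}.

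However, your second step contains a genuine gap. You require the standard flat $F$ to be built so that the entire pulled-back quasi-geodesic $\{p_t\}_{|t|\leq T}$ lies in $F(B)$ up to bounded error, and you justify this by asserting that ``quasi-geodesics in products of $\H$'s can be tracked diagonally by products of preferred paths.'' That assertion is false. A quasi-geodesic in $\prod \H$ with the sup metric is only quasi-Lipschitz in each factor; its projections can oscillate and need not be (reparametrized) quasi-geodesics at all. For example, the path in $\H\times\H$ through $(0,0)$, $(L,0)$, $(L,L)$, $(0,L)$, $(0,2L)$, $\ldots$ is a $(2,0)$-quasi-geodesic in the sup metric, yet its first coordinate oscillates and lies near no single geodesic. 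So a standard flat (a genuine product of preferred paths) cannot in general absorb the curve $\{p_t\}$, and the ``diagonal curve inside the $\xi$-dimensional image'' picture does not hold. Since you explicitly identify this as the main obstacle and resolve it with a false claim, this is a real error and not merely a stylistic choice.

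The fix is the observation the paper makes implicitly: you never need the flat to contain the pulled-back path, only its two endpoints. Take the single point $y$ with $f(y)$ at distance roughly $L$ along the cobounded path (the paper's choice, corresponding to your $p_{\pm T}$), observe that $x$ and $y$ both lie in a common maximal product region $\T_\calU$ because $d(x,\LR)$ and $d(y,\LR)$ are both large, and then build $Q = \prod_U Q_U \from B \to \T_\calU$ simply by choosing in each factor a geodesic through $x_U$ and $y_U$. Then $x,y\in Q(\underline B)$, $\f = f\circ Q$ is a qi embedding of a Euclidean box, and the rest of your argument goes through verbatim: \lemref{Lem:Path} produces a path in $B$ (not the pullback of $g_0$) between $Q^{-1}(x)$ and $Q^{-1}(y)$ with few inefficient boxes, and the contradiction is the same. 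Everything you wrote in steps one and three is correct and matches the paper; only the insistence that the flat track $\{p_t\}$ needs to be dropped.
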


\begin{proof}
Suppose by way of contradiction that, for large $\distance_0$ 
we have a point $x$ such that $d(x,\LR)\geq \distance_0$ but 
$f(x) \in \LR$. Then there is a subsurface $W$ with $\xi(W) \geq 2$ 
so that the boundary curves of $W$ are $\ell_0$ short but no curve in $W$ 
is shorter than $\ell_0$ in $f(x)$. 

Let $g_0$ be a path passing through $f(x)$ that is $\cb$--cobounded relative 
to $W$ (see the discussion after \defref{Def:CB}). Let $f(y)$ be a point in $g_0$ 
so that the distance in $\T$ between $f(x)$ and $f(y)$ is $L=\distance_0/2\mul-\add$.  
Since $g_0$ is $\cb$ cobounded,  we have from \eqnref{eq:distance}
\begin{equation} \label{Eq:W-Progress}
d_W\big( f(x), f(y) \big) \gmul L. 
\end{equation}
Since $f$ is a $(\mul,\add)$--quasi-isometry, solving \eqnref{Eq:QI} 
for $d(x,y)$ we get 
\[
\frac{\distance_0}{2\mul^2} - \frac{2\add}{\mul} \leq d(x, y) \leq \frac{\distance_0} 2.
\] 
In particular, $y\in \MR$.  

Note that, in addition, for $\distance_0\geq 2\Log \frac 1{\ell_0}$,  there is a 
maximal product region $\T_\calU$ containing both $x$ and $y$. 
For $d_\T(x, \LR) \geq \distance_0$ implies that there is a set of curves
$\balpha$, where $\tau_x(\alpha) \geq \distance_0$ for $\alpha \in \balpha$,
and so that  the complementary regions have complexity at most one. 
For all these curves, we have $\tau_y(\alpha) \geq \distance_0/2$; in particular 
they are at least $\ell_0$ short in $y$. This means, both $x$ and $y$ are in 
$\T_\calU$. 

In fact, there is a box 
\[
B = \prod_{U \in \calU} I_U \subset \R^\xi
\] 
of size $L$ and a quasi-isometry 
\[
Q = \prod_{U \in \calU} Q_U \from B \to \T
\] 
where each $Q_U \from I_U \to \T(U)$ is a geodesic and $x$ and $y$ are contained
in $Q(\underline B)$ (recall that $\underline B$ is the central sub-box of half 
the diameter). The map $Q$ is a quasi-isometry because $B$ is equipped with 
the Euclidean metric and $T_\calU$ is equipped with the sup metric up to an
additive error of $\Distance_0$. Define
\[
\f \from B \to \T, \qquad\text{by}\qquad \f = f \circ Q. 
\]
Then $\f$ is a $(K,C)$-quasi-isometric embedding where $K$ and $C$
depend on $\mul$, $\add$ and the complexity $\xi= |\calU|$. 
($\Distance_0$ depends only on these constants.)

Let $\omega_0$ be the geodesic in $\calC(W)$ that shadows the projection
$\pi_U(g_0)$. Then $\omega_0$ is $M$-cobounded with $M$ slightly larger than $\cb$. 
Define  
\[
\pi=\pi_{\omega_0} \circ \pi_U \circ \f.
\]
and let $l_\pi \emul \mul$ be the Lipschitz constant of $\pi$. 

Let $\ep$ and $R_0$ be constants from \thmref{Thm:Tunnel} associated to 
$K$, $C$ and $M$ and chose $\theta$ so that $\sqrt[\xi]{\theta} \, l_\pi$ is small
(see below). Then, let $L_0$ be the constant given by 
\thmref{Thm:Differentiable} (the dimension $n$ equals $\xi$). Choose 
$\distance_0$ large enough so that 
\[
L = d(x,y)\geq  \frac{\distance_0}{2\mul^2}-\frac{2 \add}\mul \geq L_0.
\] 
Applying \thmref{Thm:Differentiable} to $B$, we conclude that there is scale $R$
and a decomposition $\calB_R$ of $\underline B$ to boxes of size $R$ so that 
a proportion at least $(1-\theta)$ of boxes in $\calB_R$ are $\ep$--efficient.  

By \lemref{Lem:Path}, there exists a path $\gamma$ joining $x$ to $y$ that 
is covered by at most $O(L/R)$ boxes in $\calB_R$ of which at most
$O\left( \sqrt[\xi]{\theta} \frac{L}{R}\right)$ are not $\epsilon$--efficient.  

Assume $\gamma$ intersect boxes $B_1, \ldots, B_k$ and let
$\gamma_i$ be the subinterval of $\gamma$ associated to $B_i$. 
By the triangle inequality, the sum of the diameters of $\pi(\gamma_i)$ is 
larger than $d_W \big( f(x), f(y) \big)$. However, by \thmref{Thm:Tunnel}, 
\begin{equation} \label{Eq:Efficient} 
\sum_{B_i \ \text{is efficient}} \diam_{\calC(W)} \pi(\gamma_i) 
\lmul  \frac{L}{R} \sqrt \ep R = \sqrt{\ep} \, L.
\end{equation}
And the assumption on the number of non-efficient boxes gives
\begin{equation} \label{Eq:Non-Efficient} 
\sum_{B_i \ \text{is not efficient}} \diam_{\calC(W)} \pi(\gamma_i) 
\lmul \left( \sqrt[\xi]{\theta} \frac{L}{R} \right) l_\pi \, R = 
\sqrt[\xi]{\theta} \, l_\pi \, L. 
\end{equation}
For $\ep$ and $\theta$ small enough, Equations \eqref{Eq:Efficient} 
and \eqref{Eq:Non-Efficient} contradict \eqnref{Eq:W-Progress}. 
This finishes the proof. 
\end{proof}

\section{Local Splitting Theorem}
\label{Sec:Local}
In this section, we prove a local version of splitting theorem
proven by Kleiner-Leeb \cite{kleiner:RS}  and Eskin-Farb \cite{eskin:QF}.

\begin{theorem} \label{Thm:Factor-Preserving}
For every $K, C, \bar\rho$ there are constants $R_0$, $D$ and $\rho$ such that, for all 
\[
\bz=(z_1, \ldots, z_m) \in \prod_{i=1}^m \H_i
\] 
and $R \geq R_0$ the following holds. Let $B_R(\bz)$ be a ball of radius 
$R$ centered at $\bz$ in $\prod_{i=1}^m \H_i$ and let 
$\f \from B_R(\bz) \to \prod_{i=1}^m \H_i$ be a 
$(K, C)$--quasi-isometric embedding whose image coarsely contains  a ball of radius 
$\bar\rho R$ about $\bar f(\bz)$.   Then there is a smaller ball $B_{\rho R}(\bz)$,
a permutation $\sigma \from \{1, 2, \ldots, m\} \to \{1, 2, \ldots, m\}$ and
$(K, C)$--quasi isometric embeddings  
\[
\phi_i \from B_{\rho R}(z_i) \to \H_{\sigma(i)},
\]
so that the restriction of $\f$ to  $B_{\rho R}(\bz)$ is $D$--close to  
\[
\phi_1 \times \ldots \times \phi_m \from 
B_{\rho R} (\bz) \to \prod_{i=1}^m \H_{\sigma(i)}.
\]
\end{theorem}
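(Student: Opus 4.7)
The plan is to deduce this local splitting result from the global splitting theorem for quasi-isometries of $\prod_{i=1}^m \H$ due to Kleiner--Leeb and Eskin--Farb, via a rescaling/asymptotic-cone argument. Assume for contradiction that the conclusion fails for some fixed $(K,C,\bar\rho)$: then there exist sequences $R_n\to\infty$, centers $\bz_n$, and $(K,C)$--quasi-isometric embeddings $\f_n\from B_{R_n}(\bz_n)\to\prod_{i=1}^m \H_i$, each of whose images coarsely contains $B_{\bar\rho R_n}(\f_n(\bz_n))$, but such that no fixed $\rho>0$ and $D>0$ allow $\f_n$ to be $D$--close on $B_{\rho R_n}(\bz_n)$ to a product map for all large $n$.

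Now rescale the metric on both source and target by $1/R_n$ and pass to an ultralimit. Each factor $\H_i$ rescales to a homogeneous $\R$-tree $T_i$, and the sup product metric on $\prod_i \H_i$ rescales to the sup product metric on $\prod_i T_i$. The maps $\f_n$ converge to a bi-Lipschitz embedding $\f_\infty\from B_1(\bz_\infty)\to\prod_i T_i$ whose image contains $B_{\bar\rho}(\f_\infty(\bz_\infty))$; the additive quasi-constant $C$ vanishes in the limit, while the multiplicative constant $K$ survives as a bi-Lipschitz constant. The coarse-fullness hypothesis is essential here, as it guarantees that $\f_\infty$ is quantitatively non-degenerate and its image is top-dimensional.

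The key step is the splitting of $\f_\infty$: any bi-Lipschitz embedding of an open subset of $\prod_i T_i$ that fills a ball in the target must, after a permutation $\sigma$ of factors, split as a product $\phi_1^\infty\times\cdots\times\phi_m^\infty$. The mechanism is the metric characterization of maximal flats in $\prod_i T_i$ as top-dimensional isometric copies of $(\R^m,\|\cdot\|_\infty)$: each such flat is a product of one line from each factor, and a bi-Lipschitz embedding preserves the combinatorial pattern of how two such flats through a common point intersect in lower-dimensional sub-flats. Tracking this sharing pattern yields a well-defined permutation $\sigma$ and forces the map to factor. Pulling the splitting of $\f_\infty$ back through the ultralimit, and using pigeonhole on the finitely many permutations to fix a single $\sigma$ on an ultrafilter-large set of indices $n$, one obtains for all large $n$ an approximate splitting of $\f_n$ on a ball of radius $\rho R_n$ with additive error $D=D(K,C,\bar\rho)$, contradicting the failure hypothesis.

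The hard part is the splitting of $\f_\infty$ in the product-of-trees setting: the space $\prod_i T_i$ is not locally compact, so one cannot take tangent maps as in the smooth setting, and one must work purely with metric characterizations of singular sub-flats (those obtained by collapsing some factors to points). A secondary technical issue is the bookkeeping in unrescaling: one has to convert the bi-Lipschitz splitting at the asymptotic scale into a $(K,C)$--quasi-isometric splitting at finite scale with an explicit control on $\rho$ and $D$, which is where the ultrafilter pigeonhole on $\sigma$ and a standard diagonal extraction enter.
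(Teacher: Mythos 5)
Your proposal uses the same basic tool as the paper (ultralimits/asymptotic cones to reduce to a splitting theorem for bi-Lipschitz maps of products of $\R$-trees), but there are two genuine gaps, one technical and one structural.

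The technical gap is the choice of rescaling factor. You rescale by $1/R_n$, so the domain of the ultralimit map $\f_\infty$ is a ball of radius $1$ in $\prod_i T_i$, and you then need a \emph{local} splitting theorem: that a bi-Lipschitz embedding of an open subset of $\prod_i T_i$ whose image contains a ball must split. You flag this as ``the hard part,'' but you do not actually prove it, and it is not a result you can simply cite. The paper's Proposition~\ref{Prop:Linear-Factoring} avoids this: it argues by contradiction, produces for each $n$ a pair of witnessing points $\bx_n,\by_n\in B_{\rho_n R_n}(\bz_n)$ (with $\rho_n\to 0$), and rescales by $\lambda_n=1/d_\H(\bx_n,\by_n)$. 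Because $\rho_n\to 0$, one has $\lambda_n R_n\to\infty$, so the domain in the asymptotic cone is \emph{all} of $\prod_i T_i$ and $\f_\omega$ is a bi-Lipschitz \emph{homeomorphism} of the whole product of trees. That puts one squarely in the hypotheses of the global splitting argument of Kleiner--Leeb (Step~3 of \S9 of \cite{kleiner:RS}), which is what the paper cites; no new local splitting lemma is needed.

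The structural gap is more serious: an asymptotic-cone argument of the type you sketch cannot yield the \emph{uniform additive} error $D$ asserted in the theorem. If $\f_n$ on $B_{\rho R_n}(\bz_n)$ were $D_n$--far from every product map with $D_n\to\infty$ but $D_n/R_n\to 0$, your limit map $\f_\infty$ would still split perfectly and no contradiction is obtained. So the most one can extract from the cone argument is a factoring with error that is a small \emph{linear} function of the distance to the center (plus a constant), which is precisely the content of the paper's Proposition~\ref{Prop:Linear-Factoring}, Equation~\eqref{Eq:Linear-Error}. Upgrading that linear error to a uniform additive error on a yet smaller ball is a separate geometric step: the paper does it by showing (Lemma~\ref{lemma:trivial:flats}, via hyperbolicity of $\H$) that if a flat stays within $\epsilon R'$ of another flat over a large ball, it must stay within a fixed constant of it over a proportionally smaller ball, and then propagating this along geodesics in each factor to pin down a single flat $F'$ for each $F$ and hence a genuine factor-by-factor map. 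Your proposal does not contain this second stage at all, and without it the statement as written does not follow from the ultralimit argument.
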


\begin{remark} \label{Rem:Distance}
When we use this theorem in \secref{Sec:Local-Factors}, we need to equip 
$\prod \H$ with the $L^\infty$--metric. However, it is more convenient to use the 
$L^2$--metric for the proof. Note that, if $\f$ is a quasi-isometry with respect to one 
metric, it is also a quasi-isometry with respect to the other. For the rest of 
this section, we assume $\prod \H$ is equipped with the $L^2$--metric. 
To simplify notation, we use $d_\H$ to denote the distance in both in $\H$ and in 
$\prod \H$. 
\end{remark} 

For the proof, we will use the notion of an asymptotic cone. Our brief
discussion is taken from \cite{kleiner:RS}.  
A non-principal ultrafilter is a finitely additive probability measure
$\omega$ on the subsets of the natural numbers $\N$ such that
\begin{itemize}
\item $\omega(S)=0$ or $1$ for every $S\subset \N$ 
\item $\omega(S)=0$ for every finite subset $S\subset \N$
\end{itemize}
Given a bounded sequence $\{a_n\}$ in $\R$, there is a unique limit point 
$a_\omega \in \R$ such that, for every neighborhood $U$ of $a_\omega$, the set
$\{ n \st a_n \in U\}$ has full $\omega$ measure. We write 
$a_\omega = \olim a_n$. 

Let $(\calX_n,d_n,*_n)$ a sequence  of metric spaces with base-points. 
Consider
\[
\calX_\infty=\Big\{ \vec x = (x_1, x_2, \ldots ) \in \prod \calX_i:d(x_i,*_i) \text{is bounded}\Big\}.
\]
Define $\bar d_\omega:\calX_\infty \times \calX_\infty \to \R$ by 
\[
\bar d_\omega(\vec x, \vec y)=\olim d_i(x_i,y_i).
\]
Now $\bar d_\omega$ is a pseudo-distance. Define  the ultralimit of the sequence 
$(\calX_n,d_n,*_n)$ to be the quotient metric space  $(\calX_\omega,d_\omega)$ 
identifying the points of distance zero. 

Let $\calX$ be a metric space and $*$ be a basepoint. 
The asymptotic cone of $\calX$, $\Cone(\calX)$, 
with respect to the non-principal ultrafilter $\omega$ and the sequence $\lambda_n$ 
of scale factors with $\olim\lambda_n=\infty$ and the basepoint $*$,
is defined to be the ultralimit of the sequence of rescaled spaces
$(\calX_n,d_n,*_n):= (\calX,\frac{1}{\lambda_n}d_n,*)$. The asymptotic cone
is independant of the basepoint. 

In the case of $\H$, the asymptotic cone $\H_\omega$ is a metric tree which
branches at every point and the asymptotic cone 
$\big( \prod_{i=1}^m\H\big)_\omega$ of the product of hyperbolic planes is  
$\prod_{i=1}^m\H_\omega$, the product of the asymptotic cones. 
A flat in $\prod_{i=1}^m \H$ is a product $\prod_{i=1}^m g_i$ where $g_i$ 
is a geodesic in the $i^{th}$ factor.  

We first prove a version of \thmref{Thm:Factor-Preserving} with small linear 
error term. We then show that, by taking an even smaller ball, the error term
can be made to be uniform additive. 

\begin{proposition} \label{Prop:Linear-Factoring}
Given $K,C,\bar\rho$ there exists $\rho'>0$ and $D_0$ such that for all sufficiently small  $\epsilon>0$, 
there exists $R_0$ such that if $R\geq R_0$ and $\f$ is a $(K,C)$--quasi-isometric  
embedding defined on $B_R(\bz)$, such that 
$\f(B_R(\bz))$  $C$--coarsely contains $B_{\bar\rho R}(\f(\bz))$ then 
\begin{itemize}
\item There is a permutation $\sigma$ and, for $1 \leq i \leq m$, there is a 
quasi-isometric embedding $\phi_i^{\bz} \from B_{\rho' R}(z_i) \to \H_{\sigma(i)}$ 
so that, for 
\[
\phi^{\bz}=\phi_1^{\bz} \times \ldots \times \phi^{\bz}_m \from 
B_{\rho' R}(\bz) \to \prod_{i=1}^m \H,
\] 
and for $\bx \in B_{\rho' R}(\bz)$, we have 
\begin{equation} \label{Eq:Linear-Error} 
d_{\H}\big( \, \f(\bx),\phi^{\bz}(\bx) \big)\leq \epsilon d_\H(\bz,\bx)+D_0.
\end{equation}
\item For any  $\bx \in B_{\rho' R}(\bz)$ and any flat $F_\bx$ through $\bx$, there is 
a flat $F_\bx'$ such that for $\bp\in N_{\rho' R}(\bx)\cap F_\bx$
\[
d_\H \big( \, \f(\bp), F_\bx' \big)\leq \epsilon d_\H(\bx,\bp)+D_0.
\]
\end{itemize}
\end{proposition}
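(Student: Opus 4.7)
The plan is to argue by contradiction and pass to an asymptotic cone, where the splitting is forced by the tree structure of each factor, and then transfer the resulting factorization back to the original scale via the ultralimit, which automatically yields the linear-plus-constant error term in \eqref{Eq:Linear-Error}.

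In detail, I first fix $K,C,\bar\rho$ and suppose that for some $\epsilon>0$ no choice of $\rho'$ and $D_0$ works. Then for each $n$ I can find $R_n\to\infty$, base-points $\bz^{(n)}$, and $(K,C)$--quasi-isometric embeddings $\bar f_n\from B_{R_n}(\bz^{(n)}) \to \prod \H$ whose images $C$--coarsely contain $B_{\bar\rho R_n}(\bar f_n(\bz^{(n)}))$, but for which no product map $\phi^{\bz^{(n)}}$ on $B_{\rho' R_n}(\bz^{(n)})$ satisfies the stated linear estimate. I take a non-principal ultrafilter $\omega$ and rescale by $\lambda_n = R_n$, centering at $\bz^{(n)}$ in the source and at $\bar f_n(\bz^{(n)})$ in the target. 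The asymptotic cone of $\prod_{i=1}^m \H$ is $\prod_{i=1}^m T$, where $T$ is the homogeneous $\R$-tree obtained as the cone of $\H$, and the rescaled balls $B_1(\bz^{(n)})$ ultralimit to a unit ball in $\prod T$. The maps $\bar f_n$ assemble into a $K$--bi-Lipschitz embedding
\[
f_\omega \from B_1(\bz_\omega) \to \prod_{i=1}^m T,
\]
whose image coarsely contains $B_{\bar\rho}(f_\omega(\bz_\omega))$.

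The heart of the argument is the cone-level splitting. Flats in $\prod T$ are exactly products of bi-infinite geodesics in each factor, and such flats are characterized metrically as maximal bi-Lipschitz images of $\R^m$. Because each $T$ branches densely and $f_\omega$ is bi-Lipschitz, one shows as in Kleiner-Leeb \cite{kleiner:RS} and Eskin-Farb \cite{eskin:QF} that $f_\omega$ sends any local flat through a point of $B_1(\bz_\omega)$ into a flat; the coarse surjectivity assumption then promotes this to an honest splitting on a ball of some definite radius $\rho'=\rho'(K,C,\bar\rho)$ around $\bz_\omega$. Concretely, there is a permutation $\sigma$ and bi-Lipschitz embeddings $\phi^\omega_i \from B_{\rho'}(z_{i,\omega}) \to T_{\sigma(i)}$ such that $f_\omega = \phi^\omega_1 \times \cdots \times \phi^\omega_m$ on $B_{\rho'}(\bz_\omega)$. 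The main obstacle here is precisely this cone-level factoring statement for a \emph{ball} rather than for a global quasi-isometry; I would prove it by first showing that $f_\omega$ maps geodesic segments in the $i$--th factor into single factors of the target (using that tripods get sent to tripods and that the branch set of each $T$ is dense), then reading off $\sigma$ from which source factor lands in which target factor.

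Now I pull the factorization back. Set $\rho' R_n$--sized balls in the source, and for each $n$ define $\phi^{\bz^{(n)}}_i$ coordinate-wise by projecting $\bar f_n$ to the factor indexed by $\sigma(i)$ after restricting the input to a slice in the $i$--th direction through $\bz^{(n)}$. These are automatically $(K,C)$--quasi-isometric embeddings. If the conclusion \eqref{Eq:Linear-Error} failed, there would exist $\bx^{(n)} \in B_{\rho' R_n}(\bz^{(n)})$ with
\[
d_\H\bigl(\bar f_n(\bx^{(n)}),\phi^{\bz^{(n)}}(\bx^{(n)})\bigr) > \epsilon\, d_\H(\bz^{(n)},\bx^{(n)}) + n;
\]
rescaling and taking the ultralimit produces a point $\bx_\omega \in B_{\rho'}(\bz_\omega)$ at which $f_\omega$ disagrees with $\phi^\omega_1\times\cdots\times\phi^\omega_m$ by at least $\epsilon\, d(\bz_\omega,\bx_\omega) > 0$, contradicting the cone-level splitting. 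The constant $D_0$ absorbs bounded errors coming from the identification of the ultralimit maps with the $\phi^{\bz^{(n)}}_i$.

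For the second bullet, given a flat $F_\bx$ through $\bx\in B_{\rho' R}(\bz)$, the ultralimit of $F_\bx$ is a flat through $\bx_\omega$ in $\prod T$; its image under $f_\omega$ coincides, on a $\rho'$--ball, with the product of the images of the corresponding geodesics under $\phi^\omega_i$, hence lies in a flat $F'_{\bx_\omega}$ of $\prod T$. Selecting at finite scale a nearby flat $F'_\bx$ whose ultralimit is $F'_{\bx_\omega}$ and repeating the contradiction-plus-ultralimit argument yields the claimed bound $d_\H(\bar f(\bp),F'_\bx)\le \epsilon d_\H(\bx,\bp)+D_0$ on $N_{\rho' R}(\bx)\cap F_\bx$. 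The principal technical obstacle throughout is the cone-level splitting on a ball: this is where the tree structure of each factor and the coarse surjectivity hypothesis are essential, and where care is needed to ensure that $\rho'$ depends only on $K,C,\bar\rho$ and not on the particular sequence chosen.
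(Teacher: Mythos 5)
Your proposal diverges from the paper's argument in a way that leaves a genuine gap, and it misses the two devices the paper uses to avoid exactly that gap.

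The first device is the reduction \emph{before} passing to the cone. The paper first proves a Claim: if for some $\sigma$ the pointwise bound ``$\bx,\by$ differ in one factor $\Rightarrow$ $\f(\bx),\f(\by)$ differ in all factors other than $\sigma(i)$ by $\leq \tfrac{\epsilon d_\H(\bx,\by)+D_0}{m}$'' holds on $B_{\rho' R}(\bz)$, then the product factorization and \eqref{Eq:Linear-Error} follow by explicitly defining $\phi_i^{\bz}$ as the $\sigma(i)$th coordinate of $\f$ restricted to a slice, checking the two-sided quasi-isometry estimate, and applying the triangle inequality $m$ times. This reduces the proposition to a single pointwise estimate about pairs differing in one coordinate. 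Your proposal skips this reduction and tries to extract the factorization directly from the cone, asserting that the slice-wise $\phi_i^{\bz^{(n)}}$ ``are automatically $(K,C)$--quasi-isometric embeddings''; that is not automatic, and the lower bound on $d(\phi_i(x),\phi_i(x'))$ is precisely what the Claim's hypothesis supplies.

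The second, more serious issue is the rescaling. You rescale by $\lambda_n = R_n$, producing a bi-Lipschitz map $f_\omega$ defined only on a unit ball of $\prod T$. You then correctly identify that you now need a \emph{local} splitting theorem in the asymptotic cone --- a factoring statement for bi-Lipschitz embeddings of a ball in $\prod T$, not a global homeomorphism --- and you call this ``the main obstacle'' but do not resolve it. The paper sidesteps this entirely: it negates for a sequence $\rho_n \to 0$ and rescales by $\lambda_n = 1/d_\H(\bx_n,\by_n)$ where $\bx_n,\by_n \in B_{\rho_n R_n}(\bz_n)$ are the witnesses from the Claim. Because $\rho_n\to 0$ forces $\lambda_n R_n \geq 1/(2\rho_n) \to \infty$, the rescaled domains exhaust $\prod T$, and $f_\omega$ becomes a genuine bi-Lipschitz homeomorphism of $\prod T$ (here coarse surjectivity is used). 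This is where the already-proved Kleiner--Leeb global splitting applies directly; no new local splitting lemma is needed. Moreover, this scaling normalizes $d(\bx_\omega,\by_\omega)=1$, so the offending points survive in the cone; with your $\lambda_n=R_n$ scaling, counterexamples at $o(R_n)$ separation (including bounded separation with discrepancy $\to\infty$, which your negation permits) collapse to a single cone point and yield no contradiction. So even granting the cone-level ball splitting, the pull-back step would not close.

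In short: the skeleton (cone plus Kleiner--Leeb factoring plus pull-back) is right, but you need the paper's Claim to reduce to a one-factor pointwise estimate, and you need the $1/d(\bx_n,\by_n)$ rescaling with $\rho_n\to 0$ so that the cone map is globally defined and the witnesses are visible at unit scale; otherwise the argument stalls at an unproved local splitting lemma and fails to detect counterexamples at sub-$R_n$ scale.
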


\begin{proof}
We begin with a claim. 

\subsection*{Claim} Assume, for given $\ep$ and $D_0$, that there is a permutation 
$\sigma$ and a constant $\rho'$ so that, if $\bx, \by \in B_{\rho' R}(\bz)$ differ only in 
the $i^{th}$ factor, then $\f(\bx)$ and $\f(\by)$ differ in all factors besides the 
$\sigma(i)^{th}$ factor by at most $\frac{\ep d_\H(\by, \bx) +D_0}m$. Then the first 
conclusion holds. 

\begin{proof}[Proof of Claim] \renewcommand{\qedsymbol}{$\blacksquare$} 
For $x \in B_{\rho' R}(z_i)$ and an index $i$ define $\bz^i_{x}$ 
to be a point whose $i^{th}$ coordinate is $x$ and whose other coordinates are the 
same as the coordinates of $\bz$. We then define the map $\phi_i^{\bz}$ by letting 
$\phi_i^{\bz}(x)$ to be the $\sigma(i)^{th}$ coordinate of $\f(\bz^i_{x})$.  
We show that $\phi_i^{\bz}$ is a quasi-isometric embedding. 
For $x, x' \in B_{\rho' R}(z_i)$ 
\[
d_\H \big( \phi_i^\bz(x),  \phi_i^\bz(x')  \big) \leq
d_\H \big( \f (\bz^i_{x}) ,   \f (\bz^i_{x'}) \big) \leq 
K d_\H \big(\bz^i_{x}, \bz^i_{x'} \big) + C\leq
K d_\H \big(x, x' \big) + C.
\]
In addition since $\bz^i_{x}$ and $\bz^i_{x'}$ differ in only one factor, 
for $\ep \leq \frac 1{2K}$, 
\begin{align*}
d_\H \big( \phi_i^\bz(x),  \phi_i^\bz(x')  \big) 
 & \geq d_\H \big( \f (\bz^i_x),\f (\bz^i_{x'}) \big)
               -(m-1)\frac{\ep d_\H(\bz^i_x, \bz^i_{x'}) +D_0}m\\
 & \geq \frac{d_\H \big(\bz^i_{x}, \bz^i_{x'} \big)}K  - C 
               - \ep d_\H \big(\bz^i_{x}, \bz^i_{x'}\big) - D_0\\
  & \geq \frac{d_\H(x, x')}{2K} -  (C + D_0).       
\end{align*}

Hence, $\phi_i^{\bz}$ is a $(2K, C + D_0)$--quasi-isometry. 
\eqnref{Eq:Linear-Error} follows from applying the triangle inequality $m$--times. 
\end{proof} 

Now, suppose the first conclusion is false. Then there exists $K,C,\epsilon>0$, sequences 
$\rho_n\to 0, D_n\to\infty$,  and a sequence $\f_n$ of $(K,C)$--quasi-isometric 
embeddings defined on the balls $B_{R_n}(\bz_n)$, with $R_n \to \infty$, so that the 
restriction of $\f_n$ to $B_{\rho_n R_n}(\bz_n)$ does not factor as above. Then, by 
the above claim, there exists points $\bx_n, \by_n \in B_{\rho_n R_n}(\bz_n)$ 
which differ in one factor only, and such that $\f_n(\bx_n)$ and $\f_n(\by_n)$ differ in at 
least two factors by an amount that is at least $\frac{\epsilon d_\H(\by_n,\bx_n)+D_n}m$ 
in each.   We can   assume $d_\H( \by_n,\bx_n )\to\infty$ for otherwise  
$d_\H\big( \, \f(\by_n),\f(\bx_n)\big)$ is bounded.   

Let $\lambda_n=\frac{1}{d_\H(\by_n,\bx_n)}$ and scale the metric on $B_{R_n}(\bz_n)$ 
with base-point $\bz_n$ by $\lambda_n$.  Since $\rho_n \to 0$, we have 
$\lambda_n R_n \geq \frac 2{\rho_n} \to \infty$. That is, the radius of $B_{R_n}(\bz_n)$ in 
the scaled metric still goes to $\infty$. However in the scaled metric, the distance between 
$\bx_n$ and $\by_n$ equals $1$. Let $\prod_{i=1}^m \H_\omega$ be the 
the asymptotic cone of $\prod_{i=1}^m \H$ with base point $\bz_n$ and metric
$d_n = \lambda_n d_\H$. For any $(\bu_1,\bu_2\ldots )\in  \prod_{i=1}^m\H_\omega$,  
We define 
\[
\f_\omega\from \prod_{i=1}^m \H_\omega\to \prod_{i=1}^m \H_\omega.
\] 
by
\[
\f_\omega(\bu_1,\bu_2,\ldots)=\big( \, \f_1(\bu_1),\f_2(\bu_2),\ldots \big).
\]
Note that, since by definition $\lambda_n d_\H(\bu_n,\bz_n)$ is a bounded, 
for $n$ large enough, $\bu_n \in B_{R_n}(\bz_n)$ and $f_n(\bu_n)$ is defined. 
It is clear that $f_\omega$ is bi-Lipschitz. We show $\f_\omega$ is onto. 

By assumption $\f_n(B_{R_n}(\bz_n))$ $C$--coarsely contains 
$B_{\bar \rho R_n}(\f(\bz_n))$. Consider a point 
\[
(\bw_1,\bw_2,\ldots)\in \prod_{i=1}^m \H_\omega.
\]
Since $\lambda_n d_\H(\bz_n, \bw_n)$ is bounded, for $n$ large enough, 
$\bw_n \in B_{\bar \rho R_n}(\bz_n)$. This  means  there is $\bu_n \in B_{R_n}(\bz_n)$  
so that 
\[
d_\H \big(\, \f_n(\bu_n), \bw_n \big)  \leq C. 
\]
But $\lambda_n \to \infty$. Thus 
\[
(\bw_1,\bw_2,\ldots )=\f_\omega(\bu_1,\bu_2,\ldots)
\]
and so $\f_\omega$ is onto and hence a homeomorphism. 

By the argument in Step 3 of Section 9 in \cite{kleiner:RS} the map $\f_\omega$  factors.
The $\omega$-limit points of $\bx_n,\by_n$ give a pair of points  $\bx_\omega$ and  $\by_\omega$  
in $\prod_{i=1}^m \H_\omega$ that have the same coordinate in every factor but one and
\[
d_{\H_\omega} (\bx_\omega,\by_\omega)=1.
\]
But $\f_\omega(\bx_\omega)$ and $\f_\omega(\by_\omega)$ differ in at least two 
coordinates by at least $\frac\ep m$. This contradicts the assumption that $\f_\omega$ 
factors. 

We now use the first conclusion to prove the second conclusion. Let $r=\rho' R$. 
Consider a flat $F_\bx$ through $\bx$ and let $g_i = [a_i, b_i]$ be a geodesic in the 
$i^{th}$  factor so that
\[
F_\bx \cap B_{r}(\bx) \subset \prod_{i=1}^m [a_i,b_i]
\]  
Let $g_i'$ be the geodesic joining $\phi_i^{\bz}(a_i)$ to $\phi_i^{\bz}(b_i)$.   Since 
$\phi_i^{\bz}$ is a quasi-isometric embedding,    
\[
d_\H\big( \phi_i^{\bz}(g_i),g_i' \big)=O(1),
\]
where the bound depends on $K,C$.  Let $F_\bx'$ be the flat determined by the $g_i'$.  
For a point $\bp\in F\cap B_{r}(\bx)$, the $i^{th}$ coordinate of $\bp$ lies on $g_i$. 
Therefore, the $i^{th}$ coordinate of $\f(\bp)$ is distance at most $\epsilon d(\bx,\bp)$ 
from a point whose $i^{th}$ coordinate lies on $\phi_i^{\bz}(g_i)$ and that in turn is distance
$O(1)$ from $g_i'$. Since this is true for each $i$, the triangle inequality  implies that  
\begin{equation*}
d_\H \big(\, \f(\bp),F' \big) \leq \epsilon d_\H(\bx,\bp)+O(1). \qedhere 
\end{equation*}   
\end{proof}

\begin{lemma}
\label{lemma:trivial:flats}
Fix  a constant $\rho''<1$.  There exists $D''$, such that for all sufficiently small $\epsilon$ 
and large $R'$ the following holds. Suppose $F$, $F'$ are flats, $\bp \in F$ and 
\begin{displaymath}
B_{R'}(\bp) \cap F\subset \calN_{\ep R'}(F'). 
\end{displaymath}
Then for  $r\leq  \rho''R'$,
\begin{displaymath}
B_r(\bp) \cap F\subset  \calN_{D''}(F'). 
\end{displaymath}
\end{lemma}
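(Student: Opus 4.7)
Every flat in $\prod_{i=1}^m \H$ is a product of geodesics, so write $F = \prod_{i=1}^m g_i$ and $F' = \prod_{i=1}^m g_i'$, where each $g_i,g_i'$ is a complete geodesic in the $i$-th hyperbolic factor. Let $p_i$ denote the $i$-th coordinate of $\bp$, and parametrize $g_i$ by arc length with $g_i(0)=p_i$. The first step is to unpack the hypothesis coordinate by coordinate: if $t\in[-R',R']$, then the point $\bq\in F$ whose $i$-th coordinate is $g_i(t)$ and whose $j$-th coordinate is $p_j$ for $j\neq i$ lies in $B_{R'}(\bp)\cap F$, so there is $\bq'\in F'$ with $d_\H(\bq,\bq')\leq \ep R'$. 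Projecting onto the $i$-th factor gives $d_\H(g_i(t),g_i')\leq \ep R'$ for every $t\in[-R',R']$.

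The heart of the argument is a one-dimensional statement in the hyperbolic plane, which I would prove as a separate sublemma: \emph{given $\rho''<1$, there exists $D_0''$ such that for all sufficiently small $\ep$ and large $R'$, if $g,g'$ are geodesics in $\H$ with $d_\H(g(t),g')\leq \ep R'$ for all $|t|\leq R'$, then $d_\H(g(t),g')\leq D_0''$ for all $|t|\leq \rho''R'$.} This is the main obstacle and uses the exponential divergence of geodesics in negative curvature. To prove it, consider the function $h(t)=d_\H(g(t),g')$, which is convex and $1$-Lipschitz. If $g,g'$ have no common endpoint at infinity, the explicit formula $\sinh h(t) = \sinh(d_0)\cosh(t-t_0)$ (where $d_0$ is the minimal distance and $t_0$ is the perpendicular foot) gives $\sinh(d_0)\leq \sinh(\ep R')/\cosh(R'-|t_0|)$, which is exponentially small in $R'$; then for $|t|\leq \rho''R'$ one gets $\sinh h(t)\leq \sinh(\ep R')\cosh(\rho''R'+|t_0|)/\cosh(R'-|t_0|)\lesssim e^{(\ep+\rho''-1)R'}$, bounded once $\ep+\rho''<1$. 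If instead $g,g'$ share exactly one endpoint at infinity, then $h$ is monotonic with $h(t)\to 0$ in one direction and growing at most linearly with slope one in the other, and a similar computation using the explicit asymptotic formula for asymptotic geodesics bounds $h$ uniformly on $[-\rho''R',\rho''R']$. If $g=g'$ there is nothing to prove.

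Applying this sublemma in each factor, the geodesic $g_i$ lies within $D_0''$ of $g_i'$ on the interval $[p_i-\rho''R',p_i+\rho''R']$. Finally, for any $\bq\in B_r(\bp)\cap F$ with $r\leq \rho''R'$, each coordinate $q_i$ satisfies $|q_i-p_i|\leq r\leq \rho''R'$, so there is $q_i'\in g_i'$ with $d_\H(q_i,q_i')\leq D_0''$. Setting $\bq'=(q_1',\ldots,q_m')\in F'$ gives
\[
d_\H(\bq,\bq')=\Bigl(\sum_{i=1}^m d_\H(q_i,q_i')^2\Bigr)^{1/2}\leq \sqrt{m}\,D_0'',
\]
so the lemma holds with $D''=\sqrt{m}\,D_0''$. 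The whole argument is really just the hyperbolic sublemma plus the product structure of flats; the potential pitfall is making sure the sublemma is uniform in the configuration of $g$ and $g'$ (intersecting, asymptotic, or ultra-parallel), which is handled by the case analysis above.
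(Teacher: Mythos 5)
Your overall reduction is the same as the paper's: work factor by factor in $\prod\H$, reduce to a one-dimensional statement about a pair of geodesics in a single $\H$, then assemble via the $L^2$ product structure. Where you diverge is the proof of the one-dimensional sublemma --- explicit hyperbolic trigonometry where the paper uses a $2\delta$-thin quadrilateral argument --- and as written your computation has two gaps. First, the formula $\sinh h(t)=\sinh(d_0)\cosh(t-t_0)$ is only valid when $g$ and $g'$ are ultra-parallel (disjoint and not asymptotic); if they cross at angle $\theta$ at $g(t_0)$ the correct formula is $\sinh h(t)=|\sin\theta|\,|\sinh(t-t_0)|$, which is not obtained by setting $d_0=0$, so your ``no common endpoint'' case really splits into two cases and you have only treated one of them. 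Second, the inequality $\sinh(d_0)\le\sinh(\ep R')/\cosh(R'-|t_0|)$ comes from evaluating the hypothesis at the endpoint of $[-R',R']$ nearer to $t_0$; the resulting quotient $\cosh(\rho''R'+|t_0|)/\cosh(R'-|t_0|)$ carries a factor of roughly $e^{2|t_0|}$, and since $|t_0|$ is not a priori bounded in terms of $R'$ (the common perpendicular can lie far outside $[-R',R']$ when $d_0$ is small), the claimed bound $\lesssim e^{(\ep+\rho''-1)R'}$ does not follow. You should instead evaluate at the endpoint of $[-R',R']$ farther from $t_0$, giving $\sinh(d_0)\le\sinh(\ep R')/\cosh(R'+|t_0|)$; then $\cosh(\rho''R'+|t_0|)/\cosh(R'+|t_0|)\lesssim e^{(\rho''-1)R'}$ uniformly in $t_0$ and the estimate closes. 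With these two repairs your argument works. It is worth noting that the paper's thin-quadrilateral route avoids both issues at once: $\delta$-hyperbolicity is insensitive to whether the geodesics meet, are asymptotic, or are ultra-parallel, and requires no identification of a point of closest approach.
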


\begin{proof}
We can assume $\epsilon R'$ is larger than the hyperbolicity constant for $\H$.  
Consider any geodesic    $\gamma\subset F$ whose projection to each factor $\H$ 
intersects the disc of radius $r<\rho''R'$ centered at the projection of $\bp$ to that factor. 
Extend the geodesic  so that its endpoints lie  further than $\epsilon R'$ from the disc.  
Choose a pair of points in $F'$ within $\epsilon R'$ of the endpoints of $\gamma$ and 
let $\gamma'$ be the geodesic in $F'$ joining these points.  Then  $\gamma'$ lies within 
Hausdorff distance $\epsilon R'$ of $\gamma$. Form the quadrilateral with two additional 
segments joining the endpoints of $\gamma$ and $\gamma'$. Since the  endpoints are 
within $\epsilon R'$ of each other, the segments joining the endpoints do not enter the 
disc of radius $r$. The quadrilateral is $2\delta$ thin, where $\delta$ is the hyperbolicity 
constant for $\H$.  Therefore   $\gamma$ and $\gamma'$ are within Hausdorff distance 
$O(2\delta)$ of each other on the disc of radius $r$.   
\end{proof}

\begin{proof}[Proof of \thmref{Thm:Factor-Preserving}.]
By the second conclusion of \propref{Prop:Linear-Factoring}  there exists $\rho',D_0$ 
so that for all small $\ep$ and large $R$, for any  $\bx \in B_{\rho' R}(\bz)$ and any flat 
$F$ through $\bx$, there is a flat $F_{\bx}'$ such that for $\bp\in F\cap B_{\rho' R}(\bx)$
\[
d_\H \big( \, \f(\bp), F_{\bx}' \big)\leq \epsilon d(\bx,\bp)+D_0.
\]
Now clearly for all $\bx$, $d(\f(\bx),F_{\bx}')\leq D_0$. 
By \lemref{lemma:trivial:flats} there is $\bar D=\bar D(D_0,D'')$ and $\rho''$, such that for 
$\epsilon$ sufficiently small and $R$ large, and any pair of points 
\[
\bx,\bp\in F\cap B_{\rho''\rho'R}(\bz),
\] 
the flats $F_{\bp}',F_{\bx}'$ corresponding to $\bp$ and $\bx$ 
satisfying the above inequality,   are within $\bar D$ of each
other. That is,  given $F$ there is a single flat $F'$ so that for all 
$\bx\in F\cap B_{\rho''\rho' R}(\bz)$,  
\[
d_\H \big(\, \f(\bx),F' \big)\leq \bar D.
\]
 
Consider any  geodesic $g_i$ in the $i^{th}$ factor of $\prod_{i=1}^m\H$ that intersects 
$B_{\rho''\rho' R}(z_i)$ and fix the other coordinates so that we have   a geodesic in 
$\prod_{i=1}^m \H$ that intersects $B_{\rho''\rho' R}(\bz)$.  Again denote it by  $g_i$.   
Choose a pair of flats $F_1,F_2$ that intersect exactly along $g_i$. Then as we have 
seen there are flats $F_1',F_2'$  such that for $j=1,2$, 
\[
\f \big( F_j\cap B_{\rho''\rho' R}(\bz_0) \big) \subset \calN_{\bar D} (F_j')
\] 
and thus for both $j=1,2$,  
\[
\f \big( g_i\cap B_{\rho''\rho' R}(\bz_0) \big)\subset \calN_{\bar D}(F_j').
\]
Since $\f$ is a quasi-isometric embedding  the pair  of flats $F_1',F_2'$ must come 
$O(\bar D)$ close along a single geodesic of length  comparable to $R$ in  one factor 
in  each. Thus we can assume that $\f$ factors along $g_i$ and sends its intersection 
with $B_{\rho''\rho' R}(\bz_0)$  to within $O(\bar D)$ of a geodesic $g_j'$ in a factor $j$.   

Now let $\rho'''<1$ and set $\rho=\rho'''\rho''\rho'$.  
Now consider  {\em any}  geodesic  $g$ in the $i^{th}$  factor that intersects the smaller 
ball $B_{\rho R}(\bz_0)$.  We have that  $\f$ factors  in the bigger ball 
$B_{\rho''\rho' R}(\bz_0)$ along   $g\cap B_{\rho''\rho' R}(\bz_0)$.  We claim that it 
also sends it also to the same $j^{th}$ factor.  Suppose not, and it sends it to the 
$k\neq j$ factor.  Choose  a geodesic $\ell$ that comes close to both $g_i$ and $g$ 
possibly in the bigger ball $B_{\rho''\rho' R}(\bz_0)$. Its image  must change from the 
$j^{th}$ to the $k^{th}$ factor, which is impossible since the image of 
$\ell\cap B_{\rho''\rho' R}(\bz_0)$ lies in a single factor up to bounded error.    
Thus the map is factor preserving. 
\end{proof}

\section{Local Factors in Teichm\"uller space} \label{Sec:Local-Factors} 
In this section we apply \thmref{Thm:Factor-Preserving} to balls in \Teich space.

For a given $R>0$ and $x \in \T$, define the $R$--decomposition 
at $x$ to be the decomposition $\calU$ that contains a curve $\alpha$
if and only if $\tau_x(\alpha) \geq R$. That is, elements of $\calU$ are
either such curves or their complementary components. 
By convention, if $\T = \T(\Sigma, L)$ and $\Sigma$ has a component
$U$ with $\xi(U)=1$ then $U$ (not any annulus in $U$) is always included in any 
$R$--decomposition of $\Sigma$. This is because $\T(U)$ is already a copy 
of $\H$. An $R$-decomposition is maximal if there are no complimentary components $W$ with $\xi(W)>1$.  We always assume $f \from \T \to \T$ is a $(\mul, \add)$--quasi-isometry
but, unless specified, it is not always assumed that $f$ is anchored.

\begin{proposition} \label{Prop:Local-Factors}
For $\mul$ and $\add$ as before, there are constants $0<\rho_1 <1$, 
$\distance_1, \add_1$ and $\Distance_1$ so that the following holds. 
For $x \in \MR$ let $R$ be such that 
\[
d(x, \LR) \geq R \geq \distance_1.
\]
Let $\calU$ be the $R$--decomposition at $x$, let $\calV$ be the $\frac{R}{2\mul}$--decomposition at $f(x)$ and let $r = \rho_1 R$. We have
\begin{enumerate}
\item The decompositions $\calU$ and $\calV$ are maximal. 
For $x'\in B_r(x)$, we have $x' \in \T_\calU$ and $f(x') \in \T_\calV$.
\item There is a bijection $f^\star_x \from \calU \to \calV$ and, for every 
$U \in \calU$ and $V=  f^\star_x(U)$, there is a $(\mul, \add_1)$--quasi-isometry 
\[
\phi_x^U \from B_r(x_U) \to \T\big(V \big),
\]
so that, for all $x'\in B_r(x)$,
\[
d_{\T(V)} \Big( \phi_x^U(x'_U), f(x')_V \Big) \leq \Distance_1.
\]
\end{enumerate}
\end{proposition}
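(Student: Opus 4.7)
The plan is to restrict $f$ to a ball where the Product Regions Theorem applies on both sides, identify the restriction as a quasi-isometric embedding between subsets of $\prod_{i=1}^\xi \H$, and then invoke the local splitting Theorem~\ref{Thm:Factor-Preserving} to extract the factoring.

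\textbf{Setting up the product regions.} I choose $\distance_1 \geq \distance_0$ so that Proposition~\ref{Prop:Rank-Preserved} applies, yielding $f(x) \in \MR$. Since $x \in \MR$ with $d_\T(x, \LR) \geq R$, the remark following Definition~\ref{Def:Adjacent} shows that every isolated short curve $\alpha$ satisfies $\tau_x(\alpha) \geq R$ and every adjacent pair contains one such curve; consequently the curves in the $R$-decomposition $\calU$ cut $S$ into components of complexity at most one, so $\calU$ is maximal. The same reasoning at $f(x)$ makes $\calV$ maximal. I pick $\rho_1 < 1/(2\mul^2)$ and take $\distance_1$ large enough that $\mul r + \add < R/(2\mul)$ for $r = \rho_1 R$. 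Then for any $x' \in B_r(x)$, every $\alpha \in \calU$ has $\tau_x(\alpha) \geq R > r$, so $\Ext_{x'}(\alpha) \leq \ell_0$ and hence $x' \in \T_\calU$; moreover $d_\T(f(x'), f(x)) \leq \mul r + \add < R/(2\mul) \leq \tau_{f(x)}(\beta)$ for every $\beta \in \calV$, so $f(x') \in \T_\calV$.

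\textbf{Applying the splitting theorem.} Maximality of $\calU, \calV$ and the Product Regions Theorem give coarse isometries with additive error $\Distance_0$ from $\T_\calU$ and $\T_\calV$ to $\prod^\xi \H$. Conjugating $f|_{B_r(x)}$ by these produces a $(\mul, \add + O(\Distance_0))$-quasi-isometric embedding $\tilde f$ from a ball of radius $\simeq r$ in $\prod^\xi \H$ into $\prod^\xi \H$. To verify the coarsely-onto hypothesis, I set $\bar\rho = 1/(2\mul)$, pull back an arbitrary $\by \in B_{\bar\rho r}(\tilde f(\psi_\calU(x)))$ to $y \in \T_\calV$, and use coarse surjectivity of $f$ on $\T$ to produce $x_0 \in \T$ with $d_\T(f(x_0), y) \leq \add$. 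The quasi-isometry lower bound combined with $d_\T(y, f(x)) \leq \bar\rho r + O(\Distance_0)$ gives $d_\T(x_0, x) \leq \mul \bar\rho r + O(1) < r$ for $\distance_1$ large, so $x_0 \in B_r(x) \subset \T_\calU$; the same kind of estimate places $f(x_0) \in \T_\calV$, so $\tilde f(\psi_\calU(x_0))$ is defined and lies within a bounded distance of $\by$.

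\textbf{Extracting the factoring.} Theorem~\ref{Thm:Factor-Preserving} now yields a shrinkage constant $\rho' \in (0,1)$, an additive constant $D$, a permutation $\sigma$ of $\{1, \ldots, \xi\}$, and quasi-isometric embeddings $\phi_i \from B_{\rho' r}(x_{U_i}) \to \H_{\sigma(i)}$ whose product is $D$-close to $\tilde f$. Translating back through the coordinate maps of the Product Regions Theorem turns each $\phi_i$ into the required $\phi_x^{U_i} \from B_{\rho' r}(x_{U_i}) \to \T(V_{\sigma(i)})$, and the correspondence $U_i \mapsto V_{\sigma(i)}$ is precisely the bijection $f^\star_x$. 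Replacing $\rho_1$ by $\rho_1 \rho'$ and setting $\add_1$ and $\Distance_1$ to absorb the constants from the Product Regions Theorem and from Theorem~\ref{Thm:Factor-Preserving} completes the proof. The main obstacle is the bookkeeping of radii and additive errors so that the three ingredients (Product Regions, Proposition~\ref{Prop:Rank-Preserved}, and Theorem~\ref{Thm:Factor-Preserving}) all apply on the same ball $B_{\rho_1 R}(x)$; once the constants are threaded consistently, the factoring, the bijection $f^\star_x$, and the closeness of $\phi_x^U$ to the coordinate projection of $f$ follow directly from the local splitting theorem.
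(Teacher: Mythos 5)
Your overall strategy (use Proposition~\ref{Prop:Rank-Preserved}, pass to the product regions, invoke Theorem~\ref{Thm:Factor-Preserving}) is exactly the paper's, and your verification of the coarsely-onto hypothesis for Theorem~\ref{Thm:Factor-Preserving} is actually more explicit than the one-line remark in the paper. However, there is a genuine gap in your argument for the maximality of $\calV$.

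You write that ``the same reasoning at $f(x)$ makes $\calV$ maximal,'' but the reasoning you used for $\calU$ required the quantitative hypothesis $d_\T(x,\LR)\ge R$. The analogous statement you need is $d_\T\big(f(x),\LR\big)\ge \frac{R}{2\mul}$, and you never establish it. Knowing only that $f(x)\in\MR$ from Proposition~\ref{Prop:Rank-Preserved} tells you that the $\ell_0$--short curves at $f(x)$ cut the surface into complexity-one pieces; it does \emph{not} tell you that the smaller collection of curves $\beta$ with $\tau_{f(x)}(\beta)\ge \frac{R}{2\mul}$ already does so, which is what maximality of the $\frac{R}{2\mul}$--decomposition $\calV$ means. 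The paper fills this in by a separate argument: take $y\in\LR$ realizing $d_\T\big(f(x),\LR\big)$, apply Proposition~\ref{Prop:Rank-Preserved} to $f$ at $f^{-1}(y)$ to get $d_\T\big(f^{-1}(y),\LR\big)\le\distance_0$, and then the triangle inequality together with the quasi-isometry inequality for $f^{-1}$ gives $R\le\mul\, d_\T\big(f(x),\LR\big)+\add+\distance_0$, hence $d_\T\big(f(x),\LR\big)\ge\frac{R}{2\mul}$ once $\distance_1$ is chosen large enough. You need to add this step; without it neither the maximality of $\calV$ nor the later identification of $\T_\calV$ with a sup-metric product of $\xi$ copies of $\H$ (needed so that Theorem~\ref{Thm:Factor-Preserving} applies with equal numbers of factors on both sides) is justified.

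One further small point to tidy: after you produce $x_0\in B_r(x)$ in the onto-ness check, you need $\psi_\calU(x_0)$ to land inside the domain $\prod_U B_r(x_U)$ of $\tilde f$; this follows from the Product Regions Theorem at the cost of a $\Distance_0$ adjustment in the radius, so it is harmless but worth stating when you thread the constants.
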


\begin{remark}
Condition (2) above states that the map $f$ restricted to $B_r(x)$ is close to the 
product map $\prod_{U \in \, \calU} \phi_x^U$. Note also that, since $\calU$
depend on $R$, $\fs_x$ also depends on $R$. 
\end{remark}

\begin{proof}
Let $\balpha$ be the set of curves $\alpha$ with $\tau_X(\alpha)\geq R$.
Since $d_\T(x,\LR) \geq R$ any complementary component $U$ of $\balpha$ must satisfy 
$\xi(U)\leq 1$. Otherwise there would be a pair of adjacent curves 
$\gamma_1,\gamma_2\subset U$ with $\tau_x(\gamma_i)< r$, which means 
that $d_\T(x,\LR)<r$.  We conclude that $\calU$ is a maximal decomposition. 
 
Let $ \rho_0 = \frac{1}{5 \mul^2}$. For $x'\in B_{\rho_0 R}(x)$, we have 
\[
\tau_{x'}(\alpha)\geq \tau_{x}(\alpha) - \rho_0 R \geq (1-\rho_0)R.
\]  
This, if $\distance_1$ is large enough, implies that the curves $\alpha\in \balpha$ are 
$\ell_0$--short in $x'$ and so $x' \in \T_\calU$.

Let $y \in \LR$ be a point such that 
\[
d_\T\big( f(x),y \big)=d_\T\big( f(x),\LR \big).
\] 
By \propref{Prop:Rank-Preserved}, 
\[
d_\T \big(f^{-1}(y), \LR \big) \leq \distance_0.
\] 
Using first the triangle inequality and then the fact that $f^{-1}$ is a 
$(\mul,\add)$--quasi-isometry, we get
\[
R=d_\T(x,\LR)\leq d_\T \big(x,f^{-1}(y)\big)+\distance_0
    \leq \mul d_\T\big(f(x),\LR\big)+\add+\distance_0.
\] 
By picking  $\distance_1$ large enough in terms of $\distance_0,\mul,\add$, we have 
\begin{equation} \label{Eq:2K}
d_\T \big( f(x),\LR \big) \geq \frac{R}{2\mul}.
\end{equation}
This means, as argued above, that if $\bbeta$ is the collection of curves $\beta$
with $\tau_{f(x)}(\beta)\geq \frac{R}{2\mul}$, then any complementary components 
$V$ satisfies $\xi(V)\leq 1$. Hence $\calV$ is a maximal decomposition. 

Again, since $f$ is a  $(\mul,\add)$--quasi-isometry, for all $\beta \in \bbeta$, we have 
\begin{align*}
|\tau_{f(x)}(\beta)-\tau_{f(x')}(\beta)| & \leq d(f(x),f(x')) \leq \mul d(x,x')+\add \\
  &\leq \mul (\rho_0 R)+\add\leq \frac{R}{5\mul}+\add\leq \frac{R}{4\mul}.
\end{align*}
The last inequality holds for $\distance_1$ large enough. Thus, 
for $x'\in B_{\rho_0 R} (x)$ and $\beta\in\bbeta$,
\[
\tau_{f(x')}(\beta)\geq \tau_{f(x)}(\beta)-\frac{R}{4\mul}
      \geq \frac{R}{2\mul}-\frac{R}{4\mul}.
\] 
Again, for $\distance_1$ large enough, this means $\beta$ is $\ell_0$--short
and thus $f(x')\in \T_{\calV}$.  

We have shown that $x'\in B_{\rho_0 R}(x)$ implies $x'\in\T_{\calU}$ and $f(x')\in\T_\calV$. 
By the Minsky Product Region Theorem (\thmref{Thm:Product-Regions}) 
the maps $\psi_\calU$ and $\psi_\calV$ are distance $\Distance_0$ from an isometry. 
Define 
\[
\f \from \prod_{U\in \, \calU}B_{\rho_0 R}(x_U) \to \prod_{V\in \calV} \T(V),
\]
by
\[
\f = \psi_\calV \circ f \circ \psi_\calU^{-1}. 
\]
Then, if we set $\add_1=2\Distance_0+\add$, the map $\f$ is a 
$(\mul,\add_1)$--quasi-isometry.   

Because the map $f$ has an inverse,  $\f(B_{\rho_0 R}(\bx)$ contains a ball of comparable radius about $\f(\bx)$. 
Now \thmref{Thm:Factor-Preserving} applied for $K = \mul$ and $C = \add_1$
says that, there are constants $R_0$, $\rho$, $D$ and a bijection $f_x^* \from \calU\to\calV$ 
so that, for $r = \rho \, \rho_0 R$ the following holds. Assume $\distance_1 \geq R_0$. 
Then, for each $U\in\calU$ and  $V = f_x^*(U)$, there is a $(\mul,\add_1)$--quasi-isometry  
\[
\phi_x^U \from B_r(x_U)\to \T\big(V\big)
\]
such that  for $x'\in B_r(x)$,  
\[
d_{\T_\calV} \left( \f \big( \psi_{\calU}(x') \big), 
        \prod_{U\in \calU}\phi_x^U(x'_U) \right)\leq D.
\]
The distance in $\T_\calV$ is the sup metric, each factor of which is 
either a copy of $\H$ or a horosphere $H_\beta \subset\H$.  
Hence, the inequality holds for every factor. 
\[
d_{\T(V)} \Big( f\big( \psi_{\calU}(x') \big)_V, 
       \phi_x^U(x'_U) \Big)\leq D.
\]
Therefore, for $\rho_1 = \rho \, \rho_0$, $\Distance_1 = D$ and $\distance_1$ 
large enough, the proposition holds.
\end{proof}

\begin{proposition} \label{Prop:Analytic-Continuation} 
Choose $R$ so that 
\[
r = \rho_1 R \geq \max \big( \rho_1\distance_1, 4\mul(4 \Distance_1 + \add_1) \big).
\]  
Let $x^1, x^2 \in \T$ be points so that 
\[
\d_\T\left(x^1, \LR\right) \geq R, \quad
\d_\T\left(x^2, \LR\right) \geq R
\quad\text{and}\quad
d_\T\left(x^1, x^2\right) \leq r.
\]
Assume every point $x \in B_r(x^1) \cup B_r(x^2)$ has the same 
$R$--decomposition $\calU$ and the $\frac{R}{2\mul}$--decomposition 
at $f(x)$ always contains some subsurface $V$. 
Then, there is $U \in \calU$ so that 
\[
f_{x^1}^\star(U) = f_{x^2}^\star(U) = V,
\]
and, for every $u \in B_r(x^1_U) \cap B_r(x^2_U)$
\[
 d_{\T(V)}\Big( \phi_{x^1}^U(u), \phi_{x^2}^U(u) \Big) \leq 2 \Distance_1. 
\]
\end{proposition}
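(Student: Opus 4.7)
The plan is to apply \propref{Prop:Local-Factors} separately at $x^1$ and $x^2$ and then show that the preimages of $V$ under $\fs_{x^1}$ and $\fs_{x^2}$ must coincide. For each $i\in\{1,2\}$, \propref{Prop:Local-Factors} produces a subsurface $U^i\in\calU$ with $\fs_{x^i}(U^i)=V$ together with a $(\mul,\add_1)$--quasi-isometry $\phi_{x^i}^{U^i}\from B_r(x^i_{U^i})\to\T(V)$ satisfying
\[
d_{\T(V)}\Big(\phi_{x^i}^{U^i}(x'_{U^i}),\,f(x')_V\Big)\leq\Distance_1
\]
for every $x'\in B_r(x^i)$. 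There are two things to prove: first, that $U^1=U^2$, and second, the $2\Distance_1$ estimate. Once $U^1=U^2=U$ is known, the estimate is immediate: given $u\in B_r(x^1_U)\cap B_r(x^2_U)$, define $x'\in\T_\calU$ by $x'_U=u$ and $x'_W=x^1_W$ for all other $W\in\calU$. Working in the sup metric on the product region (and absorbing the additive error $\Distance_0$ of \thmref{Thm:Product-Regions} into our constants), the hypothesis $d_\T(x^1,x^2)\leq r$ gives $x'\in B_r(x^1)\cap B_r(x^2)$, and the two displayed bounds add via the triangle inequality to the desired $2\Distance_1$.

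To prove $U^1=U^2$ I would argue by contradiction, using a one-variable perturbation in the $U^1$ direction. Suppose $U^1\neq U^2$. For each $W\in\calU$, let $m_W\in\T(W)$ be a midpoint of a geodesic from $x^1_W$ to $x^2_W$, so $d(m_W,x^i_W)\leq r/2$. Since $\T(U^1)$ is isometric to a copy of $\H$, one can pick points $u_a,u_b$ on a geodesic through $m_{U^1}$ with $d(u_a,m_{U^1})=d(u_b,m_{U^1})=r/3$ and $d(u_a,u_b)=2r/3$; these automatically lie in $B_r(x^1_{U^1})\cap B_r(x^2_{U^1})$. Form points $x'_a,x'_b\in\T_\calU$ whose $U^1$-coordinates are $u_a$ and $u_b$ respectively and whose remaining coordinates all equal the various $m_W$. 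Then $x'_a,x'_b\in B_r(x^1)\cap B_r(x^2)$, and they share the same $U^2$-coordinate. Applying the estimate from \propref{Prop:Local-Factors} at $x^1$ with factor $U^1$ and at $x^2$ with factor $U^2$ to both $x'_a$ and $x'_b$, and using that $\phi_{x^2}^{U^2}(x'_{a,U^2})=\phi_{x^2}^{U^2}(x'_{b,U^2})$, four triangle-inequality steps yield
\[
d_{\T(V)}\Big(\phi_{x^1}^{U^1}(u_a),\,\phi_{x^1}^{U^1}(u_b)\Big)\leq 4\Distance_1.
\]
On the other hand, the quasi-isometry lower bound gives this distance at least $\tfrac{2r}{3\mul}-\add_1$. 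Combined with the hypothesis $r\geq 4\mul(4\Distance_1+\add_1)$, these are incompatible, forcing $U^1=U^2$.

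The main obstacle I anticipate is not logical but bookkeeping: the product region theorem identifies a neighborhood of $x^i$ with a sup-metric product only up to the additive error $\Distance_0$, and the constant $\add_1$ in \propref{Prop:Local-Factors} has already absorbed such errors once. One must check that each use of the sup-metric formula for distances, and the construction of $x'_a,x'_b$ inside $B_r(x^1)\cap B_r(x^2)$, remains valid after accounting for $\Distance_0$. The numerical assumption $r\geq 4\mul(4\Distance_1+\add_1)$ supplies precisely the slack needed for the linear-in-$r$ growth coming from the quasi-isometry lower bound on $\phi_{x^1}^{U^1}$ to dominate all the fixed additive terms.
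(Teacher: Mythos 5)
Your proof is correct, and the two arguments share the same second half (the $2\Distance_1$ estimate, obtained by constructing a single $x'$ with $x'_U=u$ lying in both balls and combining the two instances of Proposition~\ref{Prop:Local-Factors} by the triangle inequality). Where you differ from the paper is in the key step, showing $U^1=U^2$. The paper isolates a small ``continuity'' claim: if $z^1,z^2\in B_r(x^1)\cap B_r(x^2)$ either differ by $r/4$ in exactly one factor or agree in exactly one factor, then $\fs_{z^1}$ and $\fs_{z^2}$ assign the same image to that factor. It then picks a midpoint configuration $z_W$ and chains through four intermediate points $x^1\to z^1\to z^{1,2}\to z^2\to x^2$, invoking the claim at each step. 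Your argument collapses this to a single contradiction: assuming $U^1\neq U^2$, you perturb only the $U^1$-coordinate at the midpoint to produce $x'_a,x'_b$; since $x'_a,x'_b$ share the same $U^2$-coordinate, the $\phi_{x^2}^{U^2}$-side contributes nothing, the four applications of Proposition~\ref{Prop:Local-Factors} give the $4\Distance_1$ upper bound, and the quasi-isometry lower bound for $\phi_{x^1}^{U^1}$ gives $\frac{2r}{3\mul}-\add_1$, incompatible with the stipulated $r\geq 4\mul(4\Distance_1+\add_1)$. Both approaches rest on exactly the same two inputs -- the $\Distance_1$-approximation of Proposition~\ref{Prop:Local-Factors} and the lower quasi-isometry bound for the local factor maps -- and the same threshold for $r$; yours gives a single one-shot contradiction, while the paper's auxiliary claim is a slightly more modular formulation of the same comparison. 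The bookkeeping caveat you flag (the $\Distance_0$ slack from the Product Regions Theorem when building points in coordinates and placing them in the balls) is real but benign and is implicitly absorbed in the paper as well; your $5r/6$ and $r/2$ margins leave ample room.
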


\begin{remark}
Since $f_{x^i}^*$ is a bijection there must be some $U$ that is mapped to  $V$. 
The content of the first conclusion is that the same $U$ works at both points. 
\end{remark}

\begin{proof}
By assumption, for $x \in B_r(x^1) \cap B_r(x^2)$, the domain of $\fs_x$ is $\calU$. 
We start by proving the following claim. For $z^1, z^2 \in B_r(x^1) \cap B_r(x^2)$ 
and $U \in \calU$, suppose $\fs_{z^1}(U)=V$. Also, assume either
\[
d_{\T(U)}\left( z^1_U,z^2_U \right) \geq \frac{r}{4}
\qquad\text{and}\qquad
 \forall \, W \in \calU- \{U\} \quad z^1_{W}=z^2_{W},
\]
or 
\[
 \forall \, W \in\calU-\{U\} \quad d_{\T(W)}\left( z^1_{W},z^2_{W} \right)  \geq \frac{r}{4}
\qquad\text{and}\qquad
z^1_{U}=z^2_{U}.
\]
That is, $z^1$ and $z^2$ either differ by $r/4$ in only one factor, or all but one factor. 
Then $\fs_{z^2}(U)=V$.
  
We prove the claim.  Assume the first case holds. Since $\phi_{z^1}^U$ is a 
$(\mul, \add_1)$--quasi-isometry  (\propref{Prop:Local-Factors}) we have  
\[
d_{\T(V)}\Big(\phi_{z^1}^U(z^1),\phi_{z^1}^U(z^2) \Big)
    \geq \frac{r}{4\mul} - \add_1.
\]
By \propref{Prop:Local-Factors} and the triangle inequality applied twice, we get 
\[
d_{\T(V)}\Big( f(z^1)_V , f(z^2)_V \Big)
  \geq \frac{r}{4\mul}-\add_1-2\Distance_1.
\]

Now suppose  $\fs_{z^2}(W)=V$ for $W \neq U$.  Since $z^1_{W}=z^2_{W}$,
\propref{Prop:Local-Factors}  and the triangle inequality then implies 
\[
d_{\T(V)}\Big( f(z^1)_{V},f(z^2)_{V} \Big)\leq 2\Distance_1.
\]
These two inequalities contradict the choice of $R$ in the statement of the 
Proposition, proving the the claim. The proof of the claim when the second 
assumption holds is similar. 

We now prove the Proposition. In each $W \in \calU$ choose $z_W$
so that 
\[
d_{\T(W)}(z_W, x^1_W) =  d_{\T(W)}(z_W, x^2_W) = \frac{r}4.
\]
Let $z^1$, $z^{1,2}$ and $z^2$ be points in $B_r(x^1) \cap B_r(x^2)$ so that,
for $i=1,2$
\[
z^i_U=z_U, 
\qquad\text{and}\ 
\forall\, W \in \calU-\{U\} \quad z^i_W= x^i_W
\]
and 
\[
\forall\, W \in \calU \quad z^{1,2}_W = z_W.
\]
Note that the claim can be applies to pairs $(x^1, z^1)$, 
$(z^1, z^{1,2})$, $(z^{1,2}, z^2)$ and $(z^2, x^2)$ concluding that
\[
V= \fs_{x^1}(U) = \fs_{z^1}(U) = \fs_{z^{1,2}}(U) = \fs_{z^2}(U) = \fs_{x^2}(U).
\]

Now, consider $u \in B_r(x^1_U) \cap B_r(x^2_U)$ and let 
$z \in B_r(x^1) \cap B_r(x^2)$ be so that $z_U = u$. We know
\[
d_{\T(V)} \Big( f(z)_V, \phi_{x^1}^U(u) \Big) \leq \Distance_1
\qquad\text{and}\qquad
d_{\T(V)} \Big( f(z)_V, \phi_{x^2}^U(u) \Big) \leq \Distance_1.
\]
Therefore, 
\[
d_{\T(V)} \Big( \phi_{x^1}^U(u), \phi_{x^2}^U(u) \Big) \leq 2\Distance_1.
\]
This finishes the proof of the proposition. 
\end{proof}

\begin{corollary} \label{Cor:xi=1}
If $\T= \T(\Sigma, L)$ and $U$ is a component of $\Sigma$ 
with $\xi(U) =1$, then for every $x \in \T(\Sigma, L)$ where $\fs_x$ is defined, 
$\fs_x(U)=U$. 
\end{corollary}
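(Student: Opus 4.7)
The plan is to first verify the conclusion at a point $x_\infty \in \partial_L(\Sigma, L)$ using the $\add_\Sigma$-anchoring hypothesis, then propagate it to arbitrary $x$ via \propref{Prop:Analytic-Continuation}. The key feature to exploit is the convention recalled at the start of this section: for every point of $\T(\Sigma, L)$ the component $U$ of $\Sigma$ with $\xi(U)=1$ is always an element of every $R$-decomposition (and of every $R/(2\mul)$-decomposition at $f(x)$), regardless of which curves inside $U$ happen to be short. In particular $\fs_x(U)$ is always defined, and the candidate target $V = U$ is always available for analytic continuation.

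For the base step, fix such an $x_\infty$ with $\fs_{x_\infty}$ defined, set $V = \fs_{x_\infty}(U)$, and let $\phi = \phi_{x_\infty}^U \from B_r((x_\infty)_U) \to \T(V)$ be the local factor from \propref{Prop:Local-Factors}. Because $x_\infty \in \partial_L$, the short curve $\alpha$ of $U$ at $x_\infty$ satisfies $\tau_{x_\infty}(\alpha) = L$, and its $L$-level set inside $\T(U) \cong \H$ is a horocycle, which is unbounded in the Teichm\"uller metric. I would vary $(x_\infty)_U$ along this horocycle inside $B_r((x_\infty)_U)$ to produce a one-parameter family $x(t) \in \partial_L(\Sigma, L) \cap B_r(x_\infty)$ whose only changing product-region coordinate is that of $U$; this is legitimate because $U$ is a connected component of $\Sigma$, so its Teichm\"uller factor splits off independently from every other factor of $\T(\Sigma)$. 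Anchoring gives $d_{\T(\Sigma)}(x(t), f(x(t))) \leq \add_\Sigma$ for all $t$, and combining this with the local-factor estimate from \propref{Prop:Local-Factors} and the product-region theorem yields
\[
d_{\T(V)}\bigl( \phi(x(t)_U),\, x(t)_V \bigr) \;\leq\; \add_\Sigma + \Distance_0 + \Distance_1.
\]
If $V$ were located in a component of $\Sigma$ different from $U$, then $x(t)_V$ would be independent of $t$, so $\phi$ would be coarsely constant on a segment of $\T(U)$ of length $\sim r$; this contradicts $\phi$ being a $(\mul, \add_1)$-quasi-isometry once $R$, hence $r = \rho_1 R$, is large enough. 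Therefore $V \subseteq U$, and the convention excluding annular sub-decompositions of a $\xi = 1$ component then forces $V = U$.

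For the general step, I would connect an arbitrary $x$ with $\fs_x$ defined to such an $x_\infty$ by a path that pinches the curves of $P_x$ successively up to $\tau$-value $L$, subdivided into short sub-paths on each of which the $R$-decomposition is constant. On every sub-path \propref{Prop:Analytic-Continuation} applies with common target $V = U$, its hypotheses holding because $U$ lies in the $R$-decomposition at each source and in the $R/(2\mul)$-decomposition at each image by the above convention, and so $\fs(U) = U$ propagates through the sub-path. Transitions between sub-paths occur when a curve crosses the threshold $R$; these only refine complementary regions outside $U$ and leave the factor $U$ intact, so $\fs(U) = U$ survives the transitions. The main obstacle I expect is in the base step: one must check that the horocycle in $\T(U)$ lifts to a path in $\partial_L(\Sigma, L)$ whose Teichm\"uller length really is comparable to $r$, so that the quasi-isometry obstruction from $\phi$ becomes quantitatively decisive against the constants coming from anchoring and from \propref{Prop:Local-Factors}.
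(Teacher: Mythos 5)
Your overall strategy — use the convention that the $\xi(U)=1$ component $U$ is always a factor, verify $\fs_x(U)=U$ at one point near $\partial_L$ where anchoring gives control, and then propagate with Proposition~\ref{Prop:Analytic-Continuation} — is exactly the skeleton of the paper's proof. The gap is in the base step, and it is not the one you flagged.

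You invoke Proposition~\ref{Prop:Local-Factors} at a point $x_\infty \in \partial_L(\Sigma,L)$. But the conclusion of that proposition is a statement about the full ball $B_r(x_\infty)$, and its proof produces the map $\f = \psi_\calV \circ f \circ \psi_\calU^{-1}$ on a product of balls and then feeds it into Theorem~\ref{Thm:Factor-Preserving}, which needs a genuine ball as domain. When $x_\infty$ lies on $\partial_L(\Sigma,L)$, the set $\psi_\calU^{-1}\bigl(\prod_U B_{\rho_0 R}((x_\infty)_U)\bigr)$ sticks outside the truncated space $\T(\Sigma,L)$, so $\f$ is only defined on a half-ball and the local splitting theorem does not apply. (The paper is aware of this: Propositions~\ref{Prop:Torus-Identity} and~\ref{Prop:Shortest-Identity} both impose $d_\T(z,\partial_L(\Sigma))\ge R$ as an explicit hypothesis before invoking \secref{Sec:Local-Factors}.) So $\fs_{x_\infty}$ is not even defined at your base point, and the quasi-isometry $\phi$ you want to contradict does not exist there.

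The natural repair — retreat to a point $x_0$ at distance about $R$ from $\partial_L$ — then runs into a quantitative problem. At such a point anchoring no longer applies directly; you must transport it from a nearby boundary point $z$ using $d_\T(f(x_0),f(z)) \le \mul R + \add$, which only gives $d_\T(f(x_0),x_0) = O(\mul R)$. Meanwhile the horocyclic variation you can perform inside a single ball $B_r(x_0)$ has diameter at most $2r = 2\rho_1 R < 2R$. The error dominates the progress, so no contradiction emerges from one application of Proposition~\ref{Prop:Local-Factors}. This is precisely why the paper's base step moves the $U$-coordinate over a distance $4\mul R$, far exceeding $R$: then the progress $d_{\T(U)}(y_U,y'_U)\ge\mul R$ beats the $O(\mul R + R)$ transport error, and a \emph{chain} of $N=4\mul/\rho_1$ overlapping applications of Proposition~\ref{Prop:Local-Factors} bounds the drift in the wrong factor by $2N\Distance_1 = 8\mul\Distance_1/\rho_1$, which is $O(1)$ in $R$ and hence eventually contradicted. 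A single-ball argument, anchored or not, cannot produce this margin.
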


\begin{proof}
Note that, by definition, the subsurface $U$ is always included in any 
$R$--decomposition $\calU$ of $\Sigma$. Hence, \propref{Prop:Analytic-Continuation}  
applies. That is, for a large $R$ as in \propref{Prop:Analytic-Continuation},
if $\fs_x(U)=U$ then the same hold for points in an $r$--neighborhood of $x$. 
But the set of points where $\fs_x$ is defined is connected. Hence, it is enough to 
show $\fs_x(U)=U$ for just one points $x$. 

Let $\partial_L(U)$ be the projection of boundary of $\partial_L(\Sigma)$
to $\T(U)$. Consider a geodesic $g_U$ in $\T(U)$ connecting $a$ to $a'$ 
so that
\[
d_{\T(U)}\big( a, \partial_L(U)\big) = d_{\T(U)}\big(a', \partial_L(U)\big) = R,
\]
\[
d_{\T(U)}\big(g, \partial_L(U)\big) \geq R
\qquad\text{and}\qquad
d_{\T(U)} (a, a' ) \geq 4 \mul R.
\]
For example, we can choose a geodesic connecting two $L$--horoballs 
in $\T(U)$ that otherwise stays in the thick part of $\T(U)$ and then
we can cut off a subsegment of length $R$ from each end. 

Let $W = \Sigma -U$. Choose $b \in \T(W)$ to have distance $R$ from 
$\partial_L(W)$ and let $g$ be a path in $\T(\Sigma, L)$ that has constant projection 
to $W$ and projects to $g_U$ in $U$. That is,  
\[
g(t)_W=b
\qquad\text{and}\qquad
g(t)_U = g_U(t). 
\]
Then $g$ connects a point $x \in \T(\Sigma, L)$ to a point $x' \in \T(\Sigma,L)$ 
where $x_U=a$, $x'_U=a'$ and $x_W = x'_W = b$. 

Let, $y=f(x)$, $y'=f(x')$ and let $z$ and $z'$ be points on $\partial_L(\Sigma)$ 
that are distance $R$ to $x$ and $x'$ respectively. Since $f$ is anchored, 
we have 
\[
d_{\T} ( f(z), z) \leq \add
\qquad{and}\qquad 
d_{\T} ( f(z'), z') \leq \add,
\]
and hence 
\[
d_{\T}(y,z) \leq d_\T \big(f(x), f(z) \big) + d_\T \big(f(z), z \big)
\leq (\mul R + \add) + \add.  
\]
And the same holds for $d_{\T}(y',z')$. Also 
\begin{align*}
d_{\T(U)} (z_U, z'_U) 
 &\geq d_{\T(U)} ( x_U, x'_U) - d_{\T} ( z, x) - d_{\T} ( z', x') \\
  &\geq 4 \mul R- 2 R.
\end{align*} 
Therefore, 
\begin{align}
d_{\T(U)} ( y_U, y'_U) 
  & \geq d_{\T(U)} ( z_U, z'_U) - d_{\T(U)} ( y_U, z_U) - d_{\T(U)} ( y'_U, z'_U)\notag \\
  & \geq 4 \mul R - 2 R - 2(\mul R + 2\add) \geq  \mul R. \label{Eq:U-Progress} 
\end{align}

Now, choose points
\[
x= x_0, \ldots, x_N=x'
\]
along $g$ so that $d_\T(x_i, x_{i+1}) \leq r$ and 
$N = \frac{4 \mul R}{r}= 4  \frac{\mul}{\rho_1}$. 
As mentioned before, we already know $\fs_x(U) = \fs_{x_i}(U)$. If $\fs_x(U) \ne U$, 
then $\fs_{x_i}(U) \ne U$ and hence, by \propref{Prop:Local-Factors},
\[
d_{\T(U)}\Big( f(x_i)_{U},f(x_{i+1})_{U} \Big)\leq 2\Distance_1.
\]
Therefore, 
\[
d_{\T(U)}\Big( y_U, y_U' \Big)\leq 2N\Distance_1 \leq 8 \frac{\mul \Distance_1}{\rho_1}. 
\]
For $R$ large enough, this contradicts \eqnref{Eq:U-Progress}. 
Hence, $\fs_x(U) = U$. 
\end{proof}

\section{Nearly Shortest Curves} \label{Sec:Short-Curves}

The goal of this section is to prove \propref{Prop:Short-Curve}. Essentially, 
this states that if $\alpha$ is one of the shortest curves in $x$, then 
$\alpha$ is also short in $f(x)$ and, furthermore, $\fs_x(\alpha) = \alpha$. 

Throughout this section we always assume, if $\T = \T(\Sigma, L)$, that
$\Sigma$ does not have any components $U$ with $\chi(U)=1$.  
Hence,  \propref{Prop:Short-Curve} is the complementary statement to 
\corref{Cor:xi=1}. The effect of this assumption is that every curve has an 
adjacent curve. The arguments are conceptually very elementary. 
However, we need to keep careful track of constants. 

\subsection*{Cone is preserved}
For  $\eta>1$, define the $\eta$--maximal cone $\MC(\eta)$ to be the set of 
points $x \in \T$ so that 
\begin{itemize}
\item $\calS_x$ is a pants decomposition.
\item for $\alpha, \beta \in \calS_x= P_x$, 
\[
\frac{\tau_x(\alpha)}{\tau_x(\beta)} \leq \eta. 
\]
\end{itemize}
Let $\displaystyle \tau_x = \max_{\gamma \in P_x} \tau_x(\gamma)$. 
For point $z$ to be in $\LR$, $z$ has to contain two adjacent curves
that have lengths larger than $\ell_0$. Hence, for $x \in \MC(\eta)$, we have 
\begin{equation} \label{Eq:Distance-LR} 
\frac{\tau_x}{\eta} \leq d_\T(x, \LR) \leq \tau_x.
\end{equation}

\begin{proposition} \label{Prop:Max-Preserved}
For every $\eta_0$, there  is a $\tau_0$ so that  if $x\in \MC(\eta_0)$ and 
$\tau_x\geq \tau_0$ then
\[
f(x)\in \MC \big( 16 \mul^2\eta_0\big).
\]
In fact, for every $\gamma \in P_x$, we have
\[
\frac{\tau_x}{4\mul \eta_0} \leq \tau_{f(x)}(\gamma) \leq 4 \mul \tau_x.
\]
\end{proposition}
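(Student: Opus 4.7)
Set $R := \tau_x/\eta_0$. By \eqnref{Eq:Distance-LR}, $d_\T(x, \LR) \geq R$. Since $x \in \MC(\eta_0)$, the short pants decomposition $P_x$ satisfies $\tau_x(\alpha) \geq R$ for every $\alpha \in P_x$, with all complementary components being pairs of pants. Hence the $R$--decomposition $\calU$ at $x$ is exactly the family of annuli $\{A_\alpha : \alpha \in P_x\}$, and is maximal with $|\calU| = \xi(S)$. For $\tau_0$ large enough that $R \geq \distance_1$, \propref{Prop:Local-Factors} then yields a maximal $\frac{R}{2\mul}$--decomposition $\calV$ at $f(x)$, a bijection $\fs_x \from \calU \to \calV$, and $(\mul, \add_1)$--quasi-isometries $\phi_x^\alpha \from B_r(x_\alpha) \to \T(\fs_x(\alpha))$ with $r = \rho_1 R$, approximating $f$ on the product region up to additive error $\Distance_1$.

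The crux of the argument is to show that $\calV$ is also purely annular, and that the curves it labels are precisely those of $P_x$. To see $\calV$ is purely annular, I would apply \propref{Prop:Local-Factors} symmetrically to $f^{-1}$ at $f(x)$, which is legitimate since $d_\T(f(x), \LR) \geq R/(2\mul)$; the counting $|\calU| = \xi(S)$ forces the resulting decomposition back at $x$ to coincide with $\calU$, producing a bijection $\calV \to \calU$. Using \propref{Prop:Analytic-Continuation} to align the two bijections, any complexity-one $V \in \calV$ would correspond through the inverse factoring to a pinching direction at $x$ that cannot be absorbed into the purely annular $\calU$, a contradiction. To identify the curves, I expect a further argument (using that $x_\alpha$ is deep in $H_\alpha$ for every $\alpha \in P_x$ by the cone hypothesis) that $\fs_x(A_\alpha) = A_\alpha$ as annuli, so that the specific curves of $P_x$ are the short curves at $f(x)$. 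This curve-level rigidity---matching annuli to the \emph{same} annuli, not a permuted set---is the main obstacle.

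Granting $\fs_x(A_\gamma) = A_\gamma$, the lower bound is immediate from membership $A_\gamma \in \calV$: $\tau_{f(x)}(\gamma) \geq R/(2\mul) = \tau_x/(2\mul\eta_0) \geq \tau_x/(4\mul\eta_0)$. The upper bound follows from the $(\mul, \add_1)$--quasi-isometry $\phi_x^\gamma \from B_r(x_\gamma) \to \T(\gamma) = \H$: since $\tau_y(\gamma)$ is coarsely the distance from $y$ to $\partial H_\gamma$, the QI transports the depth $\tau_x(\gamma) \leq \tau_x$ of $x_\gamma$ in $H_\gamma$ to a depth of $f(x)_\gamma$ at most $\mul \tau_x + O(1)$ in $H_\gamma$; for $\tau_0$ chosen large enough to absorb the additive constants, this is bounded by $4\mul \tau_x$. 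Taking ratios over $\gamma \in P_x$ gives $\tau_{f(x)}(\gamma_1)/\tau_{f(x)}(\gamma_2) \leq (4\mul\tau_x)/(\tau_x/(4\mul\eta_0)) = 16\mul^2\eta_0$, so $f(x) \in \MC(16\mul^2 \eta_0)$.
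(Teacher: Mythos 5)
Your route diverges from the paper's at the outset, and it runs into issues at exactly the point you flag. The paper never identifies $\fs_x(A_\gamma)$ with $A_\gamma$ in this proposition, nor does it need to: it proves the $\tau$-bounds for $\gamma\in P_{f(x)}$ directly, via two contradiction arguments. First (the ``Claim''), if some $\beta\in P_{f(x)}$ had $\tau_{f(x)}(\beta)\leq \tau_x/(4\mul\eta_0)$ while an adjacent $\beta'$ had $\tau_{f(x)}(\beta')\leq 4\mul\tau_x$, one pinches along the $\beta'$ factor and tracks, via \propref{Prop:Analytic-Continuation} and the per-step drift bound $2\Distance_1$, that all other factors of $f^{-1}$ barely move; this forces the preimage to stay far from $\LR$ while its image is close to $\LR$, a contradiction. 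Second, no curve has $\tau_{f(x)}$ larger than $4\mul\tau_x$, shown by pinching a pair of adjacent $\alpha,\alpha'\in P_x$ all the way to $\partial\MR$ and again bounding the total drift. Your proposal instead tries to establish the curve-level identification $\fs_x(A_\gamma)=A_\gamma$, which you correctly call ``the main obstacle'' but never prove --- and at this stage it \emph{cannot} be proven: that identity is the content of \corref{Cor:f-star}, which relies on \propref{Prop:Ivanov} and Ivanov's theorem, both of which come after \propref{Prop:Max-Preserved} and indeed use it as input. Attempting it here would be circular. (A related subtlety: the statement literally says $\gamma\in P_x$, but the paper's proof delivers the bounds for $\gamma\in P_{f(x)}$; the two sets are identified only later, once $\fsP$ has been normalized to the identity.)

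Two subsidiary steps are also flawed. The counting argument for $\calV$ being purely annular does not work: \emph{every} maximal decomposition of $S$ has exactly $\xi(S)$ pieces (annuli plus $\xi=1$ subsurfaces), so $|\calU|=|\calV|$ is automatic and carries no information about whether $\calV$ is annular; and nothing in the proposition requires $\calV$ to be annular (the lower bound $\tau_x/(4\mul\eta_0)$ even sits below the $R/(2\mul)$ threshold for a curve to appear in $\calV$). The upper bound argument is also incomplete: $\phi_x^\gamma$ is a quasi-isometry of the ball $B_r(x_\gamma)$ with $r=\rho_1\tau_x/\eta_0$, which is strictly smaller than the horoball depth $\tau_x(\gamma)\geq\tau_x/\eta_0$; a quasi-isometry of a ball that does not reach $\partial H_\gamma$ does not control the depth of the image point, so ``transporting the depth'' is not justified. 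This is precisely why the paper's upper bound uses the global pinching-path argument rather than the local factor maps alone.
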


\begin{proof}  
Choose $R \geq \distance_1$ large enough so that if 
$d(x,\LR)\geq R$ then, similar to \eqnref{Eq:2K}, we have 
\begin{equation} \label{Eq:Distance-f-LR}
\frac {d(x,\LR)}{2\mul} \leq d(f(x),\LR)\leq 2\mul d(x,\LR),
\end{equation} 
and so that 
\begin{equation} \label{tau}
r=\rho_1R \geq 32\mul \, \eta_0 \, \Distance_1.
\end{equation}  
Let $\tau_0=16 \mul^2 \eta_0 R$. 

\subsection*{Claim} For adjacent curves $\beta, \beta' \in P_{f(X)}$ we have
\[
\tau_{f(x)} (\beta) \leq  \frac{\tau_x}{4\mul\eta_0}
\qquad \Longrightarrow \qquad
\tau_{f(x)}(\beta')\geq 4\mul \tau_x.
\]

\begin{proof} [Proof of Claim] \renewcommand{\qedsymbol}{$\blacksquare$}
We will later prove that in fact $\tau_{f(x)}(\beta')\leq 4 \mul \tau_x$, finishing
the proof of the Proposition. 

We prove the claim by contradiction. Assume that the first inequality in the claim
holds and the second is false. Note that we still have, by 
Equations \eqref{Eq:Distance-LR} and \eqref{Eq:Distance-f-LR}, that 
\[
 \max \left( \tau_{f(x)}(\beta'), \tau_{f(x)}(\beta) \right) 
 \geq d_\T(f(x) , \LR)  \geq \frac{ d_\T(x, \LR)}{2 \mul} \geq \frac{\tau_x}{2\eta_0 \mul}. 
\]
To summarize, we have two adjacent curves $\beta$ and $\beta'$ with
\[
\tau_{f(x)}(\beta) \leq \frac{\tau_x}{4\mul\eta_0}
\qquad\text{and}\qquad
 \frac{\tau_x}{2\mul\eta_0} \le \tau_{f(x)}(\beta') \leq 4 \mul \tau_x.
\]

Consider a geodesic $g$ moving only in the $\beta'$ factor that increases the
length of $\beta'$ and connects $f(x)$ to a point $f(x')$ with 
\[
\tau_{f(x')}(\beta') = \frac{\tau_x}{4\mul\eta_0}\geq \frac{\tau_0}{4\mul\eta_0} \geq R.
\]
We have \begin{equation}
\label{eq:distanceLR}
d\big( f(x'),\LR \big)\leq \frac{\tau_x}{4\mul\eta_0}.
\end{equation}
Take a sequence of points 
\[
f(x)=y_0, y_1, \ldots, y_N=f(x')
\]
along $g$ with $d_\T(y_i, y_{i+1}) \leq r$ and
\[
N=\frac{4 \mul \tau_x}{r}.
\]
Let $h = f^{-1}$, let $x^i = h(y_i)$, let $\calV_i$ be the $R$--decomposition at 
$y_i$ and $\calU_i$ be the $\frac{R}{2\mul}$--decomposition at $x_i$. 
By assumption, $\tau_{y_i}(\beta') \geq R$ and hence $\beta' \in \calV_i$ for every $i$
and, since the length of no other curve is changing along $g$, all $\calV_i$ are 
in fact the same decomposition (which we denote by $\calV$). Also $\calU_1 = P_x$. 
Let $\alpha' = h^\star_y(\beta')$, 

Let $\alpha\in P_x$ with $\alpha\neq \alpha'$.  Let $V \in \calV$ 
be the component so that $h^\star_{y_1}(V) = \alpha$, We show by induction on $i$, that  
$\alpha \in \calU_i$ and $h^\star_{y_i}(V)=\alpha$.   Assume this for $1 \leq i < j$.  Since 
$y_{i+1} \in B_r(y_i)$, and $y_i$ and $y_{i+1}$ have the same projection to 
$\T(V)$, \propref{Prop:Local-Factors} implies
 \[  
 d_{\T(\alpha)}(x^i_\alpha, x^{i+1}_\alpha )  \leq 2\Distance_1. 
 \]
Therefore, 
\[
d_{\T(\alpha)}(x^1_\alpha, x^j_\alpha) \leq 2 j \Distance_1 
  \leq 2 N \Distance_1 
  \leq  \frac{8 \mul \tau_x}r \Distance_1 
  \leq \frac{\tau_x}{4\eta_0}. 
\]
The last inequality is from the assumption on $r$. Hence, 
\[
\tau_{x^j}(\alpha) \geq \frac{\tau_x}{\eta_0} - \frac{\tau_x}{4 \eta_0}
\geq \frac{3\tau_x}{4\eta_0}. 
\]
This means in particular  that $\alpha$ is short enough 
($\tau_{x^j}(\alpha) \geq \frac{R}{2\mul}$)
and is included in $\calU_j$. Moreover by \propref{Prop:Analytic-Continuation},  
$h_{y_j}^\star(V)=\alpha$, completing the induction step.  Continuing this way, we 
conclude that, for every $\alpha \in P_x$, $\alpha \ne \alpha'$, we have 
\[
\tau_{x'}(\alpha) \geq \frac{3\tau_x}{4 \eta_0}. 
\]
In particular, for every pair of adjacent curves in $x'$, one satisfies the
above inequality. But
\[
d_\T(x', \LR) \leq 2 \mul d_\T(f(x'), \LR) \leq \frac{\tau_x}{2\eta_0}.
\]
This is a contradiction, which prove the claim. 
\end{proof}

We now show that, all curves $\beta \in P_{f(x)}$ must in fact satisfy 
\[
\tau_{f(x)}(\beta)\leq 4\mul \tau_x.
\]
The argument is similar to the one above, so we skip some of the details. 
Suppose this is false for some curve $\beta$. Let $\alpha$ be a curve so that
$f_x^*(\alpha) = \beta$ and $\alpha'$ be a curve adjacent to $\alpha$. 
Let $g$ be a geodesic that moves only in the $\alpha$ and $\alpha'$ factors,
increasing their lengths,  that connects $x$ to a point $x'$ where 
\[
\tau_x(\alpha) = \tau_x(\alpha') = R. 
\]
We cover $g$ with points 
\[
x=x_1, \ldots, x_N= x'
\]
so that $d(x,x') \leq r$ and $N= \frac{\tau_x}r$. Let $\calU_i$ be the 
$R$--decompositions at $x_i$ and $\calV_i$ be the $\frac{R}{2\mul}$--decomposition 
at $y_i = f(x_i)$. Then, as before, $\alpha$ and $\alpha'$ are in every $\calU_i$ 
(in fact, all $\calU_i$ are the same decompositions which we denote by $\calU$). 
Let $\fs_x(\alpha')=V'$. By an argument as above, the total movement in any other 
factor (besides $\beta$ and $V'$) is at most $2 N \Distance_1$. 

Since $d_\T(x_N, \LR) = R$, we have $d_\T(y_N, \LR) \leq 2 \mul R$. 
That is, there are two adjacent curves $\gamma$ and $\gamma'$ with 
\begin{equation} \label{Eq:Two-gammas}
\tau_{y_N}(\gamma) , \tau_{y_N}(\gamma') \leq 2\mul R. 
\end{equation}
But we have
\[
\tau_{y_N}(\beta) \geq \tau_{y_1}(\beta) - N (\mul r + \add) 
\geq 4 \mul \tau_x -  \frac{\tau_x}{r} (\mul r-\add) \geq  \frac{5}{2}\mul \tau_x
\] for $R$ large enough.
Hence, neither $\gamma$ or $\gamma'$ equals $\beta$. And one,
say $\gamma'$ is not contained in $V'$. Therefore, 
\begin{align*}
\tau_{y_1}(\gamma') 
& \leq \tau_{y_N}(\gamma') + 2 N \Distance_1 
    \leq  2\mul R + 2 \frac{\tau_x}r \Distance_1 \\
  &\leq \frac{2 \mul R}{\tau_0} \tau_x + \frac{2\Distance_1}{r}\tau_x
     \leq \left( \frac{1}{8\mul \eta_0} +  \frac{1}{16 \mul \eta_0} \right)\tau_x
     < \frac{\tau_x}{4 \mul \eta_0}.
\end{align*}
Now, since $\gamma$ is adjacent to $\gamma'$, the claim implies, 
$\tau_{y_1}(\gamma) \geq 4 \mul \tau_x$. However, again as above, 
\[
\tau_{y_N}(\gamma') \geq \tau_{y_1}(\gamma') - N (\mul r+\add)
\geq 4 \mul  \tau_x -  \frac{\tau_x}r (\mul r+\add) \geq \frac{5}{2} \mul \tau_x.
\]
This contradicts \eqnref{Eq:Two-gammas}. We are done.  
\end{proof}

\subsection*{Induced map on the curve complex is cellular.}
Let $\calP(S)$ be the complex of pants decompositions of $S$. 
For a pants decomposition $P \in \calP(S)$, define $\MC(P, \eta_0)$ 
to be the set of points $x \in \MC(\eta_0)$ where $P_x = P$. 
Note that $\MC(\eta_0)$ is not connected and its connected components
are parametrized by $\calP(S)$: 
\[
\MC(\eta_0) =  \coprod_{P \in \calP(S)} \MC(P, \eta_0). 
\]
The same is true for  $\MC(16\mul^2\eta_0)$ and, 
by \propref{Prop:Max-Preserved},
\[
f\big( \MC(\eta_0) \big)  \subset \MC(16 \mul^2\eta_0).
\] 
Hence, we can define a bijection $\fsP \from \calP(S) \to \calP(S)$ so that 
\[
f\big(\MC(P, \eta_0) \big) 
   \subset \MC\big( \fsP(P), 16 \mul^2\eta_0 \big).
\]
We will show that $f^\star_\calP$ is induced by a simplicial automorphism
 $\fsC$ of the curve complex. Note that the definition of $f^\star_\calP$
 depends on the choice of $\eta_0$. 

\begin{proposition}
\label{Prop:Ivanov}
Assuming $\eta_0$ large enough, there is a simplicial automorphism 
$\fsC \from \calC(S) \to \calC(S)$ apriori depending on $\eta_0$ so that, for a pants decomposition 
$P= \{ \alpha_1, \ldots, \alpha_n\}$ we have
\[
\fsP (P) = \big\{ \fsC(\alpha_1), \ldots, \fsC (\alpha_n) \big\}.
\]
\end{proposition}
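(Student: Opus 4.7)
The plan is to define $\fsC$ on individual curves by setting $\fsC(\alpha) := \fs_x(\alpha)$ for any $x$ lying deep in a maximal cone $\MC(P,\eta_0)$ with $P \ni \alpha$, then to verify that this definition is independent of both $x$ and $P$, from which the remaining assertions follow almost for free. Concretely, I would fix a curve $\alpha$, pick any pants decomposition $P \ni \alpha$, and choose $x \in \MC(P,\eta_0)$ with $\tau_x$ large enough that Propositions~\ref{Prop:Max-Preserved}, \ref{Prop:Local-Factors} and \ref{Prop:Analytic-Continuation} apply for a suitable parameter $R$ (for instance $R = \tau_x/\eta_0$). Since $P$ is a pants decomposition and every $\tau_x(\gamma) \geq R$ for $\gamma \in P$, the $R$--decomposition $\calU$ at $x$ is precisely the collection of annular neighborhoods of curves of $P$; by \propref{Prop:Max-Preserved}, the $(R/2\mul)$--decomposition $\calV$ at $f(x)$ is likewise the collection of annular neighborhoods of curves of $\fsP(P)$. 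The bijection $\fs_x \from \calU \to \calV$ from \propref{Prop:Local-Factors} thus identifies each curve of $P$ with a curve of $\fsP(P)$, and I would set $\fsC(\alpha) := \fs_x(\alpha)$.

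Next I would establish two independence statements. For independence of $x$ within $\MC(P,\eta_0)$, I note that this region is path-connected, so two deep points $x,x'$ may be interpolated by a chain $x=x_0,x_1,\dots,x_N=x'$ with $d_\T(x_i,x_{i+1}) \leq r = \rho_1 R$ and $\tau_{x_i}(\gamma) \geq R$ for every $\gamma\in P$. Along the chain, $\alpha$ always lies in the $R$--decomposition, and \propref{Prop:Analytic-Continuation} forces $\fs_{x_i}(\alpha) = \fs_{x_{i+1}}(\alpha)$; telescoping yields $\fs_x(\alpha) = \fs_{x'}(\alpha)$. For independence of $P$, I would invoke the classical fact that the subcomplex of $\calP(S)$ of pants decompositions containing $\alpha$ is connected under elementary moves, and reduce to the case $P' = (P\setminus\{\beta\}) \cup \{\beta'\}$ with $\alpha \ne \beta$. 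I would then choose an auxiliary point $x^\ast$ by pinching every curve of $P\cap P'$ to a $\tau$--value far exceeding $R$ while keeping both $\beta$ and $\beta'$ at moderate length; from $x^\ast$, straight paths that vary only $\beta$ (respectively only $\beta'$) lead into deep points of $\MC(P,\eta_0)$ (respectively $\MC(P',\eta_0)$), and along both paths $\alpha$ stays in the $R$--decomposition, so analytic continuation shows that the image of $\alpha$ does not change.

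For the remaining assertions, picking any deep $x \in \MC(P,\eta_0)$ and reading $\fs_x$ off on every factor gives $\fsP(P) = \{\fsC(\alpha_1),\dots,\fsC(\alpha_n)\}$ directly. Any disjoint family of curves extends to a pants decomposition, so its $\fsC$--image lies inside a single pants decomposition, proving $\fsC$ is simplicial. Applying the same construction to a coarse inverse of $f$, which is again a quasi-isometry with comparable constants, produces the inverse map, so $\fsC$ is a simplicial automorphism. I expect the hardest part to be the independence from $P$: one must calibrate $\eta_0$ and the threshold $\tau_0$ large enough relative to $\mul,\add,\distance_1,\Distance_1,\rho_1$ so that the detour point $x^\ast$ and both connecting paths remain within the regime where the local factorization and analytic continuation hypotheses apply, and so that the curves of $P\cap P'$ stay unambiguously deep enough to be tracked consistently along the entire detour.
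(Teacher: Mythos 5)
Your construction $\fsC(\alpha):=\fs_x(\alpha)$ for $x$ deep in $\MC(P,\eta_0)$ is a sensible starting point, and the verification of independence of $x$ within a fixed cone by chaining Proposition~\ref{Prop:Analytic-Continuation} is essentially sound. The genuine problem is the independence-from-$P$ step, and the paper's proof is structured precisely to circumvent it. At your detour point $x^\ast$ the curves $\beta$ and $\beta'$ intersect, so they cannot both satisfy $\tau_{x^\ast}(\cdot)\geq R$; the $R$--decomposition at $x^\ast$ therefore contains the $\xi=1$ subsurface $U$ spanned by $\beta$ and $\beta'$, not the annulus $\beta$. Your path from $x^\ast$ into a deep point of $\MC(P,\eta_0)$ must cross the locus where $\tau(\beta)$ passes through $R$, and near that locus the hypothesis of Proposition~\ref{Prop:Analytic-Continuation} --- that every point of $B_r(x^1)\cup B_r(x^2)$ have the \emph{same} $R$--decomposition $\calU$ --- fails: on one side the factor is the annulus $\beta$, on the other it is $U$. ``$\alpha$ stays in the $R$--decomposition'' is not the hypothesis; the whole $\calU$ must be constant, and nothing proved at this stage of the paper lets you track the image of the $\alpha$-factor across that switch.

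The paper never puts $\beta$ or $\beta'$ into the decomposition at all. It takes $\tau_x(\beta)=\tau_{x'}(\beta')=d$ and $\tau_x(\alpha)=\tau_{x'}(\alpha)=\eta_0 d$ for $\alpha\in\balpha$, with $d<R$ forced by $r=\rho_1R\geq 2d$ and $\rho_1<1$. Then the $R$--decomposition is the fixed $\calU=\balpha\cup\{U\}$ at both $x$ and $x'$, both lie in a single product region $\T_\calU$ with $d_\T(x,x')\leq 2d\leq r$, and Proposition~\ref{Prop:Analytic-Continuation} is applied directly --- but to the $\xi=1$ factor $U$, not to any annulus. The consequence that the other factors of $f(x)$ and $f(x')$ move by $O(\Distance_1)$ forces $\fsP(P)$ and $\fsP(P')$ to share $\xi(S)-1$ curves; $\fsC(\beta)$ is then defined combinatorially as the curve that changes, and its well-definedness across pants decompositions is checked by the quadruple argument with $P,P_1,P_2,P_{12}$, not by analytic continuation along a path. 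The paper even flags the distinction: the remark following Remark~\ref{Rem:Identity} notes that $\fsC$ and $f_x^\star$ are a priori different maps, and the identity $\fs_x(\alpha)=\alpha$ on annular factors (Corollary~\ref{Cor:f-star}) is derived only \emph{after} Proposition~\ref{Prop:Ivanov}, via Ivanov's theorem. A direct well-definedness argument for $\fs_x$ on annuli is exactly what the paper's combinatorial detour replaces, and it is the piece missing from your sketch.
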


\begin{proof}
Choose $\eta_0$ and $d$ so that
\[
\eta_0 d \geq R \geq \distance_1
\qquad\text{and}\qquad
r= \rho_1 R \geq 2d. 
\]
Let $\balpha$ be a multi-curve containing $\xi(S) -1$  curves and let 
\[
P = \balpha \cup \{ \beta\}
\qquad\text{and}\qquad
P' = \balpha \cup \{ \beta'\}
\] 
be two extensions of $\balpha$ to pants decompositions with
$\I(\beta, \beta') \leq 2$. Let $x$ be a point in $\MC(P, \eta_0)$ so that 
$\tau_x(\beta) = d$ and $\tau_x(\alpha) = \eta_0 d$ for $\alpha \in \balpha$.
Similarly, define $x' \in \MC(P', \eta_0)$ so that 
\[
\forall \alpha \in \balpha \quad x_\alpha' = x_\alpha, \qquad
\tau_{x'}(\beta') = d
\qquad\text{and}\qquad 
d_{\T}(x, x') \leq 2d.
\]
Let $\calU$ be the decomposition consisting of $\balpha$ and the complementary 
subsurface $U$. Then $x, x' \in T_\calU$. Let $g$ be a path connecting 
$x$ to $x'$ whose projection is constant in every $\T(\alpha)$ for $\alpha \in \balpha$, 
is a geodesic connecting $x_U$ to $x'_U$ in $\T(U)$ 
and has a length $2d$. We have 
\[
d_\T(g, \LR) \geq \eta_0 d \geq R.
\] 
Hence \propref{Prop:Local-Factors} and \propref{Prop:Analytic-Continuation}
apply to $x$ and $x'$. This means $f(x)$ and $f(x')$ are contained in 
the same product region, say $T_\calV$. Let $V = \fs_x(U)$. Then, for
$W \in \calV$, $W \ne V$, 
\[
d_{\T(W)}(x_W, x'_W) \leq \Distance_1. 
\]
But 
\[
f(x) \in \MC\big( \fsP (P), 16\mul^2\eta_0\big)
\quad\text{and}\quad
 f(x') \in \MC \big(\fsP (P'), 16\mul^2\eta_0\big).
\]
Hence, $\calV$ contains a multi curve $\bgamma$ with  $(\xi(S)-1)$ curves 
and $\fsP (P)$ and $\fsP (P')$ share all but one curve. That is, there are curve 
$\delta$ and $\delta'$ so that 
\[
\fsP (P)= \bgamma \cup \{\delta\}
\quad\text{and}\quad
\fsP (P')= \bgamma \cup \{\delta'\}.
\]
However, for the moment, we do not have a good bound on the intersection 
number $\I(\delta, \delta')$. 

We first show that the multi curve $\bgamma$ does not depend on 
the choice of $\beta'$. Assume $\beta''$
is another curve with $i(\beta, \beta'') \leq 2$, and let $P'' = \balpha \cup \{\beta''\}$. 
If $\I(\beta,' \beta'') \leq 2$ then $\fsP(P)$, $\fsP(P')$ and $\fsP(P'')$ share
$(\xi(S)-1)$ curves. Hence these have to be the same multicuve $\bgamma$. 
If $\I(\beta,' \beta'')$ is large, then we can find a sequence
\[
\beta' = \beta_1 \ldots, \beta_m= \beta''
\] 
of curves that are disjoint from $\bgamma$, $\I(\beta, \beta_i) \leq 2$ and 
$\I(\beta_i, \beta_{i+1}) \leq 2$.
Define $P_i =\balpha \cup \{\beta_i\}$. Then arguing as above shows that 
all $\fsP(P_i)$ share the same $(\xi(S)-1)$ curves. That is, the same
curve changes from $\fsP(P)$ to $\fsP(P'')$ as it did from $\fsP(P)$ 
to $\fsP(P')$ and $\bgamma \subset \fsP(P'')$. 

Note that  we have shown that there is an association between curves in $P$ and $\fsP (P)$; a curve 
$\beta \in P$ is associated to the curve $P(\beta) \in \fsP(P)$ that is not contained in 
any $\fsP (P')$ constructed as above. We will show $P(\beta)$ is the same for every $P$.

Let $\balpha$ be a multi-curve with $(\xi(S)-2)$ curves and let
$P = \balpha \cup \{\beta_1, \beta_2\}$ be a pants decomposition. 
Let 
\[
P_1 = \balpha \cup \{\beta_1', \beta_2\}, \quad
P_2= \balpha \cup \{\beta_1, \beta_2'\}, \quad\text{and}\quad
P_{12}= \balpha \cup \{\beta_1', \beta_2'\},
\] 
with $\I(\beta_i, \beta_i') \leq 2$ for $i=1,2$. Denote 
\[
Q= \fsP(P), \quad Q_1= \fsP(P_1), \quad Q_2= \fsP(P_2)
\quad\text{and}\quad Q_{12}= \fsP(P_{12}).
\] 
Let $\bgamma$ be a multi-curve with $(\xi(S) -2)$ curves so that 
$Q= \bgamma \cup \{P(\beta_1), P(\beta_2)\}$. From the discussion above, 
we know that, there are curves  $\delta_1, \delta_2$ so that.
\[
Q_1 = \bgamma \cup \{\delta_1, P(\beta_2) \}, 
\qquad\text{and}\qquad
Q_2= \bgamma \cup \{P(\beta_1), \delta_2\}.
\]
Since $P(\beta_1)$ and $P(\beta_2)$ are disjoint, the curves 
$\delta_1$ and $\delta_2$ must be different. Also $Q_{12}$ shares $(\xi(S)-1)$ 
curves with both $Q_1$ and $Q_2$. But the map $\fsP$ is a bijection and 
$Q_{12} \not = Q$. Moreover since $\delta_1\neq\delta_2$ and $\beta_1$ and $\beta_2$ are disjoint $Q_{12}$ cannot contain both $P(\beta_1)$ and $\delta_1$ and similarly it cannot contain both $P(\beta_2)$ and $\delta_2$.  Therefore, 
\[
Q_{12}= \bgamma \cup \{\delta_1, \delta_2\}.
\]
That means, $P_2(\beta_1) = P(\beta_1)$ because it is the curve
that changes from $Q_2$ to $Q_{1,2}$. Similarly, 
$P_1(\beta_2) = P(\beta_2)$.

W have shown the association $\beta \to P(\beta)$ is the same for 
adjacent pants decompositions in $\calP(S)$. Hence, it does not depend on $P$ and 
we can define $\fsC(\beta) = P(\beta)$ for any pants decomposition $P$ containing 
$\beta$. 

Every multi-curve is contained in a pants decomposition. Hence
$\fsC$ sends disjoint curves to disjoint curves and in fact sends simplifies in $\calC(S)$ 
to simplices. Since $\fsP$ is onto, $\fsC$ is also onto. We show $\fsC$ is one-to-one.
Assume for contradiction that $\fsC(\alpha) = \fsC(\beta)$ and let
$P_\alpha$ and $P_\beta$ be pants decompositions that contain $\alpha$
and $\beta$ respectively but have no curves in common. 
Then $\fsP(P_\alpha)$ and $\fsP(P_\beta)$ share a curve $\delta$. Let 
\[
\fsP(P_\alpha)= Q_1, \ldots, Q_m =\fsP(P_\beta)
\]
be a sequence of adjacent pants decomposition all containing $\delta$
and let $P_i = \fsP^{-1}(Q_i)$. Then $\fsC^{-1}(\delta)$ is contained in 
every $P_i$. This contradicts the assumption that $P_\alpha$ and
$P_\beta$ have no common curves. 

We have shown $\fsC$ is simplicial and it is a bijection, that is, it is
a simplicial automorphism of $\calC(S)$. 
\end{proof}

\begin{remark} \label{Rem:Identity} 
In the case $\T = \T(S)$, by a theorem of Ivanov \cite{ivanov:ACC}, $\fsC$ is induced 
by an isometry $\fs$ of \Teich space $\T(S)$. Hence, after applying the inverse of this 
isometry to $f$, we can assume that $\fsC$ is the identity map. In the case
$\T= \T(\Sigma, L)$ and $f$ is anchored, we know that $\fsP$ is the identity map. 
Thus, so is $\fsC$. Indeed, for the remainder of the paper, we assume that 
in these cases $\fsC$ is the identity. 
\end{remark}

\begin{remark}  We point out that $\fsC$ and $f_x^\star$ are different maps.  We now show that on each $\eta_0$ cone the latter map is the identity as well. 
\end{remark}

\begin{corollary} \label{Cor:f-star}
Assume either $f=f_S$ or $f=f_\Sigma$ is anchored. Let $x \in \MC(P, \eta_0)$ 
be a point with 
\[
\tau_x \ge \max (\tau_0, 2\eta_0 \distance_1) .
\] 
Then for $\alpha \in P$, we have $\fs_x(\alpha) = \alpha$.
\end{corollary}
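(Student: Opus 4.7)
The plan is to set up the $R$--decomposition so that $\fs_x$ becomes a permutation of $P$, and then show this permutation is the identity by rerunning Ivanov's argument (in the style of Proposition~\ref{Prop:Ivanov}) at a slightly coarser scale and transferring the conclusion to the finer scale via a horoball analysis inside $\T(U)$.

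First I would set $R = \tau_x/(2\eta_0) \ge \distance_1$. Every $\alpha \in P$ has $\tau_x(\alpha) \ge \tau_x/\eta_0 \ge 2R$, so the $R$--decomposition at $x$ is exactly $\calU = P$, viewed as its annular subsurfaces. Proposition~\ref{Prop:Max-Preserved}, combined with $\fsC = \mathrm{id}$ from Remark~\ref{Rem:Identity}, will give $f(x) \in \MC(P, 16\mul^2\eta_0)$ and $\tau_{f(x)}(\alpha) \ge R/(2\mul)$ for every $\alpha \in P$, so the $R/(2\mul)$--decomposition at $f(x)$ is also $\calV = P$. Hence $\fs_x$ is a permutation $\sigma$ of $P$, and the task reduces to showing $\sigma = \mathrm{id}$.

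Next I would fix $\alpha \in P$, set $\balpha = P \setminus \{\alpha\}$, and let $U$ be the unique complementary component with $\xi(U) = 1$ containing $\alpha$. Choose an adjacent curve $\alpha' \subset U$ with $\I(\alpha,\alpha') \le 2$, form $P' = \balpha \cup \{\alpha'\}$, and pick $x' \in \MC(P',\eta_0)$ with $x'_\beta = x_\beta$ for $\beta \in \balpha$; both $x, x'$ then lie in $\T_{\calU'}$ for $\calU' = \balpha \cup \{U\}$. By arranging $\tau_x(\alpha)$ and $\tau_{x'}(\alpha')$ to both be close to $\tau_x/\eta_0$, one can select a coarser scale $R'$ with $\tau_x(\alpha), \tau_{x'}(\alpha') < R' \le \min_{\beta \in \balpha}\tau_x(\beta)$ so that the $R'$--decomposition at both $x$ and $x'$ equals $\calU'$ and $x' \in B_{\rho_1 R'}(x)$ for $\eta_0$ large. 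Rerunning the proof of Proposition~\ref{Prop:Ivanov} at scale $R'$ while invoking $\fsC = \mathrm{id}$ will force the common product region for $f(x)$ and $f(x')$ to be $\T_{\calV'}$ with $\calV' = \balpha \cup \{U\}$, so the coarser bijection $\fs^{\calU'}_x$ sends $U$ to $U$. Proposition~\ref{Prop:Local-Factors} then supplies a $(\mul, \add_1)$--quasi-isometry $\phi^U \from B_{\rho_1 R'}(x_U) \to \T(U) = \H$ tracking the $U$--coordinate of $f$; since $\alpha$ is short at both $x$ and $f(x)$, $\phi^U$ sends $x_U \in H_\alpha$ to within bounded distance of $H_\alpha$. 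Varying $x'$ over all choices of adjacent $\alpha''\subset U$ with $\I(\alpha,\alpha'') \le 2$ and using Proposition~\ref{Prop:Analytic-Continuation} to propagate will then show that $\phi^U$ coarsely preserves each horoball $H_{\alpha''}$ meeting $B_{\rho_1 R'}(x_U)$, rather than merely permuting them.

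Finally I would return to the finer decomposition $\calU = P$. The $\alpha$--factor of $\T_\calU$ corresponds precisely to the ``depth inside $H_\alpha$'' coordinate of $\T(U)$, identified with $\T(\alpha)$ via Fenchel--Nielsen. Because $\phi^U$ coarsely preserves $H_\alpha$, the local factorization of Proposition~\ref{Prop:Local-Factors} at scale $R$ identifies the $\alpha$--factor at $x$ with the $\alpha$--factor at $f(x)$, yielding $\fs_x(\alpha) = \alpha$. The main obstacle is the scale change: Ivanov's argument identifies the single non-annular factor $U$ with itself at scale $R'$, but the corollary concerns an annular factor nested inside $U$ at the finer scale $R$. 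Reconciling these relies on the identity $\fsC = \mathrm{id}$ on \emph{all} curves in $U$, not only on the pants curves, so that iterating over adjacent curves in $U$ rules out the possibility that $\phi^U$ permutes the horoballs of $\T(U)$ rather than preserving each one individually.
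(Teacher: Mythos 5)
Your first step---choosing $R = \tau_x/(2\eta_0)$ so that the $R$--decomposition at $x$ and the $R/(2\mul)$--decomposition at $f(x)$ both equal $P$, making $\fs_x$ a permutation of $P$---agrees with the paper. But the route you then take to pin the permutation down has genuine gaps. The coarser scale $R'$ you want, satisfying $\tau_x(\alpha) < R' \leq \min_{\beta\in\balpha}\tau_x(\beta)$ and simultaneously $\rho_1 R'$ at least about $d_\T(x,x') \approx 2\tau_x(\alpha)$, exists only if $\tau_x(\alpha)$ is smaller than every other $\tau_x(\beta)$ by a definite margin. But $\alpha$ ranges over \emph{all} of $P$, and for $x\in\MC(P,\eta_0)$ the numbers $\tau_x(\beta)$ are only comparable to within a factor of $\eta_0$; in particular $\alpha$ may realize the maximum, and then no such $R'$ exists. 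You cannot ``arrange'' $\tau_x(\alpha)$ once $x$ is fixed. Moreover, even when $R'$ exists on the source side, nothing controls the target: for the $R'/(2\mul)$--decomposition at $f(x)$ to equal $\balpha\cup\{U\}$ you need $\tau_{f(x)}(\alpha) < R'/(2\mul)$, but \propref{Prop:Max-Preserved} only places $\tau_{f(x)}(\alpha)$ somewhere in the wide interval $\bigl[\tau_x/(4\mul\eta_0),\ 4\mul\tau_x\bigr]$, so $\alpha$ may still be a factor of $\calV'$. Finally, the last step---passing from ``$\phi^U$ coarsely preserves $H_\alpha$ at scale $R'$'' to ``$\fs_x(\alpha)=\alpha$ at scale $R$''---would need a compatibility statement between the local factorizations of \propref{Prop:Local-Factors} at two different scales; no such statement appears in the paper, and \propref{Prop:Analytic-Continuation} only compares factorizations at a single scale across nearby basepoints, not across nested decompositions.

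The paper's own argument is more elementary and sidesteps all of this. After your first step, fix $\alpha$ and, for each $\gamma\in P\setminus\{\alpha\}$, choose $\gamma'$ adjacent to $\gamma$ (intersecting once or twice, disjoint from $P\setminus\{\gamma\}$), set $P' = (P\setminus\{\gamma\})\cup\{\gamma'\}$, and pick $x'\in\MC(P',\eta_0)$ agreeing with $x$ in every coordinate except the $\gamma$--one, with $\tau_{x'}(\gamma')=\tau_x(\gamma)$. Connect $x$ to $x'$ by a path $g$ that is constant in all other factors. Along $g$ the curve $\alpha$ stays short, so by \propref{Prop:Local-Factors} and \propref{Prop:Analytic-Continuation} the $\fs_x(\alpha)$--coordinate of $f\circ g$ moves by at most $2N\Distance_1$ with $N = \tau_x/(\rho_1 R) = 2\eta_0/\rho_1$, a quantity independent of $\tau_x$. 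On the other hand, since $\fsP=\mathrm{id}$ (\remref{Rem:Identity}), $f(x')$ lies in $\MC(P',\cdot)$, so the $\gamma$--coordinate of $f\circ g$ changes by at least about $\tau_x/(4\mul\eta_0)$, which exceeds the bounded amount once $\tau_x$ is large. Hence $\fs_x(\alpha)\neq\gamma$ for every $\gamma\neq\alpha$, and therefore $\fs_x(\alpha)=\alpha$. This direct length-comparison argument works uniformly for every $\alpha\in P$ and never needs to change scale.
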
 

\begin{proof}
Let $R= \frac{\tau_x}{2\eta_0}$. Then the $R$--decomposition at $x$ is $P_x$
and $\frac{R}{2\mul}$--decomposition of $f(x)$ is also $P_x$. This is because, 
by \remref{Rem:Identity} and \propref{Prop:Max-Preserved} we have, 
for $\alpha \in P_x$
\[
\tau_{f(x)}(\alpha) \geq \frac{\tau_x}{4 \mul \eta_0} = \frac{R}{2 \mul}. 
\]
Hence $\fs_x(\alpha)$ is some curve in $P_x$. 

Take $\gamma \in P$, $\gamma \ne \alpha$, let $\gamma'$ be a curve interesting 
$\gamma$ once or twice that is disjoint from other curves in $P$ and let 
\[
P' = (P - \{\gamma\}) \cup \{ \gamma'\}. 
\]
Let $x' \in \MC(P', \eta_0)$ be a point so that
\[
\forall \beta \in P- \{\gamma\} \quad x_\beta = x'_\beta
\qquad\text{and}\qquad
\tau_x(\gamma) = \tau_{x'}(\gamma').
\]
Let $g$ be a path connecting $x$ to $x'$ that is constant in all other factors. 
As  argued before, since $\alpha$ remains short along $g$, the length of 
 $\fs_x(\alpha)$ changes by at most $2 N \Distance_1$ where 
 $N = \frac{\tau_x}{\rho_1R}$. This is less than the change in the  length  of $\gamma$. 
 Hence $\fs_x(\alpha) \ne \gamma$. 
Since we can do this argument for every curve $\gamma \ne \alpha$, 
we conclude $\fs_x(\alpha) = \alpha$. 
\end{proof} 

\subsection*{The restriction to the thick part.} 

\begin{proposition}
\label{Prop:Thick}
Assume $\fsP$ is the identity. Then, there is constant $\Dt$ so that, if $x$ 
is $\ell_0$--thick, then
\[
d_\T\big(f(x), x)\big) \leq  \Dt. 
\]
\end{proposition}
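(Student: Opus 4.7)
\textit{Proof plan for \propref{Prop:Thick}.} The strategy is to apply the distance formula \eqref{Eq:Distance}: it suffices to show $d_W(x, f(x)) = O(1)$ for every subsurface $W$ that contributes to the sum, and that only a controlled number of subsurfaces can contribute. Fix $x$ in the thick part. Throughout we make use of the fact (Remark~\ref{Rem:Identity}) that after our normalization $\fsC$ is the identity on $\calC(S)$, so Corollary~\ref{Cor:f-star} applies at any maximal cusp point we construct.

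\textit{Reduction to local factor analysis.} For each curve $\alpha$ with $\Ext_x(\alpha) \leq \Bers$ (all other $\alpha$ trivially have $d_\alpha(x, f(x)) = O(1)$), extend $\alpha$ to a Bers-short pants decomposition $P \ni \alpha$ at $x$ and pinch $P$ from $x$ with no relative twisting to obtain $y \in \MC(P, 1)$ with all $\tau_y(\alpha') = L$, for $L$ large. Then $d_\T(x, y) = L + O(1)$. By \corref{Cor:f-star} we have $\fs_y(\alpha') = \alpha'$ for every $\alpha' \in P$. Applying \propref{Prop:Local-Factors} at $y$ with $R$ comparable to $L$, on the ball $B_{\rho_1 R}(y)$ the map $f$ is $\Distance_1$-close to a product $\prod_{\alpha'\in P} \phi_y^{\alpha'}$ of $(\mul,\add_1)$-quasi-isometries $\phi_y^{\alpha'} : B_{\rho_1 R}(y_{\alpha'}) \to \H$. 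A similar pinching argument, extending curves that fill a given subsurface $W$ with $\xi(W)=1$, handles the non-annular case.

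\textit{Main obstacle: pinning each local factor down to the identity.} The real work is to show that every $\phi_y^{\alpha'}$ is $O(1)$-close to the identity of $\H$, not merely to some isometry. We first use \propref{Prop:Analytic-Continuation} to patch the local factors $\phi_y^{\alpha'}$ as $y$ varies over pinched points at different depths $L$ and over different extensions of $\alpha'$ to a pants decomposition; the factors agree pairwise up to $2\Distance_1$, so they assemble into a globally defined quasi-isometry $\phi^{\alpha'}$ on a large region of $H_{\alpha'}$. The induced map on the annular complex $\calC(\alpha') \cong \Z$ is therefore a $(\mul, O(1))$-quasi-isometry of $\Z$. Because $\fsC$ is the identity on $\calC(S)$, this induced map is orientation-preserving; the normalization of the thick reference points (equivalently, the anchoring hypothesis, or in the $\T(S)$ case the fact that after composing with the Ivanov isometry no global shift remains) fixes the additive constant, so $\phi^{\alpha'}$ is $O(1)$-close to the identity. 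Consequently $d_{\alpha'}(y, f(y)) = O(1)$, and since pinching did not move the $\alpha'$-projection by more than $O(1)$ away from the $x$-projection, $d_{\alpha'}(x, f(x)) = O(1)$. The same scheme yields $d_W(x, f(x)) = O(1)$ for non-annular $W$.

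\textit{Conclusion.} For each subsurface $W$, $d_W(x, f(x))$ is bounded by a uniform constant $C_0$ depending only on $\mul,\add, \Distance_1$ and the topology. Moreover only subsurfaces whose boundary is short at $x$ or at $f(x)$ can contribute to the sum in \eqref{Eq:Distance} above threshold $\thresh$, and these are bounded in number by the topology of $S$. Substituting into \eqref{Eq:Distance} gives $d_\T(x, f(x)) \leq \Dt$ for a constant $\Dt$ with the advertised dependence. The hardest step is the identification of $\phi^{\alpha'}$ with the identity; the analytic-continuation mechanism of \propref{Prop:Analytic-Continuation} and the triviality of $\fsC$ are both essential there.
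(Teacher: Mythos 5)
Your approach is genuinely different from the paper's, and it contains a gap that I don't think can be filled without circularity.

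The step you yourself flag as hardest --- showing that the patched local factor $\phi^{\alpha'}$ is $O(1)$-close to the identity of $\H$ --- is asserted rather than proved. You say that because $\fsC$ is the identity on $\calC(S)$ the induced map on $\calC(\alpha')\cong\Z$ is orientation-preserving, and that ``the normalization of the thick reference points \dots fixes the additive constant.'' Neither claim is justified. First, $\fsC$ is an automorphism of the curve complex $\calC(S)$; it carries no information about projections to annular complexes $\calC(\alpha')$, so it cannot rule out a nontrivial twist. Second, even granting orientation preservation, a patched-together quasi-isometry of $\H$ built from $(\mul,\add_1)$-quasi-isometries on balls need not be close to any isometry, let alone the identity --- there is both a multiplicative distortion of order $\mul$ and a free translational ambiguity to kill. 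The paper \emph{does} develop exactly the machinery needed to kill these (quasi-geodesics with endpoints anchored at horoballs or at the thick part, in Propositions~\ref{Prop:Torus-Identity} and~\ref{Prop:Shortest-Identity}), but those propositions are stated \emph{after} \propref{Prop:Thick}, and \propref{Prop:Shortest-Identity} explicitly takes ``$f$ restricted to the thick part is $\Dt$-close to the identity'' as a hypothesis. So importing that mechanism here would be circular, and in the $\T(S)$ case there is no anchoring hypothesis to fall back on.

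The paper's actual argument sidesteps all of this. It does not attempt to bound $d_W(x,f(x))$ for every $W$ via the distance formula; it only bounds the extremal lengths of the Bers-short curves of $x$ in $f(x)$. Concretely: for a $\Bers$-short curve $\alpha$, extend it to a short pants decomposition $P$, pinch to a point $z\in\MC(P,\eta_0)$ with $\tau_z=\tau_0$ and $d_\T(x,z)\ladd\tau_0$. Proposition~\ref{Prop:Max-Preserved} gives $\tau_{f(z)}(\alpha)\geq\tau_0/(4\mul\eta_0)$, so $\alpha$ is very short in $f(z)$, and then the one-Lipschitz control of $\log\Ext(\alpha)$ along a segment of length $\ladd\mul\tau_0$ from $f(z)$ to $f(x)$ gives a uniform upper bound on $\Ext_{f(x)}(\alpha)$. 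Since the Bers-short curves fill $x$ and $x$ is thick, the cited result \cite[Theorem B]{rafi:LT} together with the comparability of extremal and hyperbolic length in the thick part then forces $d_\T(x,f(x))=O(1)$. No local-factor analysis, no analytic continuation, and no pinning down of twist parameters is needed; none of \propref{Prop:Local-Factors}, \propref{Prop:Analytic-Continuation}, or \corref{Cor:f-star} is used.
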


\begin{proof}
Let $\sf B>0$ be the Bers constant which has the property that  the set of curves 
of length at most $\sf B$ fill $x$. To find an upper-bound for $d\big( x,f(x)\big)$ 
it is enough to show that, for each $\alpha$ with $\Ext_x(\alpha) \leq  \sf B$, $\alpha$ 
has  bounded length in $f(x)$. (This follows, for example, from \cite[Theorem B]{rafi:LT} 
and the fact that extremal length and hyperbolic lengths are comparable in a thick
surface $x$.) 

Let $\eta_0$ be as in \propref{Prop:Ivanov} and $\tau_0$ be as in 
\propref{Prop:Max-Preserved}. For $\alpha$ as above, let $P$ be
a pants decomposition containing $\alpha$ and $z \in \MC(P, \eta_0)$ 
be such that $\tau_y = \tau_0$ and $d_\T(x, z) \ladd \tau_0$. 
Then, by \propref{Prop:Max-Preserved}, 
\[
\tau_{f(z)}(\alpha) \geq \frac{\tau_0}{4 \mul \eta_0}.
\]
Now we have
\[
\mul \tau_0 \gadd d_\T(f(x), f(z)) 
\gadd \log\frac{\Ext_{f(x)}(\alpha)}{\Ext_{f(z)}(\alpha)}
\gadd \log \Ext_{f(x)}(\alpha). 
\]
Hence
\[
 \log \Ext_{f(x)}(\alpha) \ladd \mul \tau_0. 
\]
That is, the length of all such $\alpha$ in $f(x)$ is uniformly bounded
and hence $d_\T(x, f(x))$ is uniformly bounded as well. 
\end{proof}

\subsection*{Grouping of sizes}
\begin{definition} \label{Def:Admissible-Scale}
Fix once and for all 
\[
\mu= 64\mul^2\Distance_1^2. 
\]
Suppose we have a set $\{d_1, d_2, \ldots, d_m\}$ of positive numbers. We say 
\emph{$\eta$ is an admissible scale for this set} if there is a decomposition 
$\calE$ of this set so that, if $d_i, d_j \in E$ for $E \in \calE$ then 
$\frac{d_j}{d_i}\leq \eta$, and if  they are in different subsets   
then $\frac{d_j}{d_i}\geq\mu\,\eta$. We refer to $\calE$ as the 
\emph{the partition associated to $\eta$}. We call the set $E \in \calE$
containing the largest elements as the \emph{top group}. 

\end{definition}

\begin{lemma}
\label{Lem:Group}
Given $\eta_0>1$, there are scales $\eta_0<\eta_1<\ldots <\eta_m$ such that for 
any set $\{d_1, d_2, \ldots, d_m\}$ of positive numbers, some $\eta_i$ is an 
admissible scale for this set. In fact, we can define $\eta_i$ recursively as
\[
\eta_{i+1} = \mu\, \eta_i^3. 
\]
\end{lemma}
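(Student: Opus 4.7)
\noindent\textit{Proof proposal.}
My plan is to combine a pigeonhole argument on log--scale with induction on $m$. Write $\nu = \log \mu$ and $L_j = \log \eta_j$, so the recursion $\eta_{j+1} = \mu \eta_j^3$ becomes $L_{j+1} = 3L_j + \nu$. After sorting $d_1 \leq d_2 \leq \cdots \leq d_m$, form the $m-1$ consecutive log--gaps $\delta_i = \log(d_{i+1}/d_i)$. Any admissible partition has to respect sorted order, so its parts are consecutive intervals; the between--part constraint forces any separating ratio to be $\geq \mu \eta_j$ and the within--part constraint forces every internal ratio and the total block width to be $\leq \eta_j$. Thus $\eta_j$ is admissible if and only if \textup{(A)} no $\delta_i$ lies in the open band $(L_j,\, L_j+\nu)$, and \textup{(B)} every maximal consecutive run of $\delta_i$'s all $\leq L_j$ has total sum $\leq L_j$.

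\noindent
The first key observation is that the $m+1$ open bands $(L_j,\, L_j+\nu)$ for $j = 0,1,\ldots,m$ are pairwise disjoint: from $L_{j+1} = 3L_j + \nu$ and $L_j > 0$ we get $L_{j+1} > L_j + \nu$. Since we have only $m-1$ log--gaps to distribute among $m+1$ disjoint bands, at least two bands are empty, so condition \textup{(A)} holds at $\geq 2$ of our scales. Let $j^\ast$ be the smallest such index; the problem reduces to verifying \textup{(B)} at some such scale.

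\noindent
To handle \textup{(B)} I would induct on $m$. The cases $m=1,2$ are immediate: for $m = 2$ the only obstruction is a single ratio in $(\eta_0,\mu\eta_0)$, but such a ratio satisfies $r < \mu\eta_0 < \eta_1$, so it is absorbed into a single--block partition at $\eta_1$. For $m \geq 3$, if \textup{(B)} already holds at $\eta_{j^\ast}$ we are done. Otherwise there is a maximal run $\delta_{i_1},\ldots,\delta_{i_2}$ with each $\delta_i \leq L_{j^\ast}$ and total $S > L_{j^\ast}$; since each term is $\leq L_{j^\ast}$, the run contains at least two gaps and thus at least three consecutive $d_i$'s. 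I would then contract this bad run, replacing its constituent $d$'s by a single representative (their geometric mean) to form a set $S'$ with $|S'| < m$. Applying the inductive hypothesis to $S'$ yields an admissible scale $\eta_{j'}$ with $j' \leq |S'| \leq m-1$; lifting this back to the original set, the contracted run becomes one block whose width $S \leq (|\text{run}|)(L_{j^\ast}+\nu) \leq (m-1)(L_{j^\ast}+\nu)$ has to satisfy the width bound at the lifted scale. The cubic recursion $L_{j+1} = 3L_j + \nu$ is chosen precisely so that, after at most one index shift, $L_{j'+1} \geq S$, keeping us within the list $\{\eta_0,\ldots,\eta_m\}$.

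\noindent
The main obstacle is the bookkeeping in the lift--back step: ensuring that both the contracted block's width \emph{and} all the other blocks inherited from the admissible partition of $S'$ simultaneously satisfy conditions \textup{(A)} and \textup{(B)} at a common scale in our list. The factor $3$ in the recursion is critical here---it is exactly the right exponent to absorb the worst--case widening of a bad run (at most $(m-1)(L_j+\nu)$) into the next admissible scale $L_{j+1} = 3L_j + \nu$, without overshooting past $\eta_m$. A naive pigeonhole alone would only give condition \textup{(A)}; it is the interplay of the disjoint bands with the cubic growth that forces some $\eta_j$ to satisfy \textup{(B)} as well.
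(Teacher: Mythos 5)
Your log-band characterization of admissibility (conditions (A) and (B)) is correct, and the pigeonhole giving (A) at two or more scales is fine, but the induction-via-contraction at the end is not carried out and does not close. Concretely, take $m$ points with every consecutive log-gap equal to $L_0-\varepsilon$ for small $\varepsilon$, so that at $j^\ast=0$ condition (A) holds but the single run has width $S=(m-1)(L_0-\varepsilon)$, violating (B). Contracting that run leaves one point, so the inductive hypothesis only supplies a scale $\eta_{j'}$ with $j'\leq 1$, yet the lifted block has width $S\approx(m-1)L_0$, which exceeds $L_1=3L_0+\nu$ as soon as $m\geq 5$. The recursion $\eta_{i+1}=\mu\eta_i^3$ is simply not sized to absorb a contracted run's entire width in ``at most one index shift''; the cubic has a different origin. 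You also never verify that the non-contracted parts of the lifted partition retain the $\mu\eta$-separation at the lifted scale, which is a second issue in the lift-back.

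The paper's proof bypasses the (A)/(B) split entirely. Let $\calE_i$ be a minimum-cardinality partition with all intra-part ratios $\leq\eta_i$ (the greedy left-to-right blocking after sorting). The within-part bound holds by construction, so the only obstruction to admissibility at scale $\eta_i$ is a pair of distinct parts $E,E'$ containing elements $d,d'$ with $d/d'<\mu\eta_i$. For any $c\in E$, $c'\in E'$ one then has $c/c'\leq \eta_i\cdot\mu\eta_i\cdot\eta_i=\eta_{i+1}$, so $E\cup E'$ is a single $\eta_{i+1}$-part and $|\calE_{i+1}|\leq|\calE_i|-1$. Since $|\calE_1|\leq m$, after at most $m-1$ failures some $\calE_j$ is a single part, at which point admissibility is vacuous. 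This is the actual reason for the exponent $3$: it is the triple-ratio diameter bound for the fused part $E\cup E'$, not a budget for a long bad run. If you want to salvage your write-up, replace the bad-run contraction with this merging step; the band picture and pigeonhole, while correct, are not needed.
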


\begin{proof}
For $1\leq i \leq m$, let $\calE_i$ be the  partition of $\{d_1, d_2, \ldots, d_m\}$ containing the fewest number of subsets 
so that, for $E \in \calE_i$ and $d,d' \in E$, we have $\frac{d}{d'} \leq \eta_i$. 
If we also have  $\frac{d}{d'} \geq \mu \eta_i$ for $d,d'$ is different sets
then $\eta_i$ is an admissible scale and we are done. Otherwise, 
for every $i$, there are two sets $E, E' \in \calE_i$, $d \in E$ and $d' \in E'$ so that 
$\frac{d}{d'} \leq \eta \mu$. Then, for any $c \in E$ and $c' \in E'$ 
we have 
\[
\frac{c}{c'} 
\leq \frac{c}{d} \cdot  \frac{d}{d'} \cdot  \frac{d'}{c'} 
\leq \mu \eta_i^3 = \eta_{i+1}. 
\]
Which means, $E$ and $E'$ fit in one group in $\calE_{i+1}$ and 
\[
\Big| \calE_{i+1} \Big| \leq \Big| \calE_i \Big| - 1. 
\]
If this holds for all $i$, then some $\calE_j$ has size one and $\eta_j$ would
be an admissible scale.   
\end{proof}

\subsection*{The shortest curves are preserved}

\begin{proposition} \label{Prop:Short-Curve}
Assume either $f=f_S$ of $f=f_\Sigma$ is anchored.
For $\tau_1$ large enough, the following holds. Let $x \in \T$ be a point 
such that, for $\alpha \in P_x$, $\tau_x(\alpha) \geq \tau_1$ and let $\eta$ be
an admissible scale for the set $\big\{ \tau_x(\alpha) \big\}_{\alpha \in P_x}$ with
the associated partition $\calE$. Let $E$ be the top group in $\calE$ and
let $\balpha$ be the set of curves $\alpha$ where $\tau_x(\alpha) \in E$. 
Then, for every $\alpha \in \balpha$, we have
\[
\frac{\tau_{x}}{\sqrt \mu \eta} 
\leq  \tau_{f(x)}(\alpha)
\leq 2\mul \tau_{x},
\]
and $\fs_x(\alpha) = \alpha$. 
\end{proposition}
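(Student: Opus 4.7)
The strategy is to reduce the claim to the maximal-cone case handled by \corref{Cor:f-star}: I build an auxiliary point $x^\ast\in\MC(P_x,\eta_0)$ by pinching the non-top-group curves, apply \corref{Cor:f-star} at $x^\ast$, and propagate the resulting identification back to $x$ via \propref{Prop:Analytic-Continuation}. For the setup, let $R:=\tau_x/(4\Distance_1\eta)$; since $\sqrt{\mu}=8\mul\Distance_1$, the admissibility of $\eta$ guarantees that the $R$-decomposition $\calU$ at $x$ contains every $\alpha\in\balpha$ (because $\tau_x(\alpha)\geq\tau_x/\eta>R$) and excludes every $\beta\in\bbeta:=P_x\setminus\balpha$ (because $\tau_x(\beta)\leq\tau_x/(\mu\eta)<R$). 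Choosing $\tau_1$ large in terms of $\eta$ and $\distance_1$ makes $R\geq\distance_1$; after possibly enlarging $\calU$ by a bounded number of $\bbeta$-curves so that every adjacent pair of pants-curves in $P_x$ contains an $R$-long curve, we also have $d_\T(x,\LR)\geq R$, so \propref{Prop:Local-Factors} applies and defines $\fs_x$ on $\calU\supseteq\balpha$.

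Next, working inside the product region $\T_{\calU_\ast}$ where $\calU_\ast$ is the maximal decomposition with core curves $P_x$, I construct a piecewise geodesic path $x=x_0,x_1,\ldots,x_N=x^\ast$ that moves only in the $\bbeta$-coordinates, pinching each $\beta\in\bbeta$ until $\tau_{x^\ast}(\beta)$ is of order $\tau_x$. The resulting $x^\ast$ lies in $\MC(P_x,\eta_0)$ for some $\eta_0=O(\eta)$, and since $\tau_{x^\ast}\geq\tau_x\geq\tau_1$ is large, \corref{Cor:f-star} yields $\fs_{x^\ast}(\alpha)=\alpha$ for every $\alpha\in P_x$. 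The path has total length $O(\tau_x)$ and subdivides into $N=O(\Distance_1\eta/\rho_1)$ steps of size $r=\rho_1 R$, a bound independent of $\tau_x$.

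To propagate the identification back to $x$, I partition the path into maximal sub-paths on which the $R$-decomposition is constant; the decomposition only grows along the path, by accruing new $\bbeta$-curves as they become $R$-short. On each sub-path consecutive $x_i,x_{i+1}$ lie within $r$ of each other and satisfy the hypotheses of \propref{Prop:Analytic-Continuation}, giving $\fs_{x_i}(\alpha)=\fs_{x_{i+1}}(\alpha)$ for every $\alpha\in\balpha$. At each transition point between sub-paths I invoke \propref{Prop:Local-Factors} at both the pre- and post-transition scales at the same point; the essential uniqueness (up to bounded error) of the factorization provided by \thmref{Thm:Factor-Preserving} forces the images of the $\balpha$-factors to coincide across the scale change. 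Chaining across all sub-paths yields $\fs_x(\alpha)=\fs_{x^\ast}(\alpha)=\alpha$ for every $\alpha\in\balpha$.

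Finally, the length bounds follow routinely. For the upper bound, any point $x'$ at which $\alpha$ fails to be $\ell_0$-short satisfies $d_\T(x,x')\geq\tau_x(\alpha)$, so the quasi-isometry property gives $\tau_{f(x)}(\alpha)\leq\mul\,\tau_x(\alpha)+O(1)\leq 2\mul\tau_x$ for $\tau_1$ large enough. For the lower bound, the factor map $\phi_x^\alpha\from B_r(x_\alpha)\to\T(\alpha)$ given by \propref{Prop:Local-Factors} is a $(\mul,\add_1)$-quasi-isometry taking $x_\alpha$ (at depth $\tau_x(\alpha)\geq\tau_x/\eta$ inside the $\alpha$-horoball) to within $\Distance_1$ of $f(x)_\alpha$, which forces $\tau_{f(x)}(\alpha)\geq\tau_x(\alpha)/\mul-O(1)\geq\tau_x/(\sqrt{\mu}\,\eta)$ for suitable constants. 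The principal technical obstacle is the scale-consistency in the propagation step: verifying that $\fs$ agrees on common factors at different $R$-decompositions, which rests on the coarse uniqueness of the factorization from \thmref{Thm:Factor-Preserving}.
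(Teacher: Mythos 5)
Your argument inverts the paper's order --- you try to establish $\fs_x(\alpha)=\alpha$ first and then derive the two-sided bound on $\tau_{f(x)}(\alpha)$ --- and this inversion exposes two genuine gaps. First, the lower bound: you claim that because $\phi_x^\alpha\from B_r(x_\alpha)\to\T(\alpha)$ is a $(\mul,\add_1)$-quasi-isometric embedding carrying $x_\alpha$ to within $\Distance_1$ of $f(x)_\alpha$, it ``forces'' $\tau_{f(x)}(\alpha)\geq\tau_x(\alpha)/\mul-O(1)$. This inference does not hold: a quasi-isometric embedding of a ball of radius $r$ controls distances between pairs of points inside that ball and carries no information whatsoever about the depth of those points in the horoball $H_\alpha$; nothing prevents the image from sitting at arbitrary depth. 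Second, the choice $R=\tau_x/(4\Distance_1\eta)$ is clearly meant to force $\alpha\in\calV$ (the $\frac{R}{2\mul}$-decomposition at $f(x)$) and hence $\tau_{f(x)}(\alpha)\geq R/2\mul=\tau_x/(\sqrt{\mu}\,\eta)$, but \propref{Prop:Local-Factors} at that scale requires $d_\T(x,\LR)\geq R$, which fails whenever two adjacent curves of $P_x$ both lie outside the top group $\balpha$ --- the $R$-decomposition then has a complementary component of complexity $\geq 2$ and $d_\T(x,\LR)\leq\tau_x/(\mu\eta)<R$. Your proposed fix of ``enlarging $\calU$ by a bounded number of $\bbeta$-curves'' is not meaningful: the $R$-decomposition is determined by $x$ and $R$ alone, and $d_\T(x,\LR)$ is a property of $x$, not of any chosen decomposition.

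The paper avoids all of this by fixing the \emph{small} scale $R=\tau_1$ (so $d_\T(x,\LR)\geq R$ is automatic) and establishing the length bounds \emph{before} the factor claim: a counting argument along a path that shortens only the $\balpha$-curves shows $|\bbeta|\leq|\balpha|$, where $\bbeta$ is the set of curves with $\tau_{f(x)}\geq\tau_x/(\sqrt{\mu}\eta)$, and then \propref{Prop:Max-Preserved} applied to $f^{-1}$ at a maximal-cone point constructed on the $f(x)$ side rules out any $\alpha\in\balpha\setminus\bbeta$. Nothing in your proposal plays the role of these two steps. Only with the bounds in hand does the paper propagate $\fs_{x'}(\alpha)=\alpha$ from a cone point $x'$ back to $x$; the bounds guarantee $\alpha$ remains in the relevant decomposition at every point of the path \emph{and at its $f$-image}, which is exactly the hypothesis that lets \propref{Prop:Analytic-Continuation} chain. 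You yourself flag the changing $R$-decomposition along the pinching path as ``the principal technical obstacle'' and then wave it away via ``essential uniqueness of the factorization''; no such uniqueness statement is proved in the paper, and \propref{Prop:Analytic-Continuation} explicitly assumes a common $R$-decomposition across each step.
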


\begin{proof}
For $\tau_1$ large enough,
\begin{align*}
 \tau_{f(x)}(\alpha) & \leq d_\T(f(x), \Tt) \leq \mul d_\T(x, \Tt) + \add + \Dt\\
   & \leq \mul \tau_x + \add + \Dt \leq 2 \mul \tau_x.
\end{align*}
Hence, we have the upper-bound. We also require that 
\[
\tau_1>32\Distance_1\mul \eta_m/\rho_1,
\] 
where $\eta_m \geq \eta$ is from \lemref{Lem:Group}. Let $r=\rho_1\tau_1$ and 
let $y = f(x)$.  Let $\bbeta$ be the set of curves $\beta$ with
$\tau_{y}(\beta) \geq \frac{\tau_x}{\sqrt\mu\eta}$.

\subsection*{Claim} We have
\[
|\bbeta| \leq |\balpha|. 
\]

\begin{proof}[Proof of Claim] \renewcommand{\qedsymbol}{$\blacksquare$}
The proof is essentially the same at the proof of \propref{Prop:Max-Preserved}. 
Choose a path $g$ that changes the length of curves in $\balpha$ only
connecting $x$ to a point $x'$ so that, for $\alpha \in \balpha$, 
$\tau_{x'}(\alpha) = \frac{\tau_x}{\eta \mu}$ and $d_\T(x, x') \leq \tau_x$. 
Since all other curves in $x$ are already shorter that $\frac{\tau_x}{\eta \mu}$,
we have 
\[
\tau_{x'} = \frac{\tau_x}{\eta \mu}.
\] 
We can cover $g$ with points 
\[
x= x_1, \ldots, x_N = x',
\]
so that $d_\T(x_i, x_{i+1}) \leq r $ and $N = \frac{\tau_x}{r}$. Let $k = | \balpha |$
and $y' = f(x')$. Then, as in the proof of \propref{Prop:Max-Preserved}, 
only the length of  $k$ curves can changes substantial from $y$ to $y'$. More
precisely, if $\bbeta$ has more than $k$ curves, then there is $\beta \in \bbeta$ 
so that 
\[
\big| \tau_y(\beta) - \tau_{y'}(\beta) \big|  \leq 2 N \Distance_1. 
\]
Also, $\tau_{y'} \leq 2 \mul \tau_{x'} = \frac{2 \mul \tau_x}{\eta \mu}$. Hence, 
\[
\tau_y(\beta) \leq \tau_{y'}(\beta) + 2 N \Distance_1 
\leq \frac{2 \mul \tau_x}{\eta \mu} + \frac{2 \Distance_1 \tau_x}r 
\leq \frac{\tau_x}{3\sqrt \eta \mu}.
\]
This  is a contradiction to the assumption $\beta \in \bbeta$. This proves the claim. 
\end{proof}

We now show that, in fact, $\balpha = \bbeta$. Assume there is a curve 
$\alpha \in \balpha - \bbeta$. Let $w$ be a point so that
\[
\forall \beta \in \bbeta \quad w_\beta = y_\beta,  \qquad
\forall \beta \in P_y - \bbeta \quad \tau_w(\beta) = 1
\]
and
\[
d_\T(y,w) \leq \frac{\tau_x}{\sqrt \mu \eta}. 
\]
Let $P$ be a pants decomposition containing $\bbeta$ so that
\[
\I(P, \alpha) \ne 0, \qquad\text{and}\qquad
\forall \beta' \in P - \bbeta \quad \Ext_w(\beta') \leq \Bers. 
\]
Let $y' \in \MC(P, \sqrt \mu \eta)$ be a point obtained from $w$ by pinching curves 
$\beta' \in  P - \bbeta$ until $\tau_{y'}(\beta') = \tau_y/\sqrt \mu \eta$. Then 
\[
d_\T(w, y') \leq  \frac{\tau_y}{\sqrt \mu \eta} \leq \frac{2\mul \tau_x}{\sqrt \mu \eta}
\qquad\text{and hence}\qquad
d_\T(y, y') \leq \frac{(2 \mul +1) }{\sqrt \mu \eta} \tau_x .
\]

By \propref{Prop:Max-Preserved}, all curves in $P$ are still short in $x'=f^{-1}(y')$. 
This means, $\alpha$ is not short, and hence
\[
d_\T(x, x') \geq \tau_{x}(\alpha) \geq  \frac{\tau_x}{\eta}. 
\]
But we also have
\[
d_\T(x, x') \le \mul d_\T(y, y') + \add 
\leq \frac{(2 \mul^2 +\mul) }{\sqrt \mu \eta} \tau_x +\add.
\]
This is a contradiction which proves $\balpha \subset \bbeta$. This and the claim
imply $\balpha = \bbeta$. 

We now show $\fs_x(\alpha) = \alpha$ for $\alpha \in \balpha$. 
This essentially follows from \corref{Cor:f-star}. Let $g$ be a path connecting
$x$ to a point $x' \in \MC(P_x, \eta_0)$ that is constant in the $\alpha$
coordinate (that is, changes the length of all curves until they are comparable
to $\alpha$). Note that $\tau_{x'}$ is large enough that \corref{Cor:f-star} applies. 
But, by \corref{Cor:f-star} $\fs_{x'}(\alpha) = \alpha$. And as,
we have argued in the proof of claim in \propref{Prop:Max-Preserved},
as we move along $g$ from $x'$ to $x$, $\alpha$ remains short both in $g(t)$ and 
$f\big( g(t)\big)$.  Hence, \propref{Prop:Analytic-Continuation} applies to 
all points along this path and $\fs_x(\alpha) = \alpha$ as well. We are done. 
\end{proof}

\section{Applying induction} 
\label{Sec:Induction}
We start by proving the base case of induction
(\thmref{Thm:Induction}). Note that when $\xi(\Sigma) =1$, the surface $\Sigma$
is connected and is either a punctured torus or a four-times punctured sphere and
$\T(\Sigma) = \H$. 
\begin{proposition} \label{Prop:Base-Case}
Assume $\xi(\Sigma)=1$ and $\mul_\Sigma$ and $\add_\Sigma$ are given. 
Then \thmref{Thm:Induction} holds for $\Length_\Sigma=0$ and some constant 
$\Distance_\Sigma$. 
\end{proposition}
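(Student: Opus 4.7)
Since $\xi(\Sigma)=1$, the surface $\Sigma$ is connected (either a once-punctured torus or a four-times-punctured sphere) and $\T(\Sigma)$ is isometric to the hyperbolic plane $\H$. Under this identification, $\T(\Sigma,L)$ appears as the complement in $\H$ of the union of open horoballs of depth exceeding $L$ around each cusp of $\Sigma$, and $\partial_L(\Sigma,L)$ is the union of the bounding horocycles at depth exactly $L$. The plan is to exploit the Gromov hyperbolicity of $\H$ together with the anchoring hypothesis, through a fellow-traveling argument using geodesic segments whose endpoints we can control.

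For a given $x\in\T(\Sigma,L)$, I will first produce two geodesic segments $\gamma_1,\gamma_2$ through $x$ whose four endpoints lie on $\partial_L(\Sigma,L)$ and whose supporting lines cross at $x$ transversely (say, at angle $\pi/2$). Concretely, pick two antipodal pairs of unit tangent vectors at $x$, mutually at angle $\pi/2$, and let $x_i^\pm$ denote the first exit points on $\partial_L(\Sigma,L)$ of the four corresponding geodesic rays. The rays that never exit -- i.e.\ those whose forward trajectory avoids every truncated cusp and stays in the thick part of $\H$ forever -- form a subset of the circle of directions at $x$ of Lebesgue measure zero (they correspond to directions converging to badly approximable points of $\partial_\infty\H$). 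Hence four such directions can always be chosen, and because each pair is antipodal, $\gamma_i=[x_i^-,x_i^+]$ is an honest geodesic in $\H$ contained in $\T(\Sigma,L)$.

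With $\gamma_1$ and $\gamma_2$ in hand, the rest is a standard fellow-traveling estimate. The anchoring hypothesis gives $d_\T\big(f_\Sigma(x_i^\pm),x_i^\pm\big)\leq\add_\Sigma$, and the thinness of geodesic triangles in $\H$ then forces $[f_\Sigma(x_i^-),f_\Sigma(x_i^+)]$ to lie within Hausdorff distance $C_1=C_1(\add_\Sigma)$ of $\gamma_i$. The image $f_\Sigma(\gamma_i)$ is a $(\mul_\Sigma,\add_\Sigma)$-quasi-geodesic between the same endpoints, so the Morse lemma places it within Hausdorff distance $\kappa=\kappa(\mul_\Sigma,\add_\Sigma)$ of $[f_\Sigma(x_i^-),f_\Sigma(x_i^+)]$. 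Combining these, $f_\Sigma(x)\in f_\Sigma(\gamma_i)$ lies within $\kappa+C_1$ of each $\gamma_i$. A standard transversality estimate in $\H$ bounds the diameter of the intersection of the $D$-neighborhoods of two geodesics meeting at angle $\geq\pi/3$ by a constant $C_2(D)$, and applying it yields
\[
d_\T\big(x,f_\Sigma(x)\big)\leq C_2(\kappa+C_1)=:\Distance_\Sigma,
\]
uniformly in $x$ and depending only on $\mul_\Sigma$ and $\add_\Sigma$.

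The main obstacle I expect is Step~1, producing the pair $\gamma_1,\gamma_2$ uniformly: when $x$ lies deep in the thick part, far from any truncated horoball, the segments $\gamma_i$ may be very long and their lengths depend on $x$ in an uncontrolled way. What saves the argument is that in the $\delta$-hyperbolic space $\H$ the fellow-traveling constants $C_1$, $\kappa$, and the transversality constant $C_2$ are all independent of the length of the geodesics involved, so the final bound is genuinely uniform. Consequently the statement holds with $\Length_\Sigma=0$ and a constant $\Distance_\Sigma$ depending only on $\mul_\Sigma$ and $\add_\Sigma$, as claimed.
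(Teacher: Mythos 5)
Your argument is correct and follows essentially the same route as the paper's: pick two transverse geodesic segments through the given point with endpoints on $\partial_L(\Sigma,L)$, use the anchoring hypothesis together with quasi-geodesic stability and thin triangles in $\H$ to see that their images fellow-travel, and then use transversality to pin down $f_\Sigma(x)$ near $x$. The measure-theoretic observation (that non-exiting directions correspond to a Lebesgue-null set of badly approximable endpoints) is a valid way to supply the existence of the segments, a point the paper leaves implicit.
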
 

\begin{figure}[ht]
\setlength{\unitlength}{0.01\linewidth}
\begin{picture}(100, 30)

\put(35,0){
  \begin{tikzpicture}
   [thick, 
    scale=.5\unitlength,
    vertex/.style={circle,draw,fill=black,thick,
                   inner sep=0pt,minimum size=1mm}]
                   
  \draw[fill=gray!40]  (0,0) circle (1);

  \node [draw, fill=white] at (0: .76) [circle through={(0: 1)}] {};
  \node [draw, fill=white] at (90: .76) [circle through={(90: 1)}] {};
  \node [draw, fill=white] at (180: .76) [circle through={(180: 1)}] {};
  \node [draw, fill=white] at (-90: .76) [circle through={(-90: 1)}] {};

  \node [draw, fill=white] at (45: .87) [circle through={(45: 1)}] {};
  \node [draw, fill=white] at (135: .87) [circle through={(135: 1)}] {};
  \node [draw, fill=white] at (-45: .87) [circle through={(-45: 1)}] {};
  \node [draw, fill=white] at (-135: .87) [circle through={(-135: 1)}] {};

  \node [draw, fill=white] at (29: .94) [circle through={(29: 1)}] {};
  \node [draw, fill=white] at (61: .94) [circle through={(61: 1)}] {};
  \node [draw, fill=white] at (119: .94) [circle through={(119: 1)}] {};
  \node [draw, fill=white] at (151: .94) [circle through={(151: 1)}] {};
  \node [draw, fill=white] at (-29: .94) [circle through={(-29: 1)}] {};
  \node [draw, fill=white] at (-61: .94) [circle through={(-61: 1)}] {};
  \node [draw, fill=white] at (-119: .94) [circle through={(-119: 1)}] {};
  \node [draw, fill=white] at (-151: .94) [circle through={(-151: 1)}] {};

  \node [draw, fill=white] at (70: .975) [circle through={(70: 1)}] {};
  \node [draw, fill=white] at (20: .975) [circle through={(20: 1)}] {};
  \node [draw, fill=white] at (110: .975) [circle through={(110: 1)}] {};  
  \node [draw, fill=white] at (160: .975) [circle through={(160: 1)}] {};
  \node [draw, fill=white] at (-70: .975) [circle through={(-70: 1)}] {};
  \node [draw, fill=white] at (-20: .975) [circle through={(-20: 1)}] {};
  \node [draw, fill=white] at (-110: .975) [circle through={(-110: 1)}] {};  
  \node [draw, fill=white] at (-160: .975) [circle through={(-160: 1)}] {};

  \draw (29: .88) -- (-151: .88); 
  \draw (0: .52) -- (180: .52); 
  \draw[red] (0: .2) arc (0:29: .2); 
  
  \node[vertex] (z) at (0,0) {}; 
        
  \end{tikzpicture}}

\put(29, 25){$\T(\Sigma, L)$} 
\put(49.5, 12){$z$} 
\put(44,9){$g_2$}
\put(44,16){$g_1$}
\put(55, 15){$\theta$} 

\end{picture}

\caption{The geodesics $g_1$ and $g_2$ pass through $z$, have their end points 
on $\partial_L(\Sigma)$ and the angle $\theta$ between them is of a definite size.}
\label{Fig:Base-Case}
\end{figure}
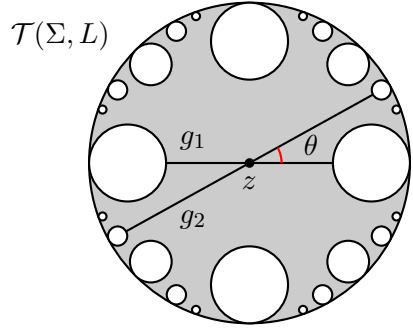

\begin{proof}
Let $L \geq \Length_\Sigma = 0$ be given. Consider a point $z \in \T(\Sigma, L)$. 
For $i=1,2$, let $g_i \from [a_i, b_i] \to \T(\Sigma, L)$ be a geodesic so that $g_i(0)=z$ 
and $g_i(a_i)$ and $g_i(b_i)$ lie on distinct $L$--horocycles and so that $g_1$ and $g_2$ 
have a definite angle between them (see \figref{Fig:Base-Case}). Define 
$\bg_i = f_\Sigma \circ g_i$. Then $\bg_i$ is a $(K_\Sigma, C_\Sigma)$--quasi-geodesic.   
Since $f_\Sigma$ is anchored,  the endpoints of $\bg_i$ are uniformly bounded distance from the endpoints of $g_i$.  This implies $\bg_i$  is contained in a uniform bounded neighborhood of $g_i$. This means,
$f(z)= \bg_i(0)$ is contained in a uniform bounded neighborhood of both $g_1$ and 
$g_2$. But, since $g_1$ and $g_2$ have a definite size angle between them, the 
diameter of this set of uniformly bounded. Hence $d_\H\big( z, f(z) \big)$ is uniformly 
bounded. 
\end{proof}

Our plan is to apply induction by removing from $\Sigma$ all components 
of complexity 1 and by cutting along the shortest curves. 

\begin{proposition} \label{Prop:Torus-Identity}
Assume $\Sigma$ has a component $W$ with $\xi(W) = 1$. 
Let $f \from \T(\Sigma, L) \to \T(\Sigma, L)$ be a 
$(\mul_\Sigma, \add_\Sigma)$--quasi-isometry that is anchored. 
Pick a large $R$ so that the statements in \secref{Sec:Local-Factors} apply.
Then, there is a constant $\Distance_W$ so that the following holds. 
Let $z \in \T=\T(\Sigma, L)$ be a point so that 
\begin{equation} \label{Eq:Far-Away} 
d_\T(z, \LR) \geq R
\qquad\text{and}\qquad
d_\T\big( z, \partial_L(\Sigma) \big) \geq R. 
\end{equation}
Then 
\[
d_{\T(W)}(z_W, f(z)_W) \leq \Distance_W. 
\]
\end{proposition}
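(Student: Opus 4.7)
The plan is to reduce to the base case \propref{Prop:Base-Case} on $\T(W)=\H$ by extracting from $f$ an induced, anchored quasi-isometry of $\T(W)$, and then concluding that this induced map is a bounded distance from the identity. Throughout, write $\Sigma = W \sqcup \Sigma''$ and assume $\Sigma''$ is nonempty (otherwise this is exactly \propref{Prop:Base-Case}).

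The starting point is that for $R$ sufficiently large, the hypotheses on $z$ guarantee that the $R$--decomposition $\calU$ at $z$ is maximal and contains $W$ as one of its pieces (by convention, since $\xi(W)=1$), and \propref{Prop:Local-Factors} applies. Since $\xi(W)=1$, \corref{Cor:xi=1} gives $\fs_z(W)=W$, and \propref{Prop:Local-Factors} produces a $(\mul,\add_1)$--quasi-isometry
\[
\phi_z^W\from B_r(z_W)\to\T(W)
\qquad\text{with}\qquad
d_{\T(W)}\big(\phi_z^W(z_W),f(z)_W\big)\le\Distance_1.
\]
So bounding $d_{\T(W)}(z_W,f(z)_W)$ is equivalent (up to $\Distance_1$) to bounding $d_{\T(W)}(z_W,\phi_z^W(z_W))$.

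Next, I would use \propref{Prop:Analytic-Continuation} to glue the local factor maps into a coarsely well-defined map $\Phi^W$ on a large subset $\Omega\subset\T(W,L)$. Concretely, for $u\in\Omega$ one fixes a lift $z_u\in\T(\Sigma,L)$ with $(z_u)_W=u$ which sits far from $\LR$ and from $\partial_L(\Sigma)$, for example by taking $z_u=(u,p)$ with $p\in\T(\Sigma'',L)$ a carefully chosen maximal-rank point, and sets $\Phi^W(u):= f(z_u)_W$. By analytic continuation along paths that keep the $W$--coordinate fixed, different choices of the lift give values differing by a uniformly bounded amount, and the product region theorem combined with the QI property of $f$ makes $\Phi^W$ a $(K',C')$--quasi-isometry of $\T(W,L)$ with constants depending only on $\mul_\Sigma,\add_\Sigma$ and the topology. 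Crucially, for $u\in\partial_L(W)$ one can choose the lift so that $(z_u)_{\Sigma''}\in\partial_L(\Sigma'')$; then $z_u\in\partial_L(\Sigma)$, the anchoring hypothesis gives $d_\T(f(z_u),z_u)\le\add_\Sigma$, and projecting to $\T(W)$ yields $d_{\T(W)}(\Phi^W(u),u)\le\add_\Sigma$. Thus $\Phi^W$ is anchored on $\partial_L(W)$.

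Once $\Phi^W$ is identified as an anchored quasi-isometry of $\T(W,L)\subset\H$, \propref{Prop:Base-Case} (applied to the $\xi=1$ surface $W$) gives a constant $\Distance_W'$ with $d_{\T(W)}(\Phi^W(u),u)\le\Distance_W'$ for every $u$. Applying this to $u=z_W$ with the lift $z_u=z$ itself (or comparing $f(z)_W$ with $f(z_u)_W=\Phi^W(z_W)$ by one more analytic-continuation step in the $\Sigma''$--coordinate) yields the desired bound $d_{\T(W)}(z_W,f(z)_W)\le\Distance_W$.

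The main obstacle is the well-definedness step: \propref{Prop:Analytic-Continuation} only controls consecutive $\phi$--maps up to additive error $2\Distance_1$ per $r$--step, and naively iterating along a path in the $\Sigma''$--direction that might have length comparable to $L$ yields an error proportional to $L/r$. To obtain an $L$--independent constant $\Distance_W$, one must avoid chaining along long paths. My plan to do this is to exploit the product structure: rather than compare two arbitrary lifts $z_u^1,z_u^2$ directly by a long chain in $\Sigma''$, one shows that the $W$--factor of $f$ depends essentially only on $u$ by using local factoring transverse to the path together with the fact that both endpoints can simultaneously be joined to common boundary points where anchoring pins down the value. This is analogous to the bootstrap argument used in \corref{Cor:xi=1} and in the proof of \propref{Prop:Short-Curve}, where the same mechanism prevents error accumulation by returning to the boundary at every stage rather than chaining interior estimates.
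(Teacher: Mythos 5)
Your plan is in the same spirit as the paper's, but the step you yourself flag as the main obstacle is a genuine gap, and the fix you sketch does not close it. To apply \propref{Prop:Base-Case} to $\Phi^W$ you need $\Phi^W$ to be simultaneously (i) a quasi-isometry of $\T(W,L)$ with $L$-independent constants and (ii) anchored with $L$-independent additive error. For (i), the lift $u\mapsto z_u=(u,p_u)$ must have essentially constant second coordinate, or else the product-region comparison of $d_{\T(\Sigma)}(z_u,z_{u'})$ with $d_{\T(W)}(u,u')$ fails. For (ii) you need $p_u\in\partial_L(\Sigma'')$ whenever $u\in\partial_L(W)$; and to relate $\Phi^W(z_W)$ to the quantity $f(z)_W$ you actually care about, you must also allow $p_{z_W}=z_{\Sigma'}$, which by hypothesis is at distance $\geq R$ from $\partial_L(\Sigma'')$. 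Any path joining these two $\Sigma''$-lifts has length comparable to $L$, so chaining \propref{Prop:Analytic-Continuation} costs on the order of $L\Distance_1/r$, which is not uniformly bounded in $L$. Your remedy --- ``returning to the boundary at every stage'' --- is not developed, and as far as I can see any such scheme still pays this price somewhere.

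The paper never constructs a global $\Phi^W$. It freezes the $\Sigma'$-coordinate at $z_{\Sigma'}$ and considers $\bg_i(t):=f\big(g_i(t),z_{\Sigma'}\big)_W$ for geodesics $g_i$ through $z_W$ that run out to $\partial_L(W)$. Claim~1 shows each $\bg_i$ is a quasi-geodesic with uniform constants: the chaining cost in the $\Sigma'$-factor over a segment of length $|t-s|$ is on the order of $|t-s|\Distance_1/r$, which is absorbed because it is a small fraction of $|t-s|$ once $r=\rho_1 R$ is large. Claim~2 then \emph{does} accept an $L$-dependent bound: chaining out to $\partial_L(\Sigma)$ in the $\Sigma'$-direction and invoking the anchoring hypothesis pins $\bg_i(a_i),\bg_i(b_i)$ to within $\sim L\Distance_1/r + O(R)$ of $g_i(a_i),g_i(b_i)$. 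The observation your framing misses is that this $L$-dependent endpoint error is harmless precisely because it is a small fraction of the geodesic length: by $\delta$-hyperbolicity of $\T(W)=\H$, a uniform quasi-geodesic whose endpoints lie within $cL$ ($c\ll 1$) of those of a length-$\sim L$ geodesic $g$ is $O(1)$-close to $g$ on the middle portion, and $\bg_i(0)=f(z)_W$ lies in that middle portion. Running this for two geodesics through $z_W$ making a definite angle --- the mechanism of \propref{Prop:Base-Case}, not its statement --- localizes $f(z)_W$ to an $O(1)$-neighborhood of $z_W$. In short, you do not need an anchored quasi-isometry of $\T(W,L)$; you need an endpoint-anchoring error that is a small \emph{fraction} of the geodesic length, and hyperbolicity then does the rest.
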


\begin{proof}
Let $\Sigma'=\Sigma - W$. We denote a point $x \in \T(\Sigma)$ as a tuple
$(x_W, x_{\Sigma'})$. Let $r= \rho_1 R$. 

\subsection*{Claim 1} Let $g \from [a,b] \to \T(W)$ be a geodesic that
stays a distance at least $R$ from $\partial_L(W)$ and let
\[
x_t= \big( g(t), z_{\Sigma'}\big) \in \T(\Sigma, L).
\]
Define
\[
\bg(t) = f\big( x_t \big)_W. 
\]
Then $\bg$ is a quasi-geodesic.

\begin{proof}[Proof of Claim 1] \renewcommand{\qedsymbol}{$\blacksquare$}
To see the upper-bound, we note that, for times $s$ and $t$, 
\begin{align*}
d_{\T(W)} \big( \bg(t), \bg(s) \big) 
  & = d_{\T(W)} \Big( f(x_t )_{W}, f(x_s)_{W}\Big) 
   \leq d_{\T(\Sigma)} \big( f(x_t), f(x_s) \big) \\
  &\leq \mul \, d_{\T(\Sigma)} \big( x_t, x_s \big)+ \add 
  = \mul  \, |t-s| + \add.
\end{align*} 
We now check the lower bound. 
Pick a sequence of points in $\T(\Sigma, L)$
\[
x_s = x_1, \ldots, x_N = x_t
\]
so that $d_{\T(\Sigma)}(x_i,x_{i+1}) \leq r$ and $N \leq \frac{|t-s|}r+1$. 
Since $W$ is always a factor in any decomposition, we know from 
\propref{Prop:Local-Factors} that 
\[
d_{\T(\Sigma')} \Big(f(x_i)_{\Sigma'}, f(x_{i+1})_{\Sigma'} \Big) \leq 2 \Distance_1. 
\]
Therefore, (assuming $r \geq 4 \mul \Distance_1$) 
\[
 d_{\T(\Sigma')} \Big( f(x_s)_W, f(x_t)_W \Big) \leq 2 N \Distance_1
 \leq \frac{|t-s|}{2\mul} + 2\Distance_1.
\]
Now the desired lower bound in the claim follows:
\begin{align*}
d_{\T(W)} \big( \bg(s), \bg(t) \big) 
  & = d_{\T(W)} \big( f(x_s)_{W}, f(x_t)_{W}\big) \\
  & \geq d_{\T(\Sigma)} \big( f(x_s), f(x_t) \big) 
      - d_{\T(\Sigma')} \big( f(x_s)_{\Sigma'}, f(x_t)_{\Sigma'} \big)  \\
  &\geq \frac 1\mul \, d_{\T(\Sigma)} \big( f(x_s) , f(x_t) \big) 
      - \add  - \frac {|t-s|}{2\mul} - 2\Distance_1 \\ 
   &= \frac {|t-s|}{2\mul} - (\add+2\Distance_1). \qedhere
\end{align*} 
\end{proof}

Next, we show that, if the end points of $g$ are close to $\partial_L(W)$ then 
the end points of $\bg$ are close  to the end points of $g$ which would imply 
exactly as in the proof of Proposition~\ref{Prop:Base-Case} that $\bg$ stays near $g$. That is, the reader should think of $w$ below as
an end point of $g$. 

\subsection*{Claim 2} Let $w \in \T(W)$ be a point so that 
$d_{\T(W)}\big(w, \partial_L(W)\big) = R$. Then
\begin{equation} \label{Eq:End}
d_{\T(W)}\Big( f\big(w, z_{\Sigma'}\big)_W, w \Big) 
   \leq \frac{2L\Distance_1}{r} + (\mul +1) R + 2\add. 
\end{equation}

\begin{proof}[Proof of Claim 2] \renewcommand{\qedsymbol}{$\blacksquare$}
We choose a sequence of points in $\T(\Sigma')$
\[
z_{\Sigma'} = u_1, \ldots, u_N.
\]
where 
\[
d_{\T(\Sigma')}\big(\partial_L(\Sigma'), u_N \big)=R 
\qquad\text{and}\qquad
d_{\T(\Sigma')}\big(u_N , z_{\Sigma'}\big) \leq L. 
\]
Also 
\[
d_{\T(\Sigma')}(u_i, u_{i+1}) \leq r
\qquad\text{and}\qquad
N \leq \frac{L}r.
\] 
Let $z_i = (w, u_i)$. Note that, $z_1$ is the point of interest. 
By \propref{Prop:Local-Factors}, we have
\[
d_{\T(W)}\big( f(z_i)_W, f(z_{i+1})_W \big) \leq 2 \Distance_1. 
\]
Hence, 
\[
d_{\T(W)}\big( f(z_1)_W, f(z_N)_W \big) \leq 2 N \Distance_1 \leq \frac{2L\Distance_1}{r}. 
\]
But $z_N$ is distance $R$ from some point $z_L \in \partial_L(\Sigma)$. 
Hence
\begin{align*}
d_{\T(W)}\big( w, f(z_N)_W \big) 
  &\leq d_{\T}\big( z_N, f(z_N) \big)\\
  & \leq d_{\T}( z_N, z_L ) + d_{\T}\big( z_L, f(z_L) \big) + d_{\T}\big( f(z_L), f(z_N) \big)\\ 
   & \leq R + \add + \mul R + \add \leq (\mul + 1) R + 2\add.
\end{align*}
The claim follows from the last two inequalities by the triangle inequality. 
\end{proof}

We now prove the proposition. For $i=1,2$, consider the geodesic segment
$g_i\from [a_i,b_i] \to \T(W)$ where, $g_i(0) = z_W$, $g_i(a_i)$ and $g_i(b_i)$ are 
distance $R$ from $\partial_L(W)$ and so that $g_1$ and $g_2$ make a definite size 
angle at $z_W$. By Claim 1, the paths $\bg_i$ are quasi-geodesics and 
Claim 2, provides a bound for the distance between $g_i(a_i)$
and $\bg_i(a_i)$ and also between $g_i(b_i)$ and $\bg_i(b_i)$.
We can make $g_1$ and $g_2$ to be as long as needed. If the lengths of $g_i$ 
are long enough compared with the right hand side of \eqnref{Eq:End}, this implies 
that $\bg_i$ stays in a uniform neighborhood of $g_i$ and, in particular, 
$f(z)_W=\bg_i(0)$ is near $g_i$, for $i=1,2$. But $g_1$ and $g_2$ make a 
definite size angle. Hence $f(z)_W$ is near $z_W$. We are done. 
\end{proof}

\begin{proposition} \label{Prop:Shortest-Identity} 
Assume either $\T= \T(S)$ or $\T= \T(\Sigma, L)$ and $\Sigma$ has no component 
with  complexity one. Let $f \from \T \to \T$ be a $(\mul, \add)$--quasi-isometry 
so that the restriction of $f$ to the thick part is $\Dt$ close to the identity.  
Pick a large $R$ so that the statements in \secref{Sec:Local-Factors} apply.
Then there is a constant $\Distance_{\rm Top}$ so that the following holds. 
Let $z \in \T$ be a point so that (the second condition applies only when 
$\T=\T(\Sigma, L)$) 
\begin{equation*}
d_\T(z, \LR) \geq \mu \eta R
\qquad\text{and}\qquad
d_\T\big( z, \partial_L(\Sigma) \big) \geq  R,
\end{equation*}
and let $\alpha$ be the shortest curve in $z$; $\tau_z=\tau_z(\alpha)$. Then 
\[
d_{\T(\alpha)}\big( z_\alpha, f(z)_\alpha \big) \leq \Distance_{\rm Top}. 
\]
\end{proposition}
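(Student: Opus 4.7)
The plan is to follow the structure of Proposition~\ref{Prop:Torus-Identity}, with the annulus factor $\T(\alpha)=\H$ playing the role of the complexity-one factor $\T(W)$ there. By Proposition~\ref{Prop:Short-Curve} applied to the shortest curve $\alpha$ of $z$ (which lies in the top group for the admissible scale $\eta$, since $d_\T(z,\LR)\geq\mu\eta R$), one has $\fs_z(\alpha)=\alpha$ and $\tau_{f(z)}(\alpha)$ is comparable to $\tau_z$; so the ``height'' of $f(z)_\alpha$ matches that of $z_\alpha$ and only the horizontal (twist) coordinate needs to be controlled. Let $\calU$ denote the $R$-decomposition at $z$, so $\alpha\in\calU$, and write $\calU'=\calU\setminus\{\alpha\}$.

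\textbf{Claim 1 (quasi-geodesic image).} For any geodesic $g\colon[-T,T]\to\H$ through $z_\alpha$ whose image stays in the region $\{\tau(\alpha)\geq R\}$, the lifted path $\bg(t):=f\bigl((g(t),z_{\calU'})\bigr)_\alpha$ is a quasi-geodesic in $\H$ with constants depending only on $\mul$, $\add$, and $\Distance_1$. This is proved exactly as Claim~1 of Proposition~\ref{Prop:Torus-Identity}: the upper bound is from $f$ being a quasi-isometry and $\psi_\alpha$ being $1$-Lipschitz on the product region, and the lower bound uses Proposition~\ref{Prop:Local-Factors} combined with analytic continuation (Proposition~\ref{Prop:Analytic-Continuation}) to propagate $\fs(\alpha)=\alpha$ along the path, so that motion in the $\calU'$-directions contributes only $2\Distance_1$ error per step to $\bg$.

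\textbf{Claim 2 (endpoint control).} At an endpoint $w=g(\pm T)$ chosen so that $\tau_w(\alpha)=R$ (this is possible since $\tau_z\geq\mu\eta R\gg R$), set $w_0=(w,z_{\calU'})$; I will show $f(w_0)_\alpha$ lies within a constant of $w$. Build a two-stage path from $w_0$ to a fully thick point $\bar w_0$. Stage~1 unpinches every non-$\alpha$ short curve by moving in the $\calU'$-direction only, reaching $x^\ast=(w,\bar u)$ with $\bar u$ thick in $\T(\Sigma\setminus\alpha)$; throughout this stage the $\alpha$-length is constant, so $\alpha$ stays in the $R$-decomposition, and analytic continuation plus Proposition~\ref{Prop:Local-Factors} bounds the drift of $f(\cdot)_\alpha$ by $2N\Distance_1$ with $N\lesssim\mu\eta/\rho_1$, a uniform constant. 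Stage~2 then unpinches $\alpha$ by moving a distance $R$ in the $\alpha$-direction to reach a fully thick $\bar w_0$; the hypothesis gives $d_\T\bigl(f(\bar w_0),\bar w_0\bigr)\leq\Dt$ so $f(\bar w_0)_\alpha$ is $\Dt$-close to the $\alpha$-coordinate of $\bar w_0$, and the Lipschitz property of $f$ applied over the stage-2 segment of length $R$ relates $f(x^\ast)_\alpha$ to $f(\bar w_0)_\alpha$ up to an additive $\mul R+\add$. Since $R$ is fixed, this linear-in-$R$ term is a constant absorbed into $\Distance_{\rm Top}$.

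Having established both claims, apply them to two geodesics $g_1,g_2$ in $\H$ through $z_\alpha$ with a definite angle between them, exactly as in Proposition~\ref{Prop:Base-Case}: the images $\bg_i$ are quasi-geodesics in $\H$ whose endpoints lie within a uniform constant of those of $g_i$, so by stability of quasi-geodesics in the hyperbolic plane, each $\bg_i$ stays in a uniform tubular neighborhood of $g_i$. The point $f(z)_\alpha=\bg_i(0)$ is therefore close to both $g_1$ and $g_2$, and the lower bound on their angle at $z_\alpha$ forces the intersection of these neighborhoods to have bounded diameter, pinning $f(z)_\alpha$ near $z_\alpha$. The main obstacle is Claim~2: unlike in Proposition~\ref{Prop:Torus-Identity} where the anchoring hypothesis immediately pins down the endpoint projection, here the path to the thick region must cross a transition where $\alpha$ changes from short to non-short, at which point the factoring of Proposition~\ref{Prop:Local-Factors} ceases to apply, and one is forced to accept an $O(R)$-sized error and rely on $f$ being $\Dt$-close to the identity on the thick part to close the argument.
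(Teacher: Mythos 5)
Your overall strategy (two transverse quasi-geodesic shadows through $z_\alpha$, pinned down at their endpoints using the thick-part hypothesis, combined with stability in $\H$) matches the paper's. But the quantitative heart of the argument—how far the geodesic $g$ dips toward the boundary of the horoball, and what error you incur controlling its endpoints—is where your proposal goes wrong, and the errors are not peripheral.

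First, your estimate $N\lesssim\mu\eta/\rho_1$ in Stage~1 of Claim~2 is incorrect. The distance from $z_{\Sigma'}$ to the thick part of $\T(\Sigma')$ is governed by the second-shortest curve of $z$, whose $\tau$-value can be anywhere between $\mu\eta R$ and $\tau_z$; generically it is comparable to $\tau_z$. So $N$ is of order $\tau_z/r$, not a uniform constant, and the Stage~1 drift $2N\Distance_1\approx 2\Distance_1\tau_z/r$ grows linearly in $\tau_z$. Your closing remark that the endpoint error is ``a constant absorbed into $\Distance_{\rm Top}$'' is therefore false; the paper never claims the endpoint error is a constant. Instead it accepts an error that is $\left(\frac{2\Distance_1}{r}+\frac{\mul+1}{\eta_0}\right)\tau_z + O(1)$ and observes that this is small compared to the length of $g$ (which is $\geq\tau_z$), which is what stability of quasi-geodesics actually needs. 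This is the step missing from your write-up.

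Second, and more seriously, your Stage~1 attempts to walk all the way to a fully thick $\bar u\in\T(\Sigma')$ while keeping $\tau(\alpha)=R$ frozen, and claims $2\Distance_1$ drift per step via Propositions~\ref{Prop:Local-Factors} and \ref{Prop:Analytic-Continuation} throughout. These propositions require $d(\cdot,\LR)\geq R$. Once you start unpinching the other short curves and some of them drop below $\tau=R$, the $R$-decomposition is no longer maximal on a whole $r$-ball and those propositions cease to apply. At the terminal point $(w,\bar u)$ with $\bar u$ thick, $\alpha$ is the only short curve, and as soon as $\xi\geq 3$ a complementary component of $\alpha$ has complexity $\geq 2$, so $(w,\bar u)$ is literally in $\LR$. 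So the drift estimate cannot be run over the whole Stage~1 path. (You half-notice this at the end — ``the path to the thick region must cross a transition\ldots'' — but you do not propose a fix, and the fix changes the estimates materially.)

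The paper avoids both problems by a different endpoint and a different stopping rule. The endpoints of $g$ are taken at $\tau(\alpha)=\tau_z/\eta_0$ rather than at $\tau(\alpha)=R$; since $\tau_z\geq\mu\eta R$ this is $\geq\mu R\gg R$, which (a) keeps $\alpha$ in the top group along $g$, so Proposition~\ref{Prop:Short-Curve} continues to give $\fs_{x^t}(\alpha)=\alpha$, and (b) leaves enough room that the $\Sigma'$-walk can be truncated at distance $\tau_z/\eta_0$ from the thick part of $\T(\Sigma')$, never leaving the region where Proposition~\ref{Prop:Local-Factors} applies. The residual gap of size $\tau_z/\eta_0$ is then bridged by a single application of the Lipschitz bound and the $\Dt$-closeness on the thick part, yielding a term $(\mul+1)\tau_z/\eta_0+\add+\Dt$ that is again small compared to $\tau_z$ because $\eta_0$ is chosen large relative to $\mul$. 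Your choice $\tau_w(\alpha)=R$ forecloses this bookkeeping and is also borderline for Proposition~\ref{Prop:Analytic-Continuation}, which needs the $R$-decomposition to be constant on $r$-balls around the waypoints. To repair the argument you should set the geodesic endpoints at $\tau(\alpha)=\tau_z/\eta_0$, stop the $\Sigma'$-walk at distance $\tau_z/\eta_0$ from the thick part, and replace ``error is a constant'' with ``error is a small multiple of $\tau_z$, which is small compared to the length of $g$.''
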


\begin{proof}
The proof is essentially the same as the proof of \propref{Prop:Torus-Identity}. 
Let $\Sigma' = S-\alpha$ or $\Sigma - \alpha$. For any geodesic 
$g \from [a,b] \to \T(\alpha)$ so that $g(0) = z_\alpha$ that stays $\tau_z/\eta_0$ away 
from $\LR$ and $\partial_L(\alpha)$, we let $x^t$ to be a point in $\T$ that projects to 
$g(t)$ in $\T(\alpha)$ and has the same projection to $\Sigma'$ as $z$. Then $\alpha$ 
is still in the top group of $x^t$. Define 
\[
\bg(t) = f(x^t)_\alpha. 
\]
Assuming $\Length_\Sigma$ is large enough, such geodesics exist. 
Also, by \propref{Prop:Short-Curve}, $\fs_{x^t}(\alpha)=\alpha$. 
It can be shown, as in claim 1 in \propref{Prop:Torus-Identity}, that 
$\bg$ is a quasi-geodesic with uniform constants. We choose the end points of $g$ 
so that $\tau_{x^a}(\alpha) = \tau_{x^b}(\alpha) = \tau_z/\eta_0$.

Choose a sequence of points in $\T(\Sigma')$
\[
z_{\Sigma'}=u_1, \ldots, u_N, 
\qquad
d_{\T(\Sigma')} \big( u_N, \Tt(\Sigma')\big) =\frac{\tau_z}{\eta_0}
\]
similar to claim 2 in \propref{Prop:Torus-Identity} and let $z^i \in \T(S)$ be a point 
so that
\[
z^i_\alpha = g(a)
\qquad\text{and}\qquad
z^i_{\Sigma} = u_i. 
\]
As before, we have (here $\tau_z$ plays the role of $L$ in claim 2 above)
\[
d_{\T(\alpha)} \Big( f(z^1)_\alpha, f(z^N)_\alpha \Big) 
   \leq \frac{2 \tau_z \Distance_1}{r}.
\]
Also, using the fact that points in the thick part move by at most $\Dt$
and $z^N$ has a distance $\frac{\tau_z}{\eta_0}$ to the thick part, by the triangle inequality, as in \propref{Prop:Torus-Identity},  we have
\[
d_{\T(\alpha)} \big( f(z^N)_{\alpha}, g(a) \big)  \leq 
d_\T\big( f(z^N), z^N) \leq (\mul+1) \frac{\tau_z}{\eta_0}  + \add + \Dt. 
\]
Noting that $f(z^1_\alpha) = \bg(a)$, by the triangle inequality, we have 
\begin{equation} \label{Eq:tau_z} 
d_{\T(\alpha)}\big(  \bg(a), g(a) \big) \leq 
   \left(\frac{2 \Distance_1}{r} + \frac{\mul + 1}{\eta_0} \right) \tau_z + (\add + \Dt). 
\end{equation} 
The same holds for $\bg(b)$ and $g(b)$. Note that $a < 0 < b$ and $|a|$ and $b$ 
are larger than $\tau_z/2$.  Assuming $\eta_0$ is large compared with $\mul$, and 
$\tau_z$ is large compared with the additive error in \eqnref{Eq:tau_z}, we have 
that the distance between the end points of $\bg$ and $g$ is much less than the 
length of $g$, and hence, $\bg(0)$ is uniformly close to $g$. 

Taking two such geodesics passing through $z_\alpha$ that make a definite 
angle with each other, we have $f(z)_\alpha$ is uniformly close to $z_\alpha$. 
\end{proof}

\begin{proof}[Proof of \thmref{Thm:Induction}] 
Let $z \in \T(\Sigma, L)$ be a point satisfying \eqnref{Eq:Far-Away}. 
Let $\Sigma'$ be the surface obtained from $\Sigma$ after removing
all subsurfaces of complexity one. Let $\alpha$ be the shortest curve 
in $z_{\Sigma'}$. Let $\Sigma'' = \Sigma' - \alpha$. 

By \propref{Prop:Torus-Identity}, for any component $W$ of $\Sigma$ with
$\xi(W) =1$ we have
\begin{equation}
\label{eq:W:small}
d_{\T(W)}\big( z_W, f(z)_W \big) \leq \Distance_W. 
\end{equation}
For $x' \in \T(\Sigma', L)$, the point $z'=(z_W, x')$ is point in $\T(\Sigma, L)$. 
Define for some $\bar L$, 
\[
f_{\Sigma'} \from \T(\Sigma', L) \to \T(\Sigma', \bar L) 
\]
by
\[
f_{\Sigma'} (x') = f(z')_{\Sigma'}.
\]
We will show that $f_{\Sigma'}$ is a quasi-isometry. It will also  turn out that 
$|\bar L-L |=O(1)$.  We first show that $f_{\Sigma'}$ is coarsely onto. This follows from 
\eqnref{eq:W:small} which says that under the map  $f$  the slice 
$\big\{(z_W,x'):x'\in \T(\Sigma',L)\big\}$ is mapped a bounded distance from itself. 
The same is true for $f^{-1}$ and the coarse onto statement follows. 

For $x', x'' \in \T(\Sigma', L)$, 
let $z' = (x', z_W)$ and $z'' = (x'', z_W)$. Then 
\begin{align*}
d_{\T(\Sigma')} \Big( f_{\Sigma'}(x'), f_{\Sigma'}(x'') \Big) 
  & = d_{\T(\Sigma')} \Big( f(z')_{\Sigma'}, f(z'')_{\Sigma'}\Big) \\
  & \leq d_{\T(\Sigma)} \Big( f\big(z'\big), f\big(z''\big) \Big) \\
  &\leq \mul \, d_{\T(\Sigma)} \big( z' ,z'' \big) + \add 
  = \mul  \, d_{\T(\Sigma')} \big( x',x'' \big) + \add.
\end{align*} 
Hence, we only need to check the lower bound. It is enough to prove this 
for points $x'$ and $x''$ that have a distance of at least $R$ from $\LR(\Sigma')$ and 
$\partial_L(\Sigma')$. Assuming $\Length_\Sigma$ is large enough, such points 
exists and form a connected subset of $\T(\Sigma', L)$. 
But, we have shown that 
\[
d_{T(W)} \big( f(z')_W , z'_W\big) \leq \Distance_W
\qquad\text{and}\qquad
d_{T(W)}( f(z'')_W , z''_W) \leq \Distance_W.
\]
Hence, 
\begin{align*}
d_{\T(\Sigma')} \Big( f_{\Sigma'}(x'), f_{\Sigma'}(x'') \Big) 
  & = d_{\T(\Sigma')} \Big( f(z')_{\Sigma'}, f(z'')_{\Sigma'} \Big) \\
 & \geq d_{\T} \big( f(z'), f(z'') \big) 
   - d_{\T(W)} \big( f(z')_{W}, f(z'')_{W} \big) \\
 & \geq \frac{1}{\mul}d_\T(z', z'') - \add - 2 \Distance_W \\
  & \ge \frac{1}{\mul}d_{\T(\Sigma')} (x', x'') - (\add + 2 \Distance_W). 
\end{align*}
That is, $f_{\Sigma'}$ is a $(\mul_{\Sigma'}, \add_{\Sigma'})$--quasi-isometry
for $\mul_{\Sigma'}= \mul$ and $\add_{\Sigma'} = \add + 2 \Distance_W$. 

Now, Propositions~\ref{Prop:Max-Preserved} and \ref{Prop:Ivanov} apply. In fact, 
since $f$ is anchored, we can conclude that $\fsP$ is the identity. 
Hence, \propref{Prop:Thick}, implies that $f_{\Sigma'}$ is in fact $\Dt$ close to identity 
in the thick part of $\T(\Sigma', L)$. Therefore, 
\begin{equation}
\label{eq:shortcurve}
d_{\T(\alpha)}\big( z_\alpha, f(z)_\alpha \big) \leq \Distance_{\rm Top}. 
\end{equation}

  Let $L'' = \tau_{z_{\Sigma'}} = \tau_z(\alpha)$.
Now, for any $u \in \T(\Sigma'', L'')$, let $z'_u \in \T(\Sigma', L'')$ be the point
that projects to $z_\alpha$ in $\T(\alpha)$ and projects 
to $u$ in $\Sigma''$. 
 Call the set of points $z_u'$ which project to $z_\alpha$ the   {\em slice} through $z_\alpha$.  At each point on the slice, $\alpha$ is the shortest curve.   For some $L'''$, we can define  
\[
f_{\Sigma''} \from \T(\Sigma'', L'') \to \T(\Sigma'', L''') 
\]
by
\[
f_{\Sigma''} (u) = f_{\Sigma'}(z_u')_{\Sigma''}.
\]
A similar argument to the one above shows that $f_{\Sigma''}$ is a quasi-isometry. Again 
by \eqnref{eq:shortcurve} both $f$ and $f^{-1}$ preserve the slice
up to bounded error which says the map $f_{\Sigma''}$ is coarsely onto.
The upper and lower bounds in the definition of quasi-isometry go as before. 

We next show that up to bounded additive error for any $z_u'$ in the slice, $\alpha$ 
is the shortest curve on $f(z_u')$. For let  $\beta$ the shortest curve.  Applying   
\eqnref{eq:shortcurve} to $f^{-1}$ we see that 
\[
\tau_{f(z_u')}(\alpha)\leq \tau_{f(z_u')}(\beta)
  \leq \tau_{z_u'}(\beta)+\Distance_{\rm Top}
  \leq \tau_{z_u'}(\alpha)+\Distance_{\rm Top}
  \leq \tau_{f(z_u')}(\alpha)+2\Distance_{\rm Top}.
\]
This says that $|L'''-L''|\leq  2\Distance_{\rm Top}$ and so by introducing a slightly larger 
additive error in the constants of quasi-isometry we can assume the image of our map is 
in $\T(\Sigma'',L'')$.

We now show that $f_{\Sigma''}$ is anchored. 
This is because if $u \in \partial_{L''}(\Sigma'')$, then for every $\beta \in P_u$, 
$\tau_u(\beta) = L''$ and every curve $\beta$ is the shortest curve in $z_u$. 
Then as above, the projection of $f_{\Sigma''}(u)=f_{\Sigma'}(z_u')$ to every $\beta$ 
is also close to the projection of  $z_u'$ to $\beta$ which in turn is the same as the 
projection of $u$ to $\beta$. That is $f_{\Sigma''}(u)$ is close to $u$. 

By induction, (\thmref{Thm:Induction} applied to $\Sigma''$), $f_{\Sigma''}$ is 
$\Distance_{\Sigma''}$--close to the identity. We have shows that the projections of $z$ 
to $\T(W)$, $\T(\alpha)$ and $\T(\Sigma'')$ are close to the projections of $f(z)$ to the 
same. That is, $f(z)$ is close to $z$. But the set of points satisfying \eqnref{Eq:Far-Away} 
is $R$--dense in $\T(\Sigma, L)$. Thus, for an appropriate value of $\Distance_\Sigma$, 
the theorem holds. 
\end{proof}

\begin{proof}[Proof of \thmref{Thm:Main}]
The proof is the same as above. From \propref{Prop:Ivanov} we have that
there is an isometry of $\T(S)$ so that if we precompose $f$ with this isometry, 
then $\fsP$ is the identity. Assuming this is done, \propref{Prop:Thick} implies that 
the restriction of $f$ to $\Tt$ is $\Dt$--close to the identity. 

Now consider a point $z \in \T(S)$ and let $\alpha$ be the shortest curve
in $z$. Choosing $R$ large enough so that statements in \secref{Sec:Short-Curves}
apply and \propref{Prop:Shortest-Identity} apply and so that $R \geq \Length_\Sigma$ 
for any subsurface $\Sigma$ of $S$. If $d_\T(z, \LR) \geq \mu \eta R$ then, 
applying \propref{Prop:Shortest-Identity} we have 
\begin{equation} \label{Eq:alpha-Main}
d_{\T(\alpha)} \big( z_\alpha, f(z)_\alpha \big) \leq \Distance_{\rm Top}. 
\end{equation} 
Let $\Sigma = S - \alpha$, let $L = \tau_z$ and as before, for some $\bar L$,  define a map 
\[
f_\Sigma \from \T(\Sigma, L) \to \T(\Sigma, \bar L)
\]
as follows: for $u \in \T(\Sigma, L)$ let $x \in \T(S)$ be a point so that,
\[
x_\Sigma = u 
\qquad\text{and}\qquad
x_\alpha = z_\alpha.
\]
Now, define
\[
f_\Sigma(u) = f(x)_\Sigma. 
\]
As we argued in the proof of \thmref{Thm:Induction}, this map is a quasi-isometry 
with uniform constants and it is anchored. Hence, by \thmref{Thm:Induction}, we have 
\begin{equation} \label{Eq:Sigma-Main}
d_{\T(\Sigma)}\big( z_\Sigma, f(z)_\Sigma \big) \leq D_\Sigma. 
\end{equation}
The theorem follows from Equations~\eqref{Eq:alpha-Main} and \eqref{Eq:Sigma-Main}. 
\end{proof}
w

\bibliographystyle{alpha}


\end{document}